\documentclass[11pt,reqno]{amsart}
 \usepackage{xcolor}
 \usepackage{amsgen, amstext,amsbsy,amsopn, amsthm, amsfonts,amssymb,amscd,amsmath,euscript,enumerate,url,verbatim,calc,xypic,tikz}
 \usepackage{amsfonts}
\usepackage{amssymb} 
\usepackage{graphicx}
%\beamertemplatenavigationsymbolsempty 
%\usepackage{pgfplots}
%\let\oldemptyset\emptyset
%\let\emptyset\varnothing 
%\usepackage{wasysym}
\usepackage{mathrsfs}  
\usepackage{hyperref}
\usepackage{MnSymbol}
\usepackage{pgfkeys}
\usepackage{mathtools}
\def\multiset#1#2{\ensuremath{\left(\kern-.3em\left(\genfrac{}{}{0pt}{}{#1}{#2}\right)\kern-.3em\right)}}

\DeclarePairedDelimiter\floor{\lfloor}{\rfloor}
\usetikzlibrary{arrows}
\newcommand{\midarrow}{\tikz \draw[-triangle 90] (0,0) -- +(.1,0);}
\oddsidemargin -0.2cm
\evensidemargin -0.2cm
\textwidth 5.8in
\parindent 0in
% \baselinestretch{1.2}

 \usepackage{latexsym}
 \usepackage{graphics}
 \usepackage{color}
\usepackage{lastpage}
\usepackage{fancyhdr}
\usepackage{multirow}
%\usepackage[nodisplayskipstretch]{setspace}
%\setstretch{1.2}
%\usepackage{setspace}
\allowdisplaybreaks
\usepackage{graphicx}
\graphicspath{ {F:/IMAGES/} }

 \newcommand{\F}{\mathcal{F}}

 \newcommand{\Supp}{\operatorname{Supp}}

  \newcommand{\reg}{\operatorname{reg}}

% \newcommand{\red}{\operatorname{red}}

 % For Isomorphism
\parindent 0in
\newcommand{\proset}{\,\mathrel{\lower 4pt\hbox{$\scriptscriptstyle/$}
\mkern -14mu\subseteq }\,} %for proper subset

 \newtheorem{theorem}{Theorem}[section]
 \newtheorem{corollary}[theorem]{Corollary}
 \newtheorem{lemma}[theorem]{Lemma}

 \newtheorem{proposition}[theorem]{Proposition}

 \newtheorem{question}{Question}
\usepackage{amsmath}

 \theoremstyle{definition}
 
 \newtheorem{remark}[theorem]{Remark}
 \newtheorem{definition}[theorem]{Definition}

 \newtheorem{example}[theorem]{Example}
\title[ Regularity  in weighted oriented graphs] { Regularity  in weighted oriented graphs}
\author[ M. Mandal and D.K. Pradhan ]{Mousumi Mandal$^*$ and Dipak Kumar Pradhan}
\address{Department of Mathematics, Indian Institute of Technology Kharagpur, 721302, India} \email{mousumi@maths.iitkgp.ac.in}
\address{Department of Mathematics, Indian Institute of Technology Kharagpur, 721302, India}\email{dipakkumar@iitkgp.ac.in}

%\author{First Author         \and
%	Second Author %etc.  
%}
%\institute{F. Author \at
%	first address \\
%	Tel.: +123-45-678910\\
%	Fax: +123-45-678910\\
%	\email{fauthor@example.com}           %  \\
%	%             \emph{Present address:} of F. Author  %  if needed
%	\and
%	S. Author \at
%	second address
%}

\thanks{$^*$ Supported by SERB(DST) grant No.: $\mbox{EMR}/2016/006997$, India}
%\thanks{$^\dag$ Supported by NBHM grant No.: $2/48(24)/2016/\mbox{NBHM}(\mbox{R.P})/\mbox{R}\&\mbox{D II}/4237$, India}
%\thanks{AMS Classification 2010: 13D02, 13F20, 13C10, 05C22, 05E40, 05C20}
%\address{Department of Mathematics, Indian Institute of Technology Kharagpur, 721302, India} \email{mousumi@maths.iitkgp.ac.in}
%\address{Department of Mathematics, Indian Institute of Technology Kharagpur, 721302, India}\email{dipakkumar@iitkgp.ac.in}
% \address{Department of Mathematics, Indian Institute of Technology Kharagpur, 721302, India} \email{}
%%
%\address{Department of Mathematics, Indian Institute of Technology Kharagpur, 721302, India} \email{nanduri@maths.iitkgp.ac.in}

\begin{document}
\maketitle
 \begin{abstract}
 	Let $D$ be a weighted oriented graph with the underlying graph $G$ and $I(D), I(G) $ be the edge ideals corresponding to $D$ and $G$ respectively. We show that the regularity of edge ideal of a certain class of  weighted oriented graph remains same even after adding certain kind of new edges to it. We also establish the relationship between the regularity of edge ideal of weighted oriented path and cycle 
 	with the regularity of edge ideal of their  underlying graph 
 	when vertices of $V^+$ are sinks.
 \vspace*{0.2cm}\\ 
 	\noindent Keywords: Weighted oriented graph, labeled hypergraph, edge ideal,  Castelnuovo-Mumford
 	regularity.\vspace*{0.2cm}\\  
 \noindent	AMS Classification 2010: 13D02, 13F20, 13C10, 05C22, 05E40, 05C20.

\end{abstract}

 \section{Introuduction} 
A  weighted oriented graph is a triplet $D = (V (D), E(D), w),$ where $V (D)$
is the vertex set, $E(D)$ is the edge set and $w$ is a weight function $ w : V (D) \longrightarrow \mathbb N^{+}$, where $\mathbb N^{+} = \{1, 2,\ldots \}$. Specifically $E(D)$ consists of ordered pairs of the form $(x_i,x_j)$ which represents a directed edge from the vertex $x_i$ to the vertex  $x_j$. %Some times for short we denote the vertex set V (D) and edge set E(D) by V and E respectively. 
The weight of a vertex $x_i \in V(D)$ is $w(x_i)$, denoted by $w_i$ or $w_{x_i}$. We set $V^+(D):= \{x\in V(D)~ |~ w(x) \geq 2  \} $ and it is denoted by $V^+$. The underlying graph of $D$ is the simple graph $G$ whose
vertex set is same as the vertex set of $D$ and whose edge set is $\{\{x, y\}|(x, y) \in E(D)\}$. If $V(D)=\{x_1,\ldots ,x_n\}$ we can regard each vertex $x_i$ as a variable and consider the polynomial ring $R=k[x_1,\ldots ,x_n]$ over a field $k$. The edge ideal of $D$ is defined as 
$$I(D)=(x_ix_j^{w_j}|(x_i,x_j)\in E(D)).$$
If a vertex $x_i$ of $D$ is a source, we shall always assume $w_i = 1$ because in  this case the definition of $I(D)$ does not depend on the weight of
$x_i.$ If $w(x)=1$ for all $x\in V$, then $I(D)$ recovers the usual edge ideal of the underlying graph $G$, which has been extensively studied in the literature in \cite{ali-2015,ali-2018,beyarslan1,bouchat,jacques,kiani}. 
The interest in edge ideals of weighted digraphs comes from coding theory, in the study of Reed-Muller types codes. The edge ideal of  weighted digraph appears as initial ideals of vanishing ideals of projective spaces over finite fields \cite{bernal}.
$\vspace*{0.2cm}$\\
  Algebraic invariants like Cohen-Macaulayness and unmixedness of edge ideals of weighted oriented graphs have been studied in \cite{gimenez,htha-2018,pitones}.
In \cite{pitones}, Pitones et al. have characterised the minimal strong property of $D$ when vertices of $V^+$ are sinks. Recently, the invariants like  Castelnuvo-Mumford regularity and projective dimension of weighted oriented graphs have drawn the attention of many researchers.  In \cite{zhu-2}, Zhu et al. have expressed the projective dimension and regularity of edge ideals of some class of weighted oriented forests or  cycles and in \cite{beyarslan}, Beyarslan et al. gave the formula for projective dimension and regularity of edge ideals of weighted oriented graphs  having the property $P$ defined as follows:\\ 
A weighted oriented graph $D$ is said to have property $P$ if there is at most one edge oriented into each vertex and suppose that for all non-leaf, non-source  vertices, $  x_j , $ either $ w_j \geq 2 $ or the unique edge $ (x_i, x_j ) $ into the vertex $ x_j $ has the property that $ x_i $ is a leaf.
$\vspace*{0.2cm}$\\
In general it is a difficult problem to give a general formula for the regularity of  edge ideal of an arbitrary weighted oriented graph even if the regularity of edge ideal of its  underlying graph is known as the edge ideal changes according to the  orientation of its edges and its weight function. In this paper we study the regularity of weighted oriented graphs arising by adding new edges to the weighted oriented  graphs having property $P$.
 By studying the regularity of edge ideal of  weighted oriented graphs  we partially answer  one question asked by H.T. H{\`a} in \cite{htha}.
 Also we  establish the relation between the regularity of edge ideal of weighted oriented graph $D$ and its underlying graph $G$ when $D$ is a   weighted oriented  path or cycle with vertices of $V^+$ are sinks.
$\vspace*{0.2cm}$\\
This paper is structured as follows. In 
section 2, we recall all the definitions and results that will be required for the rest of the paper. In section 3, we prove that the regularity of  edge   ideal   of one or more weighted oriented graphs with property $P$ remains unchanged even after adding new edges among the connected and disconnected components(Theorem \ref{exteded-s}). As some applications of Theorem \ref{exteded-s},  we compute the regularity of edge ideals of some weighted  oriented graphs whose underlying graphs are dumbbell graph, complete graph, join of two cycles and complete $m-$partite graph.  
In Theorem \ref{exteded-k}, we prove that the regularity of edge ideal of a weighted oriented graph with property $P$ remains same even after adding certain type of   oriented edges from new vertices  towards a single vertex of it. By using Proposition \ref{exteded-s.},  we able to give the combinatorial conditions    for  one question asked by H.T. H{\`a}  in \cite{htha}. 
In section 4, we compute the regularity of edge ideal of a weighted oriented path or cycle  in terms of regularity of edge ideal of their underlying graph  when vertices of $V^+$ are sinks.  
$\vspace*{0.2cm}$
\section{Preliminaries}
In this section we present some of the definitions and results that will be needed  throughout the paper.
 Let $D=(V(D), E(D), w)$ be a weighted oriented graph with underlying  graph $ G = (V(G), E(G)) $. 
For a vertex $ u $ in a graph $ G , $ let $ N_G(u) = \{v \in V(G) ~|~ \{u,v\} \in E(G)\} $ be the set of
neighbours of $ u   $ and set $N_G[u] := N_G(u)\cup \{u\}. $  For a subset $W \subseteq V(G) $ of the vertices in $ G, $ define $ G \setminus  W $ to
be the subgraph of $ G $ with the vertices in $ W $ (and their incident edges) deleted.
 Let $ x $ be a vertex of the weighted oriented graph $ D $, then the sets $ N_D^+ (x) =	\{y : (x, y) \in E(D)\}  $ and  $ N_D^- (x) = \{y : (y, x) \in E(D)\}  $ are called the out-neighbourhood and the in-neighbourhood of $ x $ respectively. Further, $ N_D(x) = N_D^+ (x)\cup N_D^- (x) $   is the set of neighbourhoods of $ x $ and set $N_D[u] := N_D(u)\cup \{u\}.$ 
For $T \subset V ,$ we
define the \textit{ induced  subgraph} $\mathcal{D} = (V( \mathcal{D}), E(\mathcal{D}), w)$ of $D$ on $T$ to be the
weighted oriented graph such that $V (\mathcal{D}) = T$ and for any
$ u, v \in V (\mathcal{D}),  $  $ (u,v) \in E(\mathcal{\mathcal{D}})$  if and only if
$(u,v) \in E(D)$.
Here $ \mathcal{D} = (V (\mathcal{D}), E(\mathcal{D}), w) $
is a weighted oriented graph with the same orientation  as in $D$ and for any $ u  \in V (\mathcal{D}), $
if $ u $ is not a source in $ \mathcal{D}, $ then its weight equals to the weight of $ u $ in $D,$ otherwise, its
weight in $ \mathcal{D} $ is $ 1. $ For a subset $W \subseteq V(D) $ of the vertices in $ D, $ define $ D \setminus  W $ to
	be the  induced subgraph of $ D $ with the vertices in $ W $ (and  their    incident edges) deleted.
For $ Y \subset E(D), $
we define $ D \setminus Y $ to be a   subgraph of $ D $ with all edges in $ Y $ deleted (but its vertices
remained). If $ Y = \{ e \} $ for some $ e \in E(D) ,$  we write $ D \setminus e $ in place of $ D \setminus \{e\}. $   Define $\deg_D(x) = |N_D(x)|$ for  $ x \in V(D) $. A vertex $ x \in V(D) $ is called a leaf vertex if $\deg_D(x)=1$. A vertex $ x \in V(D) $ is called a source vertex if $N_D (x)= N_D^+ (x) .$ A vertex $ x \in V(D) $ is called a sink vertex if $N_D (x)= N_D^- (x) .$ 
$\vspace*{0.2cm}$\\  
Now we give some algebraic definitions and results. Let $k$ be a field and $R=k[x_1,\ldots ,x_n]$ be the polynomial ring in $n$ variables over $k$. Suppose that $M$ is a non zero graded $R$-module with finite graded minimal free resolution $$0 \longrightarrow  \cdots   \longrightarrow \underset{j} \bigoplus  R(-j)^{\beta_{1,j}(M)} \longrightarrow \underset{j} \bigoplus  R(-j)^{\beta_{0,j}(M)}  \longrightarrow    M    \longrightarrow    0$$
 where  $ {\beta_{i,j}}(M) $ denote  the ($i,j$)-th graded Betti number of $ M ,$ is an invariant of $ M$ that equals the number of minimal generators of degree $ j $ in the $ i -$th syzygy module of $ M .$ The invariant which measures the complexity of the module is  Castelnuvo-Mumford regularity denoted by $\reg(M)$ and  defined as
 \begin{equation*}
 \reg(M) := \max \{ j -  i \mid  {\beta_{i,j}}(I)  \neq  0 \} .
 \end{equation*}
 Let $I \subset R$ be a monomial ideal. Then $\mathcal{G}(I)$ denotes the set of  minimal monomial generators of $ I $.
 In general, it is difficult to find the regularity even for monomial ideals. With the help of Betti splitting we can compute this type of invariant for certain class of ideals. The Betti splitting is defined as follows:
 \begin{definition}\label{2.1}  
 	Let $I$ be a monomial ideal and suppose that there exist monomial ideals $J$ and $K$ such that $\mathcal{G}(I)$ is the disjoint union of $\mathcal{G}(J)$   and $\mathcal{G}(K)$. Then
 	$I = J + K$ is a Betti splitting if
 	\begin{equation*}
 	{\beta_{i,j}}(I) =  {\beta_{i,j}}(J) +
 	{\beta_{i,j}}(K) +  {\beta_{i-1,j}}(J\cap K)
 	\end{equation*}
 	for all $i, j  \geq  0,$ 	where $ {\beta_{i-1,j}}(J\cap K) = 0 $ if $ i = 0.$
 \end{definition}
 This formula was first obtained for the total Betti numbers by Eliahou and Kervaire \cite{elia} and extended to the graded case by Fatabbi \cite{fatabbi}. In \cite{francisco}, the authors describe the following sufficient conditions for an ideal $I$ to have a Betti splitting.
 \begin{theorem}\cite[Corollary 2.7]{francisco} \label{betti.1}	Suppose that $I = J + K$ where $\mathcal{G}(J)$ contains
 	all the generators of $I$ divisible by some variable $x_i$ and $\mathcal{G}(K)$ is a nonempty set
 	containing the remaining generators of $I$. If $J$ has a linear resolution, then $I = J+K$
 	is a Betti splitting.
 \end{theorem}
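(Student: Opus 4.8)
The plan is to pass through the standard Mayer--Vietoris criterion for Betti splittings. From the short exact sequence $0 \to J\cap K \to J\oplus K \to I \to 0$ (with $h\mapsto(h,-h)$ on the left) one obtains the long exact sequence in $\Tor_\bullet(-,k)$, and by the result of Eliahou--Kervaire and Fatabbi recalled above, $I = J+K$ is a Betti splitting precisely when the maps $\psi_i \colon \Tor_i(J\cap K,k) \to \Tor_i(J,k)\oplus \Tor_i(K,k)$ induced by the two inclusions vanish for every $i\ge 1$; the case $i=0$ is automatic since $\mathcal{G}(I)=\mathcal{G}(J)\sqcup\mathcal{G}(K)$. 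Thus it suffices to show that both components $\Tor_i(J\cap K,k)\to\Tor_i(J,k)$ and $\Tor_i(J\cap K,k)\to\Tor_i(K,k)$ are zero for all $i\ge1$.

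For the $K$-component I would argue that the inclusion factors through a multiple of $x_i$. Since every generator of $J$ is divisible by $x_i$ we have $J\subseteq (x_i)$, and since no generator of $K$ is divisible by $x_i$ one checks the elementary identity $K\cap (x_i)=x_iK$; hence $J\cap K\subseteq K\cap(x_i)=x_iK$. Now $x_iK\cong K(-1)$, and under this identification the inclusion $x_iK\hookrightarrow K$ is multiplication by $x_i$. As $x_i\in\mathfrak{m}$ and $\mathfrak{m}$ annihilates $\Tor_i(-,k)$, this inclusion induces the zero map on $\Tor_i(-,k)$ for every $i$. The $K$-component of $\psi_i$ therefore factors as $\Tor_i(J\cap K,k)\to\Tor_i(x_iK,k)\to\Tor_i(K,k)$ with the second arrow zero, so the whole component vanishes.

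For the $J$-component I would use the linear resolution hypothesis through a degree count. If $J$ is generated in degree $d$ and has a linear resolution, then $\Tor_i(J,k)_j=0$ unless $j=i+d$. On the other hand, $J\cap K$ is generated by the monomials $\operatorname{lcm}(f,g)$ with $f\in\mathcal{G}(J)$, $g\in\mathcal{G}(K)$; since $f,g$ are distinct minimal generators of $I$, neither divides the other, so $\operatorname{lcm}(f,g)/f$ is a nontrivial monomial and $\deg\operatorname{lcm}(f,g)>\deg f=d$. Thus $J\cap K$ is generated in degrees $\ge d+1$, and minimality of the free resolution forces $\Tor_i(J\cap K,k)_j=0$ for all $j\le i+d$. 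In the only internal degree $j=i+d$ where the target $\Tor_i(J,k)_j$ can be nonzero the source therefore already vanishes, so the $J$-component of $\psi_i$ is zero in every degree. Combining the two paragraphs gives $\psi_i=0$ for all $i\ge1$, and the criterion yields that $I=J+K$ is a Betti splitting.

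The routine points are the identity $K\cap(x_i)=x_iK$ and the standard fact that a module generated in degrees $\ge t$ has $\beta_{i,j}=0$ for $j<i+t$; the step that genuinely uses the hypothesis is the degree comparison in the third paragraph, which is where the linear resolution of $J$ is indispensable, whereas the $K$-side argument works for any variable splitting. The main thing to be careful about is the functoriality underlying the $K$-component --- that the inclusion $x_iK\hookrightarrow K$ really induces multiplication by $x_i$, and hence the zero map, on $\Tor_\bullet(-,k)$ --- together with justifying the strict inequality $\deg\operatorname{lcm}(f,g)>d$ from the minimality of $\mathcal{G}(I)$.
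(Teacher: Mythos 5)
The paper gives no proof of this statement at all --- it is quoted verbatim from Francisco--H\`a--Van Tuyl \cite[Corollary 2.7]{francisco} --- so the only meaningful comparison is with that original source, and your argument is correct and is essentially their proof: the Mayer--Vietoris long exact sequence reduces the claim to the vanishing of the two component maps $\Tor_i(J\cap K,k)\to \Tor_i(J,k)$ and $\Tor_i(J\cap K,k)\to \Tor_i(K,k)$, the $K$-component dies because $J\cap K\subseteq (x_i)\cap K = x_iK$ and the inclusion $x_iK\cong K(-1)\hookrightarrow K$ induces multiplication by $x_i\in\mathfrak{m}$ on $\Tor$, and the $J$-component dies by comparing the generating degrees of $J\cap K$ (all $\geq d+1$, since no minimal generator of $K$ divides one of $J$) with the linear resolution of $J$. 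The one slip is attributional rather than mathematical: the equivalence of ``Betti splitting'' with the vanishing of the maps $\psi_i$ is a result of Francisco--H\`a--Van Tuyl (their Proposition 2.1), not of Eliahou--Kervaire or Fatabbi, who are responsible only for the splitting formula itself; since you use only the implication ``$\psi_i=0$ for all $i$ implies Betti splitting,'' which is immediate from the long exact sequence you wrote down, this does not affect the validity of your proof.
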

 When $I$ is having a Betti splitting, Definition \ref{2.1} implies the following result:
 \begin{corollary}\label{betti.2}
 	If $I = J + K$ is a Betti splitting, then
 	$$ \reg (I)  = \max \{ \reg (J),~ \reg (K),~ \reg (J  \cap  K)  -  1 \} .$$
  \end{corollary}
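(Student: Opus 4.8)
The plan is to read the regularity off directly from the Betti-splitting formula in Definition \ref{2.1}, exploiting the single fact that all graded Betti numbers are non-negative integers, so no cancellation can occur among the three terms. Recall that by definition $\reg(I) = \max\{\, j - i \mid \beta_{i,j}(I) \neq 0 \,\}$, and analogously for $J$, $K$, and $J \cap K$.

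First I would note that, since $\beta_{i,j}(J)$, $\beta_{i,j}(K)$ and $\beta_{i-1,j}(J \cap K)$ are all non-negative, the identity
$$\beta_{i,j}(I) = \beta_{i,j}(J) + \beta_{i,j}(K) + \beta_{i-1,j}(J \cap K)$$
forces $\beta_{i,j}(I) \neq 0$ precisely when at least one of the three summands on the right is nonzero. Consequently the set of pairs $(i,j)$ with $\beta_{i,j}(I) \neq 0$ is exactly the union of the corresponding sets arising from $J$, from $K$, and from the homologically shifted module $J \cap K$.

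Next I would evaluate $\max\{\, j - i \,\}$ over this union by treating each of the three pieces separately. The pieces coming from $\beta_{i,j}(J) \neq 0$ and $\beta_{i,j}(K) \neq 0$ contribute $\reg(J)$ and $\reg(K)$ respectively, straight from the definition of regularity. For the third piece one has $\beta_{i-1,j}(J \cap K) \neq 0$; setting $\ell = i-1$, the quantity $j - i$ rewrites as $j - \ell - 1$, so the maximum of $j - i$ over this piece equals $\max\{\, j - \ell \mid \beta_{\ell,j}(J \cap K) \neq 0 \,\} - 1 = \reg(J \cap K) - 1$. Taking the maximum over all three pieces yields
$$\reg(I) = \max\{\, \reg(J),\ \reg(K),\ \reg(J \cap K) - 1 \,\},$$
as claimed.

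The argument is essentially bookkeeping, and I do not expect a genuine obstacle. The two points that require attention are the homological index shift in the $J \cap K$ term, which is what lowers its apparent regularity by one, and the non-negativity of the Betti numbers, which guarantees that the three contributions cannot cancel and hence that each maximum is actually realized. No input beyond Definition \ref{2.1} is needed.
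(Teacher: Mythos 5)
Your proof is correct and matches the paper's approach: the paper states this corollary as an immediate consequence of Definition \ref{2.1}, and your argument (non-negativity of Betti numbers prevents cancellation, so the nonvanishing set for $I$ is the union of the three pieces, with the homological shift $\ell = i-1$ producing the $-1$ on the $\reg(J\cap K)$ term) is precisely the bookkeeping that implication amounts to. No discrepancy to report.
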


 Let $u \in R$ be a monomial, we set $\Supp(u) = \{x_i : x_i  ~|~ u \}.$ Let $I$ be a monomial ideal,
 $\mathcal{G}(I) = \{u_1,\ldots, u_m\}$ denote the unique minimal set of monomial generators of $I$ and we set $\Supp(I) :=  \displaystyle{\bigcup_{i=1}^m}
 \Supp(u_i).$ The following  lemmas are well known. 
 
  \begin{lemma}\cite[Lemma 3.4]{zhu-1}  \label{reg.2}
 	Let $R_1 = k[x_1,\ldots , x_m]$ and $R_2 = k[x_{m+1}, \ldots, x_n]$ be
 	two polynomial rings,  $ I \subset $ $R_1$ and  $ J \subset $ $R_2$ be two nonzero homogeneous ideals. Then
 	\begin{enumerate}
 		\item[] $ \reg (I + J) = \reg (I) + \reg (J) - 1.$
 	\end{enumerate}
 \end{lemma}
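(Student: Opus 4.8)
The plan is to exploit the fact that $I$ and $J$ involve disjoint sets of variables, so that the quotient by $I+J$ factors as a tensor product over the base field. Writing $R = R_1 \otimes_k R_2 = k[x_1,\ldots,x_n]$, I would first record the graded isomorphism
$$R/(I+J) \iso (R_1/I) \otimes_k (R_2/J).$$

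Next, I would assemble a minimal graded free resolution of $R/(I+J)$ from those of the two factors. Let $F_\bullet \to R_1/I$ and $G_\bullet \to R_2/J$ be minimal graded free resolutions over $R_1$ and $R_2$ respectively. Since $k$ is a field, $\Tor_i^k(R_1/I, R_2/J)=0$ for $i>0$, so the Künneth formula shows that the tensor complex $F_\bullet \otimes_k G_\bullet$ is a graded free resolution of $R/(I+J)$ over $R$. The crucial point is that this resolution is again minimal: every differential of $F_\bullet \otimes_k G_\bullet$ has entries lying in $(x_1,\ldots,x_m)+(x_{m+1},\ldots,x_n)=\mathfrak{m}$, because the differentials of $F_\bullet$ and $G_\bullet$ do. Consequently the graded Betti numbers multiply:
$$\beta_{i,j}^R\big(R/(I+J)\big) = \sum_{\substack{p+q=i\\ a+b=j}} \beta_{p,a}^{R_1}(R_1/I)\,\beta_{q,b}^{R_2}(R_2/J).$$

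From here the regularity is a short extraction from the definition. A pair $(i,j)$ contributes to $R/(I+J)$ precisely when there are contributing pairs $(p,a)$ for $R_1/I$ and $(q,b)$ for $R_2/J$ with $i=p+q$ and $j=a+b$; since $j-i=(a-p)+(b-q)$ and the two summands range independently, maximizing gives
$$\reg\big(R/(I+J)\big) = \reg(R_1/I) + \reg(R_2/J).$$
Finally, using the standard relation $\reg(\mathfrak{a})=\reg(R/\mathfrak{a})+1$, which comes from the short exact sequence $0 \to \mathfrak{a} \to R \to R/\mathfrak{a} \to 0$ together with $\reg(R)=0$, applied in turn to $I+J$, $I$, and $J$, I would conclude
$$\reg(I+J) = \reg\big(R/(I+J)\big) + 1 = (\reg(I)-1)+(\reg(J)-1)+1 = \reg(I)+\reg(J)-1.$$

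The main obstacle is the minimality claim for $F_\bullet \otimes_k G_\bullet$: one must verify that tensoring over the field $k$ preserves both acyclicity (via Künneth) and the property that all differential entries lie in $\mathfrak{m}$, so that no cancellation occurs and the product formula for the Betti numbers holds exactly. Once minimality is secured, the passage to regularity is purely combinatorial, amounting to the observation that the maximum of $j-i$ over the support splits as a sum over the two factors.
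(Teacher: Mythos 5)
Your proof is correct, but there is nothing in the paper to compare it against: the paper does not prove this lemma, it simply quotes it from \cite{zhu-1} as a known preliminary fact. Your argument---identifying $R/(I+J) \iso (R_1/I)\otimes_k (R_2/J)$, resolving it by the tensor product of the two minimal graded free resolutions (acyclic by K\"unneth over the field $k$, minimal because all differential entries of $F_\bullet\otimes_k G_\bullet$ still lie in the irrelevant maximal ideal), extracting the product formula for graded Betti numbers, and then shifting by $1$ to pass between $\reg$ of an ideal and $\reg$ of its quotient---is the standard proof of this statement and is essentially how it is established in the literature the paper relies on (compare also \cite{nv-838}). The only points worth making explicit are that the shift $\reg(\mathfrak{a})=\reg(R/\mathfrak{a})+1$ requires $\mathfrak{a}$ to be a nonzero proper homogeneous ideal (implicit in the lemma's hypotheses), and that $\reg_{R_1}(I)=\reg_R(IR)$, which follows from the same minimality observation since extending a minimal $R_1$-resolution along the free extension $R_1\to R$ keeps the differential entries inside the maximal ideal.
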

 
 \begin{lemma}\cite[Lemma 2.3]{nv-838}\label{reg.3}
 	Let $I, J $ be two monomial ideals such that $\Supp(I) \cap  \Supp(J) = \phi .$ Then $ \reg (IJ) = \reg (I) + \reg (J).$

 \end{lemma}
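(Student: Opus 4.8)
The plan is to exploit the disjoint variable structure to realize $IJ$ as an external tensor product and then to build its minimal free resolution out of those of $I$ and $J$. After renaming variables I may assume $I \subset R_1 = k[x_1,\ldots,x_m]$ and $J \subset R_2 = k[x_{m+1},\ldots,x_n]$, so that $R = R_1 \otimes_k R_2$. My first step is to show that the multiplication map $I \otimes_k J \to IJ$, $u \otimes v \mapsto uv$, is an isomorphism of graded $R$-modules. It is clearly a surjective $R$-homomorphism, and it is injective because a $k$-basis of $I \otimes_k J$ is given by the pure tensors $m \otimes n$ with $m \in I$, $n \in J$ monomials, and since $\Supp(I) \cap \Supp(J) = \emptyset$ distinct pairs $(m,n)$ produce distinct monomials $mn$. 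Hence $IJ \cong I \otimes_k J$ as graded $R$-modules.

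Next I would pass to resolutions. Let $F_\bullet$ and $G_\bullet$ be the minimal graded free resolutions of $I$ over $R_1$ and of $J$ over $R_2$. Every term of the total complex $F_\bullet \otimes_k G_\bullet$ is free over $R$, since $R_1(-a) \otimes_k R_2(-b) = R(-a-b)$. Because $k$ is a field, the K\"unneth formula carries no Tor correction terms, so $H_i(F_\bullet \otimes_k G_\bullet) = \bigoplus_{p+q=i} H_p(F_\bullet) \otimes_k H_q(G_\bullet)$; as $F_\bullet$ and $G_\bullet$ are resolutions this vanishes for $i>0$ and equals $I \otimes_k J \cong IJ$ for $i=0$. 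Thus $F_\bullet \otimes_k G_\bullet$ is a free resolution of $IJ$ over $R$, and it is moreover minimal: its differential is $d_F \otimes 1 \pm 1 \otimes d_G$, whose entries, by minimality of $F_\bullet$ and $G_\bullet$, lie in $(x_1,\ldots,x_m) + (x_{m+1},\ldots,x_n) = \m$. Consequently the graded Betti numbers multiply, $\beta_{i,j}^R(IJ) = \sum_{p+q=i,\ a+b=j} \beta_{p,a}^{R_1}(I)\,\beta_{q,b}^{R_2}(J)$, and a short bookkeeping with the definition of regularity finishes the job: a summand is nonzero exactly when $\beta_{p,a}^{R_1}(I)\neq 0$ and $\beta_{q,b}^{R_2}(J)\neq 0$, in which case $j-i = (a-p)+(b-q)$, so maximizing the two contributions independently yields $\reg(IJ) = \reg(I)+\reg(J)$.

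I expect the only real subtlety to be the verification that $F_\bullet \otimes_k G_\bullet$ is simultaneously exact and minimal, that is, that the external tensor product of minimal resolutions over disjoint variable sets is again a minimal resolution. Both properties rest on the hypothesis $\Supp(I) \cap \Supp(J) = \emptyset$: exactness via the vanishing of the K\"unneth Tor terms over the field $k$, and minimality via the fact that the two families of differential entries involve complementary, disjoint variables. An alternative route sidesteps resolutions: for monomial ideals with disjoint supports one checks directly that $IJ = I \cap J$, and then the short exact sequence $0 \to I \cap J \to I \oplus J \to I+J \to 0$, combined with Lemma \ref{reg.2} (which gives $\reg(I+J)=\reg(I)+\reg(J)-1$) and the standard regularity estimates for short exact sequences, pins down $\reg(IJ)$ — although this variant requires a small separate argument in the borderline case where one of $I$, $J$ has a linear resolution, which is precisely why I prefer the tensor-product computation as the clean uniform proof.
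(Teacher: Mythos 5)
The paper itself contains no proof of this lemma: it is quoted as a well-known result straight from \cite{nv-838}, so the only meaningful comparison is with that cited source. Your argument --- using $\Supp(I)\cap\Supp(J)=\emptyset$ to identify $IJ\cong I\otimes_k J$, tensoring the two minimal free resolutions, and invoking the K\"unneth formula over $k$ together with minimality of the product differential to get $\beta^R_{i,j}(IJ)=\sum_{p+q=i,\,a+b=j}\beta_{p,a}(I)\,\beta_{q,b}(J)$, hence $\reg(IJ)=\reg(I)+\reg(J)$ --- is correct and is essentially the standard proof given in that reference, so nothing further is needed.
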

 
 \begin{lemma}\cite[ Lemma 1.2]{nv-838}  \label{reg.4}  Let  $0  \rightarrow A \rightarrow B \rightarrow C \rightarrow 0$  be 	short exact sequence of finitely generated graded $R$-modules. Then
 		\item $\reg(B)$ $\leq \max \{\reg (A),~ \reg (C)\}$ and the equality holds if $\reg(A)$ $- 1  \neq$ $\reg(C).$ 
% 		\item[(b)]  $\reg(A)$ $\leq \max \{\reg (B), ~  \reg (C) + 1\}.$  
 	
 \end{lemma}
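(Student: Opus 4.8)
The plan is to work with the characterization of regularity in terms of graded Betti numbers, namely $\reg(M) = \max\{j - i \mid \beta_{i,j}(M) \neq 0\}$, recalling that $\beta_{i,j}(M) = \dim_k \Tor_i(M,k)_j$. Applying the functor $- \otimes_R k$ to the short exact sequence $0 \to A \to B \to C \to 0$ yields a long exact sequence of Tor modules whose connecting homomorphisms are degree-preserving, so that for each fixed internal degree $j$ one obtains an exact sequence of $k$-vector spaces
\begin{equation*}
\cdots \to \Tor_{i+1}(C,k)_j \to \Tor_i(A,k)_j \to \Tor_i(B,k)_j \to \Tor_i(C,k)_j \to \Tor_{i-1}(A,k)_j \to \cdots
\end{equation*}
for all $i$. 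Every argument below reads off information from three consecutive terms of this sequence.

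For the inequality I would argue as follows. Suppose $\beta_{i,j}(B) \neq 0$, that is, $\Tor_i(B,k)_j \neq 0$. By exactness of the piece $\Tor_i(A,k)_j \to \Tor_i(B,k)_j \to \Tor_i(C,k)_j$, at least one of $\Tor_i(A,k)_j$, $\Tor_i(C,k)_j$ is nonzero, whence $j - i \leq \reg(A)$ or $j - i \leq \reg(C)$. Taking the maximum over all such pairs $(i,j)$ gives $\reg(B) \leq \max\{\reg(A), \reg(C)\}$.

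For the equality, write $a = \reg(A)$, $c = \reg(C)$, $b = \reg(B)$, assume $a - 1 \neq c$, and show that $b < \max\{a,c\}$ is impossible. I would split into two cases. If $c \geq a$, choose $(i,j)$ with $j - i = c$ and $\Tor_i(C,k)_j \neq 0$; assuming $b < c$ forces $\Tor_i(B,k)_j = 0$, so in $\Tor_i(B,k)_j \to \Tor_i(C,k)_j \to \Tor_{i-1}(A,k)_j$ the second map is injective, giving $\Tor_{i-1}(A,k)_j \neq 0$ and hence $c + 1 = j - (i-1) \leq a$, contradicting $c \geq a$; thus $b = c = \max\{a,c\}$. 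If instead $a > c$, choose $(i,j)$ with $j - i = a$ and $\Tor_i(A,k)_j \neq 0$; assuming $b < a$ forces $\Tor_i(B,k)_j = 0$, so in $\Tor_{i+1}(C,k)_j \to \Tor_i(A,k)_j \to \Tor_i(B,k)_j$ the first map is surjective, giving $\Tor_{i+1}(C,k)_j \neq 0$ and hence $a - 1 = j - (i+1) \leq c$; combined with $a > c$ this forces $c = a - 1$, contradicting the hypothesis.

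The inequality is routine; the real content is the equality clause. The main point to get right is the recognition that the only mechanism by which $\reg(B)$ can fall strictly below the maximum is a cancellation across the connecting map of the long exact sequence, and that this cancellation can only occur when the top degree of $A$ sits exactly one step above that of $C$. Organizing the case analysis around whether $a \leq c$ or $a > c$, and in each case probing the exact sequence precisely at an internal degree realizing the relevant regularity, is what makes the hypothesis $\reg(A) - 1 \neq \reg(C)$ exactly the obstruction that rules out this drop.
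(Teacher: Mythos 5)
Your proof is correct. Note that the paper itself gives no argument for this lemma: it is quoted verbatim from the cited reference [H\`a--Trung--Trung, Lemma 1.2], where it is recorded as a standard fact (classically proved via the long exact sequence of local cohomology and the characterization $\reg(M)=\max\{i+j : H^i_{\mathfrak{m}}(M)_j\neq 0\}$). So the comparison is with that standard route rather than with anything in the paper. Your argument instead runs entirely through graded Betti numbers, $\beta_{i,j}(M)=\dim_k \Tor_i(M,k)_j$, and the degree-preserving long exact sequence of $\Tor$; this is a legitimate, self-contained proof, and it has the advantage of matching exactly the definition of regularity the paper actually adopts in its preliminaries (the Betti-number formula), so no translation between characterizations of regularity is needed. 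The details check out: exactness at $\Tor_i(B,k)_j$ gives the inequality; in the case $c=\reg(C)\geq \reg(A)=a$ your injectivity argument produces $\Tor_{i-1}(A,k)_j\neq 0$ and the contradiction $c+1\leq a\leq c$ without any hypothesis (consistent with the fact that $c\geq a$ automatically implies $c\neq a-1$); and in the case $a>c$ your surjectivity argument is precisely where the hypothesis $a-1\neq c$ is consumed. One cosmetic remark: it would be worth stating explicitly that the connecting maps preserve internal degree because the maps in the short exact sequence have degree zero and $k$ is concentrated in degree zero, which is what licenses restricting the long exact sequence to a fixed degree $j$; you assert this, and it is true, but it is the one point a referee would ask you to justify.
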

\begin{lemma}  \cite[Lemma 3.1]{htha}\label{H<G}
	Let $G = (V, E)$ be a simple graph. If $ G^\prime $ is an induced subgraph of $ G $, then $ \reg(I(G^\prime)) \leq  \reg(I(G)). $	
\end{lemma}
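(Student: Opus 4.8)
The plan is to reduce the statement to the deletion of a single vertex and then to invoke the Betti-splitting machinery recorded in Theorem \ref{betti.1} and Corollary \ref{betti.2}. First I would observe that any induced subgraph $G'$ on a vertex set $W \subseteq V$ is obtained from $G$ by deleting the vertices of $V \setminus W$ one at a time, each intermediate graph again being an induced subgraph of its predecessor. Since the regularity of a monomial ideal is unaffected by adjoining variables that do not occur in its support, I may regard every $I(G_i)$ in this deletion chain as an ideal of the fixed ring $R = k[V]$. It therefore suffices to prove the single-vertex inequality $\reg(I(G \setminus v)) \le \reg(I(G))$ for an arbitrary vertex $v$ and then to compose these inequalities along the chain.

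For the single-vertex step, let $N_G(v) = \{u_1, \dots, u_d\}$ and write $I(G) = J + K$, where $J = v\cdot(u_1, \dots, u_d) = (vu_1, \dots, vu_d)$ collects exactly the generators of $I(G)$ divisible by $v$, and $K = I(G \setminus v)$ consists of the remaining generators (the edges of $G$ not meeting $v$). Multiplication by the nonzerodivisor $v$ exhibits $J$ as a degree-one shift of the ideal $(u_1, \dots, u_d)$ generated by variables; the latter has a linear (Koszul) resolution, so $J$ has a linear resolution as well. Hence Theorem \ref{betti.1} applies and $I(G) = J + K$ is a Betti splitting, so Corollary \ref{betti.2} yields
$$\reg(I(G)) = \max\{\reg(J),\ \reg(K),\ \reg(J \cap K) - 1\} \ge \reg(K) = \reg(I(G \setminus v)),$$
which is precisely the desired inequality.

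The only point requiring genuine care is the treatment of degenerate configurations in which the hypotheses of Theorem \ref{betti.1} fail. If $v$ is isolated in $G$, then no generator of $I(G)$ involves $v$, so $I(G) = I(G \setminus v)$ as ideals of $R$ and the two regularities coincide. If $v$ has neighbours but $G \setminus v$ has no edges, then $K = (0)$ and the splitting is vacuous, yet $\reg(I(G \setminus v)) \le \reg(I(G))$ holds trivially; in every remaining case both $J$ and $K$ are nonzero and the argument above goes through verbatim. I expect this boundary bookkeeping, rather than the central Betti-splitting computation, to be the only real obstacle.

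As a remark, a self-contained alternative avoids Betti splittings altogether: writing $I(G) = I_\Delta$ for the Stanley–Reisner ideal of the independence complex $\Delta$ of $G$, the independence complex of an induced subgraph is exactly the restriction of $\Delta$ to the corresponding vertex subset. Hochster's formula then expresses the relevant multigraded Betti numbers through reduced simplicial homology of these restrictions, so every nonvanishing Betti number of $I(G')$ already appears among those of $I(G)$, giving the inequality directly. I would use the Betti-splitting route as the primary proof since it relies only on the results already collected in the excerpt.
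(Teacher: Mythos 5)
Your proposal is correct, but note that the paper never proves Lemma \ref{H<G}: it is quoted directly from \cite[Lemma 3.1]{htha}, so there is no internal proof to compare against, and the proof in that cited source is essentially the one you relegate to your closing remark. There, $I(G')$ is viewed as the Stanley--Reisner ideal of the restriction of the independence complex of $G$ to $V(G')$, and Hochster's formula shows that every (multi)graded Betti number of $I(G')$ is bounded above by the corresponding Betti number of $I(G)$, which gives monotonicity not only of regularity but also of projective dimension and of each individual $\beta_{i,j}$. Your primary route is genuinely different: peeling off one vertex $v$ at a time, writing $I(G)=J+K$ with $J=(vu_1,\dots,vu_d)$ and $K=I(G\setminus v)$, noting that $J\cong (u_1,\dots,u_d)(-1)$ has a linear (Koszul) resolution so that Theorem \ref{betti.1} makes this a Betti splitting, and reading off $\reg(I(G))\geq \reg(K)=\reg(I(G\setminus v))$ from Corollary \ref{betti.2}; composing along a chain of single-vertex deletions (legitimate, since Betti numbers of a monomial ideal are unchanged under adjoining unused variables) finishes the argument. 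This is sound, including your treatment of the degenerate cases --- isolated $v$, and $G\setminus v$ edgeless, where Theorem \ref{betti.1} needs $\mathcal{G}(K)\neq\emptyset$; the only convention worth making explicit is the value of $\reg$ of the zero ideal when $G'$ itself has no edges, in which case the claim is vacuous. In short: the Hochster-formula route (the one actually behind the citation) buys the stronger statement about all Betti numbers, while your Betti-splitting route buys a proof that is self-contained relative to the tools this paper already records from \cite{francisco}.
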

The following two corollaries are based on the regularity of edge ideal in path and cycle.   
\begin{corollary}\cite[Theorem 4.7]{beyarslan1}\label{path}
	Let $ G $ be a path of length $ n $ denoted as $ P_n .$ Then
	\item [(a)] $\displaystyle {\reg(I(P_n))= \floor*{\frac{n+2}{3}} + 1 } ,$  
	\item [(b)]  $\reg(I(P_n))=  \reg(I(P_{n-3})) + 1$ for $ n\geq 4. $  	
\end{corollary}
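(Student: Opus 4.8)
The plan is to establish part (a) by strong induction on $n$ and then read off part (b) as a purely numerical consequence. Throughout I write $P_n$ on the vertices $x_1,\dots,x_{n+1}$ with edges $x_ix_{i+1}$, so that $I(P_n)=(x_1x_2,\dots,x_nx_{n+1})$. The base cases $n=1,2,3$ are handled directly: $I(P_1)=(x_1x_2)$ is principal of degree $2$, while $I(P_2)=x_2(x_1,x_3)$ and $I(P_3)$ can be evaluated by the same splitting technique used below, and in each case one checks $\reg(I(P_n))=\floor*{\frac{n+2}{3}}+1=2$.

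For the inductive step, fix $n\ge 4$ and split off the leaf $x_{n+1}$. The only minimal generator divisible by $x_{n+1}$ is $x_nx_{n+1}$, so I set $J=(x_nx_{n+1})$ and $K=I(P_{n-1})$ on $x_1,\dots,x_n$, giving $\mathcal{G}(I(P_n))=\mathcal{G}(J)\sqcup\mathcal{G}(K)$. Since $J$ is principal it has a linear resolution, so Theorem \ref{betti.1} shows that $I(P_n)=J+K$ is a Betti splitting, and Corollary \ref{betti.2} yields
\[
\reg(I(P_n))=\max\{\reg(J),\ \reg(K),\ \reg(J\cap K)-1\}.
\]
Here $\reg(J)=2$ and $\reg(K)=\reg(I(P_{n-1}))$, so the whole argument reduces to understanding $J\cap K$.

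The key computation is the intersection. Using $J\cap K=x_nx_{n+1}\cdot(I(P_{n-1}):x_n)$ and colonizing, I find $(I(P_{n-1}):x_n)=(x_{n-1})+I(P_{n-3})$, where $I(P_{n-3})=(x_1x_2,\dots,x_{n-3}x_{n-2})$ lives on $x_1,\dots,x_{n-2}$ and the redundant generator $x_{n-2}x_{n-1}$ is absorbed into $(x_{n-1})$. The three supports $\{x_n,x_{n+1}\}$, $\{x_{n-1}\}$, and $\{x_1,\dots,x_{n-2}\}$ are pairwise disjoint, so Lemma \ref{reg.3} gives $\reg(J\cap K)=2+\reg\big((x_{n-1})+I(P_{n-3})\big)$, and Lemma \ref{reg.2} gives $\reg\big((x_{n-1})+I(P_{n-3})\big)=1+\reg(I(P_{n-3}))-1=\reg(I(P_{n-3}))$. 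Hence $\reg(J\cap K)-1=\reg(I(P_{n-3}))+1$.

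Plugging back, $\reg(I(P_n))=\max\{2,\ \reg(I(P_{n-1})),\ \reg(I(P_{n-3}))+1\}$. Invoking the induction hypothesis for the two smaller paths together with the elementary floor identity $\floor*{\frac{n-1}{3}}+1=\floor*{\frac{n+2}{3}}$, one checks that $\reg(I(P_{n-1}))=\floor*{\frac{n+1}{3}}+1\le \reg(I(P_{n-3}))+1$ and that the third term is at least $2$, so the maximum equals $\reg(I(P_{n-3}))+1=\floor*{\frac{n+2}{3}}+1$. This proves (a), and comparing $\floor*{\frac{n+2}{3}}+1$ with $\reg(I(P_{n-3}))+1$ immediately yields the recursion (b). I expect the main obstacle to be the intersection step: correctly identifying $(I(P_{n-1}):x_n)$, discarding the redundant generator, and verifying that the supports decompose cleanly enough that Lemmas \ref{reg.2} and \ref{reg.3} apply; the subsequent maximum is then only a routine floor-function comparison.
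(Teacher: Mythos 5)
Your proof is correct, and every step checks out: the splitting $I(P_n)=J+K$ with $J=(x_nx_{n+1})$ is a Betti splitting by Theorem \ref{betti.1}, since $J$ is principal (hence has a linear resolution) and contains the unique generator divisible by $x_{n+1}$; the identity $J\cap K=x_nx_{n+1}\,(I(P_{n-1}):x_n)$ and the computation $(I(P_{n-1}):x_n)=(x_{n-1})+I(P_{n-3})$ are right (the generator $x_{n-2}x_{n-1}$ is indeed absorbed into $(x_{n-1})$); the disjoint-support hypotheses of Lemmas \ref{reg.3} and \ref{reg.2} hold for the factors and summands you use; and the final maximum is settled by the valid identity $\floor*{\frac{n-1}{3}}+1=\floor*{\frac{n+2}{3}}$. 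Be aware, however, that the paper itself offers no proof of this statement: Corollary \ref{path} is imported from the literature, where it is deduced from a considerably more general theorem that computes $\reg(I(G)^s)$ for every forest $G$ and every power $s\geq 1$ in terms of the induced matching number. So your route is genuinely different from the cited source's: theirs is combinatorial and yields the formula for all forests and all powers at once, while yours is an elementary, self-contained induction using only the tools stated in Section 2 of this paper. Interestingly, your argument is essentially the same technique this paper deploys for its own new results --- compare Theorems \ref{ClassII} and \ref{ClassI}, whose inductive steps likewise split off the last edge as $J$, write $J\cap K=JL$, and apply Lemmas \ref{reg.2} and \ref{reg.3} before taking the maximum --- so your proposal in effect shows that the quoted path formula can be recovered with the paper's own machinery rather than cited.
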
  

\begin{corollary}\cite[Theorem 4.7, Theorem 5.2]{beyarslan1}\label{cycle}
	Let $ G $ be a cycle of length $ n $ denoted as $ C_n .$ Then
   \item [(a)] if $ n\equiv 0,1~(\mbox{mod}~3),$    then $\reg(I(G))=\reg(I(G  \setminus  \{x\}))= \reg(I(G \setminus N[x])) + 1 $ except $ n = 3,4$ and  $\reg(I(G))=\reg(I(G  \setminus  \{x\}))=2$ for $n=3,4$. 
	\item [(b)] if $ n\equiv 2~(\mbox{mod}~3),$   then $\reg(I(G))= \reg(I(G  \setminus \{x\})) + 1= \reg(I(G \setminus N[x])) + 1.$
	 
\end{corollary}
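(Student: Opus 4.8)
The plan is to fix a vertex $x \in V(C_n)$ and exploit that deleting it leaves a path: concretely $C_n \setminus \{x\}$ is the path on $n-1$ vertices, i.e. $P_{n-2}$ in the length notation of Corollary \ref{path}, while $C_n \setminus N[x]$ is the path on $n-3$ vertices, i.e. $P_{n-4}$. First I would record, straight from the closed formula $\reg(I(P_m)) = \floor{(m+2)/3} + 1$ of Corollary \ref{path}(a), the two elementary floor comparisons
$$\reg(I(C_n\setminus\{x\})) = \reg(I(C_n\setminus N[x])) + 1 \ \ (n \equiv 0,1), \qquad \reg(I(C_n\setminus\{x\})) = \reg(I(C_n\setminus N[x])) \ \ (n \equiv 2),$$
taken modulo $3$. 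These already identify the second and third expressions in each part of the statement, so all that remains is to pin down $\reg(I(C_n))$ itself. The two genuinely small cases $n = 3, 4$ (the triangle and $C_4$) I would dispose of by hand, both edge ideals having regularity $2$.

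For the upper bound I would use the short exact sequence attached to $x$,
$$0 \to (R/(I(C_n):x))(-1) \xrightarrow{\ \cdot x\ } R/I(C_n) \to R/(I(C_n),x) \to 0,$$
together with Lemma \ref{reg.4}. Since $(I(C_n),x)$ recovers $I(C_n\setminus\{x\})$ after killing $x$, and $(I(C_n):x)$ recovers $I(C_n\setminus N[x])$ after killing $x$ and its two neighbours, this gives $\reg(I(C_n)) \le \max\{\reg(I(C_n\setminus\{x\})),\ \reg(I(C_n\setminus N[x])) + 1\}$. In case (a) the two arguments of the maximum coincide by the comparison above, so the upper bound equals $\reg(I(C_n\setminus\{x\}))$; the matching lower bound is free from Lemma \ref{H<G}, as $C_n\setminus\{x\}$ is an induced subgraph of $C_n$. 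This closes case (a).

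Case (b), with $n \equiv 2 \pmod 3$, is where the difficulty sits. Here $\reg(I(C_n\setminus\{x\})) = \reg(I(C_n\setminus N[x]))$, so the hypothesis $\reg(A) - 1 \ne \reg(C)$ of Lemma \ref{reg.4} fails and the sequence yields only $\reg(I(C_n)) \le \reg(I(C_n\setminus N[x])) + 1$. The monotonicity of Lemma \ref{H<G} is of no help either: every proper induced subgraph of $C_n$ is a disjoint union of paths, whose regularity (read off through Lemmas \ref{reg.2} and \ref{reg.3}) never exceeds $\reg(I(P_{n-2})) = \reg(I(C_n\setminus\{x\}))$, which is one short of the target. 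To produce the missing lower bound I would pass to a Betti splitting: writing $x = x_n$ with neighbours $x_1, x_{n-1}$, set $J = (x_1x_n, x_{n-1}x_n) = x_n(x_1, x_{n-1})$ and let $K = I(P_{n-2})$ collect the remaining generators. Since $J$ has a linear resolution, Theorem \ref{betti.1} makes $I(C_n) = J + K$ a Betti splitting, and Corollary \ref{betti.2} gives the exact value $\reg(I(C_n)) = \max\{\reg(J),\ \reg(K),\ \reg(J\cap K) - 1\}$.

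The whole of case (b) thus reduces to computing $\reg(J\cap K)$, and this is the step I expect to be the main obstacle. Every generator of $J\cap K$ is divisible by $x_n$, so $J\cap K = x_n\cdot L$ with $L = (x_1, x_{n-1}) \cap I(P_{n-2})$ and $\reg(J\cap K) = \reg(L) + 1$; because intersection distributes over sums of monomial ideals, $L$ splits as $\big((x_1)\cap I(P_{n-2})\big) + \big((x_{n-1})\cap I(P_{n-2})\big)$, each summand being a variable times a colon ideal of a shorter path. I would evaluate $\reg(L)$ by induction on $n$ within the class $n \equiv 2 \pmod 3$ — a further Betti splitting of $L$, or a second short exact sequence, reduces it to path and cycle regularities already in hand — with base case $C_5$ checked directly; the outcome is $\reg(L) = \reg(I(P_{n-2})) + 1$, so that $\reg(J\cap K) - 1$ dominates and forces $\reg(I(C_n)) = \reg(I(C_n\setminus\{x\})) + 1 = \reg(I(C_n\setminus N[x])) + 1$. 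Verifying that this top term is genuinely present — equivalently, that the relevant extremal Betti number does not cancel — is precisely the crux that separates the $n \equiv 2$ case from the others.
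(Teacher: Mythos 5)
This corollary is quoted by the paper from \cite{beyarslan1} without proof, so there is no internal argument to compare against; judged on its own merits, your proposal is correct and complete for part (a) and for the reduction step of part (b), but it has a genuine gap exactly at the point you yourself call the crux. To be precise about what is fine: the identities $\reg(I(P_{n-2}))=\reg(I(P_{n-4}))+1$ for $n\equiv 0,1$ and $\reg(I(P_{n-2}))=\reg(I(P_{n-4}))$ for $n\equiv 2$ follow from Corollary \ref{path}; the short exact sequence together with Lemma \ref{reg.4} gives the upper bound, Lemma \ref{H<G} the lower bound, and part (a) closes; in part (b), $J=x_n(x_1,x_{n-1})$ has a linear resolution, so Theorem \ref{betti.1} applies, and your factorization $J\cap K=x_nL$ with $L=(x_1,x_{n-1})\cap I(P_{n-2})$ is valid (the distributive law you invoke does hold for monomial ideals). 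Note also that within a Betti splitting there is no cancellation issue: Corollary \ref{betti.2} is an exact max formula, so all that is needed is a lower bound for $\reg(J\cap K)$.

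That lower bound, i.e.\ $\reg(L)=\reg(I(P_{n-2}))+1$, is the entire content of part (b), and it is asserted rather than proven; moreover, the two methods you name for it demonstrably stall. Write $n=3k+2$, so the target is $\reg(L)=k+2$ while $\reg(I(P_{n-2}))=k+1$. For the ``second short exact sequence'': one has $L:x_1=K:x_1=(x_2)+I(x_3\cdots x_{n-1})$ and $(L,x_1)=(x_1)+x_{n-1}\bigl((x_{n-2})+I(x_2\cdots x_{n-3})\bigr)$, and Lemmas \ref{reg.2}, \ref{reg.3} and Corollary \ref{path} give $\reg(L:x_1)=k+1$ and $\reg((L,x_1))=k+1$. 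Hence in the sequence for $L$ at $x_1$ the hypothesis $\reg(A)-1\neq\reg(C)$ of Lemma \ref{reg.4} fails -- exactly the same degeneracy that blocked the sequence for $I(C_n)$ -- and one obtains only $\reg(L)\leq k+2$, an upper bound, not the needed lower bound. For the ``further Betti splitting'': for $n\geq 8$ the minimal generators of $L$ divisible by $x_1$ (or by $x_2$, $x_{n-2}$, $x_{n-1}$) are $x_1x_2$ together with the cubics $x_1x_ix_{i+1}$, $3\leq i\leq n-3$, which have mixed degrees $2$ and $3$, so that subideal cannot have a linear resolution and Theorem \ref{betti.1} does not apply; splitting at a middle variable such as $x_3$ does satisfy the linear-resolution hypothesis, but the complementary ideal and the new intersection are no longer edge ideals of paths or cycles, so the recursion does not close inside the class whose regularities are ``already in hand.'' The missing ingredient is a genuine nonvanishing statement: in the literature this case rests on the explicit graded Betti numbers of cycles computed via Hochster's formula (Jacques \cite{jacques}, on which \cite{beyarslan1} relies), equivalently on the nonvanishing top homology of the independence complex of $C_n$ when $n\equiv 2\ (\mbox{mod}\ 3)$. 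Nothing in your proposal certifies this nonvanishing, and flagging the step as the crux does not discharge it.
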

In order to deal with non square-free monomial ideals, polarization is proved to be a powerful
process to obtain a square-free monomial ideal from a given monomial ideal.
\begin{definition}
	Suppose that  $u={x_1}^{a_1}$ $\cdots$ $ {x_n}^{a_n} $ is a monomial in $R$. Then we define
	the polarization of $u$ to be the square-free monomial
	\begin{equation*}
	\mathcal{P} (u)  =  x_{11}x_{12} \cdots   x_{1a_1}x_{21}x_{22} \cdots x_{2a_2} \cdots  x_{n1}x_{n2} \cdots x_{na_n}
	\end{equation*}
	in the polynomial ring $R^\mathcal{P}=k[x_{ij}  \mid  1  \leq  i  \leq  n, 1  \leq  j  \leq  a_i ]$ . If  $ I \subset  R $ is a monomial
	ideal with $\mathcal{G}(I)$ = $ \{ u_1,\ldots, u_m   \}$, the polarization of $ I $, denoted by $I^\mathcal{P}$ is defined as:
	\begin{equation*}
	I^\mathcal{P} = (\mathcal{P} (u_1), \ldots ,\mathcal{P}(u_m))\end{equation*}
	which is a square-free monomial ideal in the polynomial ring $R^\mathcal{P}$ .
\end{definition}
The following lemma shows that the  regularity is preserved under polarization.
\begin{lemma}\cite[Corollary 1.6.3]{herzog}\label{pol} Let $I \subset R $ be a monomial ideal and $ I^\mathcal{P}  \subset R^\mathcal{P}$  its polarization. Then 
	
	\item[(a)]   $\beta_{ij} (I) = \beta_{ij} (I^\mathcal{P})$ for all $i$ and $j,$
	\item[(b)]  $\reg (I) = \reg ( I^\mathcal{P} ).$

\end{lemma}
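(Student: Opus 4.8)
The plan is to establish part (a) directly and then deduce part (b) instantly from the definition of regularity. The guiding idea is that $R^{\mathcal{P}}/I^{\mathcal{P}}$ is obtained from $R/I$ by adjoining a regular sequence of linear forms, and that reducing a minimal graded free resolution modulo such a sequence preserves every graded Betti number.

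First I would fix notation by letting $a_i$ denote the largest power of $x_i$ appearing among the generators $u_1,\dots,u_m$ of $I$, so that $R^{\mathcal{P}}=k[x_{ij}\mid 1\le i\le n,\ 1\le j\le a_i]$ and $x_{i1}$ plays the role of $x_i$. I then introduce the linear forms $\theta_{ij}:=x_{ij}-x_{i1}$ for $1\le i\le n$ and $2\le j\le a_i$, and write $\Theta$ for this whole collection. The graded $k$-algebra map $R^{\mathcal{P}}\to R$ sending $x_{ij}\mapsto x_i$ is surjective with kernel $(\Theta)$, giving an isomorphism $R^{\mathcal{P}}/(\Theta)\cong R$; under it each $\mathcal{P}(u_k)$ is sent to $u_k$, so $I^{\mathcal{P}}$ maps onto $I$ and hence $R^{\mathcal{P}}/(I^{\mathcal{P}}+(\Theta))\cong R/I$ as graded rings.

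The crucial step is to show that $\Theta$ is a regular sequence on $R^{\mathcal{P}}/I^{\mathcal{P}}$. Since $I^{\mathcal{P}}$ is squarefree, its associated primes are generated by subsets of the variables $x_{ij}$, and no such prime contains both $x_{ij}$ and $x_{i1}$; thus $\theta_{ij}$ lies in no associated prime of $R^{\mathcal{P}}/I^{\mathcal{P}}$ and is a nonzerodivisor. I would then proceed inductively, peeling off one $\theta_{ij}$ at a time and checking that each successive quotient is again (up to isomorphism) the quotient by a polarization-type ideal, so that the same colon/associated-prime argument applies at every stage. This is the main obstacle: once the first linear form is factored out the ideal is no longer monomial, so the control of its associated primes must be maintained through the induction, which is precisely where the combinatorial bookkeeping built into polarization is needed.

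Finally, granting that $\Theta$ is a regular sequence of degree-one forms lying in the maximal homogeneous ideal, I would tensor the minimal graded free resolution of $R^{\mathcal{P}}/I^{\mathcal{P}}$ over $R^{\mathcal{P}}$ with $R^{\mathcal{P}}/(\Theta)\cong R$. The regular-sequence property preserves exactness, the degree-one entries of the differentials remain in the maximal ideal so minimality is retained, and the internal grading is respected since each $R^{\mathcal{P}}(-j)$ becomes $R(-j)$. The outcome is the minimal graded free resolution of $R/I$, whence $\beta_{ij}(R^{\mathcal{P}}/I^{\mathcal{P}})=\beta_{ij}(R/I)$ for all $i,j$, and after the homological shift $\beta_{ij}(I)=\beta_{i+1,j}(R/I)$ this gives $\beta_{ij}(I)=\beta_{ij}(I^{\mathcal{P}})$, proving (a). For (b) I would simply apply $\reg(M)=\max\{j-i\mid \beta_{ij}(M)\neq 0\}$: equality of all graded Betti numbers forces the two maxima to coincide, so $\reg(I)=\reg(I^{\mathcal{P}})$.
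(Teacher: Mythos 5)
The paper gives no proof of this lemma at all: it is quoted verbatim from the cited reference (Herzog--Hibi, \emph{Monomial Ideals}, Corollary 1.6.3), so there is no in-paper argument to compare against. Your proposal is correct and is essentially the proof given in that reference: one shows the depolarizing linear forms $\theta_{ij}=x_{ij}-x_{i1}$ form a regular sequence on $R^{\mathcal{P}}/I^{\mathcal{P}}$ with quotient $R/I$, and then reduction of the minimal graded free resolution modulo this sequence preserves all graded Betti numbers, from which (b) follows by the definition of regularity. Two small points of precision: first, your claim that no associated prime of $I^{\mathcal{P}}$ contains both $x_{i1}$ and $x_{ij}$ deserves its one-line justification (every generator of $I^{\mathcal{P}}$ divisible by $x_{ij}$ with $j\geq 2$ is divisible by $x_{i1}$, so a minimal vertex cover containing both would not be minimal); second, in the inductive step the intermediate ideal is still \emph{monomial} after the natural identification of variables --- it is merely no longer squarefree --- and the statement you need at each stage is exactly the one-step polarization lemma for arbitrary monomial ideals (Herzog--Hibi, Lemma 1.6.1), whose associated primes are still monomial primes, so the same ``a linear form $x-y$ lies in a monomial prime only if both variables do'' argument goes through.
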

Next we see  the connection of square-free monomial ideals with  hypergraphs and labeled hypergraphs.   
\subsection{Hypergraph}
 A hypergraph $ \mathcal{H} $ over $ X = \{x_1, \ldots , x_n \} $
 is a pair $ \mathcal{H} = (X, \mathscr{E}) $ where $ X $ is the set of elements called  vertices and $ \mathscr{E} $ is a set of non-empty subsets of $ X $ called hyperedges or edges. A hypergraph $\mathcal{H} $ is simple if there is no nontrivial containment between any
pair of its edges.\\
%Let $ k $ be a field, and identify the vertices in V with the variables in a polynomial ring
%$ R = k[x_1, \ldots , x_n]. $
 The following construction gives a one-to-one correspondence between
square-free monomial ideals in $ R = k[x_1, \ldots , x_n]  $  and simple hypergraphs over $X.$ 
\begin{definition}
 Let $\mathcal{H} $ be a simple hypergraph on $ X. $ For a subset $ E \subset X ,  $ let $ x^E $ denotes
 the monomial $\displaystyle{\prod_{x_i \in E} x_i}$. Then the edge ideal of $\mathcal{H} $ is defined as
$$I(\mathcal{H}) = (x^E ~|~ E  \subseteq X ~\mbox{is an edge in}~\mathcal{H}
) \subset R. $$                                                                     
\end{definition}

\subsection{Labeled Hypergraph}
The labeled hypergraph associated to a given square-free
monomial ideal $ I $  introduced in \cite{McCullough}. In the definition of labeled hypergraph, generators of
the ideal correspond to vertices of the hypergraph and the edges of the hypergraph correspond to variables
which are obtained by the divisibility relations between the minimal generators of the ideal.
\begin{definition}\cite{McCullough}   Let $ I \subset R = k[x_1, \ldots , x_n] $ be a square-free monomial ideal with minimal monomial
generating set $ \{f_1, \ldots , f_{\mu}  \}. $ The labeled hypergraph of $ I $ is the tuple $ H(I) = (V ,  X,E,\mathcal{E} ). $   The
set $ V  = [{\mu}] $ is called the vertex set of $ H. $ The set $ \mathcal{E} $ is called the edge set of $ H(I) $ and is the
image of the function $ E : \{ x_1,\ldots , x_n \} \longrightarrow  \mathscr{P}(V)$ defined by $  E(x_i)   = \{j : x_i ~\mbox{divides}~ f_j \}  $ where $ \mathscr{P}(V) $
represents the
power set of $ V . $ Here the set $ X =  \{ x_i
: E(x_i) \neq   \emptyset   \} .$
\end{definition}  

The label of an edge $  F \in \mathcal{E}   $ is defined as the collection of variables $ x_i \in \{x_1, \ldots , x_n \}  $ such that $ E(x_i) = F. $
The number $ |X| $ counts the number of labels appearing in $ H(I) $ while $ |\mathcal{E}| $ counts the number of distinct edges. A vertex $ v \in V $ is
closed if $ \{v\} \in \mathcal{E} ,$ otherwise,  $  v    $  is open. An edge $ F \in \mathcal{E}  $ of $ H(I) $ is called simple if $ |F| \geq   2 $ and $ F $ has no proper subedges other than $\emptyset .$  If every open vertex is contained in exactly one simple edge, then we
say that $ H(I) $ has isolated simple edges.

\begin{example}
	Let $ I = (x_1x_3x_5, x_1x_2x_3, x_3x_4x_5,x_4x_5x_6) \subset k[x_1, \ldots , x_6]. $ Let $ f_1 = x_1x_3x_5, f_2 = x_1x_2x_3 ,f_3 = x_3x_4x_5 $   $ \mbox{and}$  $ f_4 = x_4x_5x_6.$ Then $ V  = \{ 1, 2, 3, 4   \},$ $ X =
	\{x_1, x_2, x_3, x_4,\\ x_5 ,x_6\} $  and    $ \mathcal{E} = \{ \{ 1,2\},\{2\} ,\{1,2,3\}, \{  3,4 \},   \{1,3,4\} , \{4\} \} .$ See Figure   1.    	
	\begin{figure}[!ht]
		\begin{tikzpicture}[scale=0.8]
	\definecolor{pastelgreen}{rgb}{0.47, 0.87, 0.47}
		\begin{scope}[ thick, every node/.style={sloped,allow upside down}]
		\draw[-,thick] (0,0) --(3,0);
		%\draw[-,thick] (0,0) --(0,3);
		\draw[-,thick] (0,3) --(3,3);
			\shade[left color=gray!2!white,right color=gray]  (3,0) -- (0,3) -- (3,3)  -- cycle;  
			\shade[left color=blue!20!white,right color=blue]  (3,0) -- (0,3) -- (0,0)  -- cycle;	 
%		\draw[fill=gray]  (2,0) -- (0,2) -- (2,2)  -- cycle; 			
	 %	\draw[dotted,thick] (8.5,-0.5) --(8.5,0.5);
%	 	\shade[left color=pastelgreen!40!white,right color=pastelgreen]  (3,0) -- (0,3) -- (3,3)  -- cycle;  
		%	\draw (7.5,1) -- node {\midarrow}(8.5,0.5);
		%	\draw (8.5,-0.5) --node {\midarrow}(7.5,-1);  
		%	%\draw[-,thick] (7.5,-1) --node {\midarrow}(6,-1);
		%	\draw (7.5,-1) --node {\midarrow}(6,-1);
		%	\draw [fill] (6,1) circle [radius=0.05];
			\draw [fill=black] (0,0) circle [radius=0.15];
	\draw [fill=white] (3,0) circle [radius=0.15];
	\draw [fill=white] (0,3) circle [radius=0.15];
	\draw [fill=black] (3,3) circle [radius=0.15];
			\node at (-0.4,3.4) {$1$}; 
			\node at (3.4,3.4) {$2$};
			\node at (3.4,-0.4) {$3$};
			\node at (-0.4,-0.4) {$4$};
			\node at (1.5,3.4) {$x_1$}; 
			\node at (2,2) {$x_3$};
		    \node at (3.5,2.9) {$x_2$};
		    \node at (1.5,-0.4) {$x_4$};
		    \node at (1,1) {$x_5$};
		     \node at (-0.5,0.05) {$x_6$};  
		%	\node at (0,-1) {$x_{n-2}$};
		
		%	\node at (1.65,0) {$C_n$};
		%	\node at (6.9,0) {$C_m$};
		%	\node at (4.2,-0.6) {$P_1$};
%		\shade[left color=gray!10!white,right color=gray] (2,0)
%		-- ++(2,2) -- ++(0,2)  -- cycle;% column
		\end{scope}
		\end{tikzpicture}
		\caption{The labeled hypergraph of $ I = (x_1x_3x_5, x_1x_2x_3, x_3x_4x_5,x_4x_5x_6).$}  
	\end{figure}
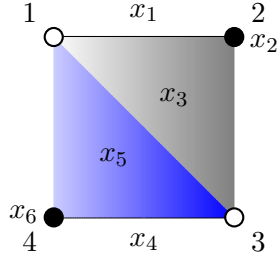
\end{example}   
 
\section{Some results of regularity in weighted Oriented graphs}    

In this section, we compute the
 regularity of $ R/I(D) $ for  certain class of  weighted oriented graph   $ D $ by connecting their polarized edge ideal with the labeled hypergraph and using the technique of Betti splitting. In this section we have considered a particular type of weighted oriented graph having  property $P$ as defined in the introuducion.
% In this section we consider $R$ to be the polynomial ring of all the vertices of the weighted oriented graph. 

%\begin{property}
%Let $ D $ be a weighted oriented graph with weight function $ w $ on the vertices $ {x_1,\ldots, x_n} .$ Then $D$ is said to have property $P$
%if there is at most one edge oriented into each vertex and suppose that for all non-leaf, non-source  
%vertices, $  x_j , $ either $ w_j \geq 2 $ or the unique edge $ (x_i
%, x_j ) $ into the vertex $ x_j $ has the property that $ x_i $ is a leaf.  	
%\end{property}  
%\begin{proposition}\cite[Corollary 3.1]{beyarsan}\label{only-one}
%	Let $ D $ be a weighted oriented graph with weight function w on the vertices $ {x_1, . . . , x_n} $ with
%	the property that there is only one edge oriented into each vertex $ x $ with $ w(x) \geq 2 $. Suppose that for all non-leaf, non-source  
%	vertices, $  x_j , $ either $ w_j \geq 2 $ or the  edge $ (x_i
%	, x_j ) $ into the vertex $ x_j $ has the property that $ x_i $ is a leaf.
%	Then $$\displaystyle{  \reg(R/I(D)) = \sum_{i=1}^{n} w_i -|E(D)|}  .$$
%	
%\end{proposition}

The regularity of edge ideal of weighted oriented graph having property $P$ was first studied by Beyarslan et al. in the following result.
\begin{proposition}\cite[Corollary 3.1]{beyarslan}\label{atmost-one}
Let $ D $ be a weighted oriented graph having
 property $P$ with weight function w on the   vertices $ {x_1,\ldots, x_n} $. 
Then $$\displaystyle{  \reg(R/I(D)) = \sum_{i=1}^{n} w_i -|E(D)|}  .$$

\end{proposition}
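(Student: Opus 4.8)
The plan is to prove the equivalent statement $\reg(I(D)) = \sum_{i=1}^n w_i - |E(D)| + 1$ (using $\reg(R/I(D)) = \reg(I(D)) - 1$) by induction on $|E(D)|$. First I would reduce to the connected case: if $D = D_1 \sqcup D_2$, then $I(D_1)$ and $I(D_2)$ involve disjoint sets of variables, so Lemma \ref{reg.2} gives $\reg(I(D)) = \reg(I(D_1)) + \reg(I(D_2)) - 1$, which matches the additivity of $\sum w_i - |E(D)|$ over components. Since property $P$ forces every vertex to have in-degree at most one, a connected $D$ has $|E(D)| \in \{n-1, n\}$, so it is either an out-tree rooted at its unique source or is unicyclic with one directed cycle. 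The base case $|E(D)| = 1$ is immediate, as $I(D) = (x_i x_j^{w_j})$ is principal of degree $1 + w_j$.

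For the tree case I would pick a sink leaf $x_\ell$ (present in any finite arborescence with an edge) with unique neighbour $y$, so that $m := y\,x_\ell^{w_\ell}$ is the only generator of $I(D)$ divisible by $x_\ell$. Setting $J = (m)$ and letting $K$ collect the remaining generators, $J$ is principal, hence has a linear resolution, so $I(D) = J + K$ is a Betti splitting by Theorem \ref{betti.1}. Here $K = I(D \setminus x_\ell)$ is again a property-$P$ arborescence with one fewer edge and unchanged weights, so $\reg(K)$ is known by induction. Because $x_\ell \notin \Supp(K)$, one has $J \cap K = m\,(K : m) = m\,(K : y)$, and multiplication by the monomial $m$ is injective of degree $1 + w_\ell$, giving $\reg(J \cap K) = \reg(K : y) + 1 + w_\ell$. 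Corollary \ref{betti.2} then expresses $\reg(I(D))$ as the maximum of $1 + w_\ell$, $\reg(K)$, and $\reg(K : y) + w_\ell$.

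The heart of the argument is the colon identity $\reg(K : y) = \reg(K) - 1$; granting it, the third quantity equals $w_\ell + \reg(K) - 1 = \sum_i w_i - |E(D)| + 1$, and a short check using $w_v \ge 1$ shows this is the maximum, yielding the formula. I expect this identity to be the main obstacle, since $(K : y)$ is \emph{not} the edge ideal of a property-$P$ graph: taking the colon decrements the weight of $y$ on its in-edge and replaces each out-edge $(y,z)$ by the pure power $z^{w_z}$. I would therefore induct over the larger class of ideals $I(D') + (z^{w_z} : z \in S)$, where $D'$ has property $P$ and $S$ is a set of its sources carrying an extra pure power. This class is closed under the splitting-and-colon step above; the pure powers have support disjoint from most of the generators, so that Lemma \ref{reg.2} and Lemma \ref{reg.3} apply; and property $P$ is precisely what guarantees that the maximum in each use of Corollary \ref{betti.2} is attained by the intersection term. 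Equivalently, one may polarize via Lemma \ref{pol} and read the regularity off the associated labeled hypergraph, whose open vertices lie in isolated simple edges exactly because of property $P$.

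Finally, a unicyclic $D$ has no sink leaf, but property $P$ forces $w_v \ge 2$ on every vertex of the directed cycle, since such a vertex is neither a leaf nor a source and its unique in-neighbour is again a (non-leaf) cycle vertex. I would delete one cycle edge and use the short exact sequence of Lemma \ref{reg.4} relating $\reg(R/I(D))$, $\reg(R/(I(D):x))$ and $\reg(R/(I(D),x))$ for a cycle variable $x$, reducing both outer modules to the tree case already handled; the condition $w_v \ge 2$ is exactly what makes the resulting estimates tight and forces the equality. Combined with the additivity from Lemma \ref{reg.2}, this gives $\reg(R/I(D)) = \sum_{i=1}^n w_i - |E(D)|$.
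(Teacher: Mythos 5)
You should first be aware that the paper contains no proof of Proposition \ref{atmost-one} at all: it is imported verbatim from \cite{beyarslan}, and the paper only records that the proof there proceeds by polarization and labeled hypergraphs, i.e.\ by Lemma \ref{pol} together with Proposition \ref{simple} --- the same technique the paper itself uses to prove its generalization, Theorem \ref{exteded-s}. Your route (Betti splitting plus induction on $|E(D)|$) is therefore genuinely different from the source's, and its outer layer is sound: the component reduction via Lemma \ref{reg.2}, the observation that property $P$ forces a connected $D$ to be an out-tree or unicyclic with $w \geq 2$ along the directed cycle, the sink-leaf splitting $I(D) = (y x_\ell^{w_\ell}) + I(D \setminus x_\ell)$ via Theorem \ref{betti.1}, and the closing arithmetic are all correct. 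But the entire proof funnels into the colon identity $\reg(K : y) = \reg(K) - 1$, which you yourself flag as the main obstacle, and your treatment of it has a genuine gap.

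The auxiliary class you propose to induct over --- $I(D') + (z^{w_z} : z \in S)$ with $D'$ having property $P$ and $S$ a set of its sources --- is too large to carry the formula your induction needs. Unwinding $\reg(K:y)=\reg(K)-1$ against the inductive formula for $K$ shows the class formula must read $\reg\bigl(I(D') + (z^{w_z} : z \in S)\bigr) = \reg(I(D')) + \sum_{z \in S}(w_z - 2)$ for non-isolated $z$ (isolated $z$ contributing $w_z - 1$ instead). Now take $D'$ to be the path $z \to t \to u$ with $w_t = 1$, $w_u = 2$; it has property $P$ because $z$ is a leaf of $D'$, and $z$ is a source, so $L = I(D') + (z^{3}) = (z^3, zt, tu^2)$ lies in your class. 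But $L = (z^3,t) \cap (z,u^2)$ has minimal free resolution $0 \to R(-4)^2 \to R(-2) \oplus R(-3)^2 \to L \to 0$, so $\reg(L) = 3$, while the formula predicts $\reg((zt,tu^2)) + (3-2) = 4$. Such an $L$ can never arise as $(K:y)$ from a property-$P$ graph, because property $P$ upstream forces every out-neighbour $t$ of a vertex $z \in S$ to be a leaf or to have $w_t \geq 2$; but that condition is not recorded in your class, so the inductive hypothesis you carry is false on the class you carry it over, and the induction cannot close as stated. Relatedly, your appeal to Lemmas \ref{reg.2} and \ref{reg.3} fails exactly where it matters: the pure power $z^{w_z}$ shares the variable $z$ with every out-edge generator $z t^{w_t}$, so no disjoint-support lemma applies to that pair; and the isolated/non-isolated dichotomy in the correction term never appears in your outline. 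The unicyclic case invokes the same unproved machinery, since $(I(D):x)$ is again of this mixed type, plus an unverified tightness claim for Lemma \ref{reg.4}. To repair the argument you must either build the inherited conditions into the class and prove the refined two-case formula, or carry out the polarization/labeled-hypergraph computation you mention only in your final sentence --- which is precisely the proof in \cite{beyarslan} and the method of the paper's Theorem \ref{exteded-s}.
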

%We study the regularity of edge ideal of weighted oriented graphs with property $P$ in the whole section.\\
Beyarslan et al. have proved the above result using the concept of labeled hypergraph described in \cite{McCullough}. We noticed that the following result of Lin and  McCullough using the concept of  isolated simple edges of labeled hypergraphs will be useful for calculating the regularity of some new class of weighted oriented graphs.    
\begin{proposition}\cite[Theorem 4.12]{McCullough}\label{simple}
Let $  I \subset R  $ be a square-free monomial ideal and suppose that $ H(I) = (V, X, E, \mathcal{E}) $ has isolated simple edges. Then
 $$ \reg(R/I) = |X| - |V| +    \sum_{\substack{F \in \mathcal{E}\\F ~ \mbox{simple}}} (|F|-1) .$$      	
\end{proposition}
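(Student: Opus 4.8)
The plan is to prove the formula by induction, exploiting the combinatorial skeleton forced by the isolated-simple-edge hypothesis together with the Betti splitting machinery of Theorem \ref{betti.1} and Corollary \ref{betti.2} and the short exact sequence estimate of Lemma \ref{reg.4}. First I would record the structural facts. Since a simple edge $F$ has $|F|\ge 2$ and no proper nonempty subedge, no closed vertex can lie in $F$: a closed vertex $v$ gives $\{v\}\in\mathcal{E}$, and $\{v\}\subsetneq F$ would be a proper subedge. Hence every simple edge consists only of open vertices. Because each open vertex lies in exactly one simple edge, two distinct simple edges are disjoint, so the simple edges $F_1,\dots,F_s$ partition the set of open vertices, while every closed vertex $j$ carries a private label, i.e.\ a variable $x$ with $E(x)=\{j\}$. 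Writing $c$ for the number of closed vertices we then have $|V| = c + \sum_{i=1}^{s}|F_i|$, and the claimed identity is equivalent to the cleaner statement $\reg(R/I) = |X| - c - s$.

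The base case is $s=0$, where every vertex is closed and each generator $f_j$ has a private variable; here the target is $\reg(R/I)=|X|-|V|$, which I would prove by a secondary induction on $|V|$. Choosing a private variable $x$ of some generator $f_\mu$, the ideal $J=(f_\mu)$ collects all generators divisible by $x$ and, being principal, has a linear resolution, so by Theorem \ref{betti.1} the decomposition $I=J+K$ with $K=(f_1,\dots,f_{\mu-1})$ is a Betti splitting. Corollary \ref{betti.2} reduces the computation to $\reg(J)=\deg f_\mu$, to $\reg(K)$, and to $\reg(J\cap K)$. The middle term is handled by the inductive hypothesis once I check that deleting a closed vertex again yields a labeled hypergraph of the required type: the only labels that change are those of variables dividing $f_\mu$, and I must verify that none of these creates a new simple edge or destroys a private label. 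The intersection $J\cap K = f_\mu\cdot\big((f_1,\dots,f_{\mu-1}):f_\mu\big)$ must then be shown to contribute a strictly smaller value after the shift by $-1$, so that the maximum in Corollary \ref{betti.2} is realized by $\reg(K)$ and equals $|X|-|V|$.

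For the inductive step $s\ge 1$ I would peel off one simple edge $F=F_1$. Writing $\xi$ for the product of all variables whose label contains $F$, and $f_j=\xi\, n_j$ for $j\in F$, the generators indexed by $F$ share the common factor $\xi$, and simplicity of $F$ forces every remaining variable of each $n_j$ to link $f_j$ to a generator outside $F$ (in particular no $f_j$ with $j\in F$ can have a private factor, since that would already produce a singleton subedge). I would split $I$ along a variable $x$ with $E(x)=F$, so that $J=(f_j:j\in F)$ is exactly the set of generators divisible by $x$; after verifying that $J$ has a linear resolution, Theorem \ref{betti.1} again produces a Betti splitting $I=J+K$ with $K=(f_j:j\notin F)$. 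The edge $F$ contributes its term $|F|-1$ through $\reg(J)$, the ideal $K$ is governed by the induction since its hypergraph has one fewer simple edge, and Corollary \ref{betti.2} should assemble the three pieces into the predicted value $|X|-c-s$; Lemmas \ref{reg.2} and \ref{reg.3} would be invoked wherever a piece splits into disjoint-variable factors.

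The main obstacle, in both cases, is the control of the intersection ideal $J\cap K$ and the preservation of the isolated-simple-edge hypothesis under deletion. I expect the delicate points to be: (i) showing that the chosen $J$ has a linear resolution, which is precisely where the meaning of \emph{simple edge} must be used; (ii) verifying that $\reg(J\cap K)-1$ never exceeds the contribution of the surviving pieces, so that the maximum in Corollary \ref{betti.2} lands on the term predicted by the formula; and (iii) bookkeeping the change in $|X|$ when variables supported only on the deleted generators disappear, matching it against the change in the count $|X|-c-s$. Should $J$ fail to have a linear resolution, I would instead run the same reduction through the short exact sequence $0\to R/(I:x)\to R/I\to R/(I+(x))\to 0$ and extract the two inequalities from Lemma \ref{reg.4}, using the induction to evaluate $\reg(R/(I:x))$ and $\reg(R/(I+(x)))$ and then matching the bound to the exact value.
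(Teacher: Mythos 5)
First, a point of reference: the paper itself contains no proof of this proposition --- it is quoted directly from Lin--McCullough \cite[Theorem 4.12]{McCullough} --- so your attempt can only be judged on its own terms, not against an internal argument. Your opening reductions are correct and genuinely useful: a closed vertex cannot lie in a simple edge (its singleton would be a proper subedge), so under the isolated-simple-edge hypothesis the simple edges $F_1,\dots,F_s$ partition the open vertices, every closed vertex carries a private label, and the asserted formula is equivalent to $\reg(R/I)=|X|-c-s$, where $c$ is the number of closed vertices. Beyond that, however, the proposal is a program whose decisive steps are flagged rather than carried out, and two of the structural expectations on which the program rests are false.

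(i) In the base case $s=0$ you assert that the maximum in Corollary \ref{betti.2} should be realized by $\reg(K)$ and equal the target value. Take $I=(xyz,zw)$: both vertices are closed, there are no simple edges, and the formula gives $\reg(R/I)=|X|-|V|=2$. Splitting off $J=(xyz)$ leaves $K=(zw)$ with $\reg(R/K)=1$, strictly below the target; the correct value is carried by $\reg(J)$ and by $\reg(J\cap K)-1=\reg((xyzw))-1$. This happens whenever the deleted generator has two or more private variables, so the bookkeeping must run precisely through the terms you hoped to show were dominated --- which is the content you left unverified. (ii) In the inductive step you take $J=(f_j : j\in F)$ for a simple edge $F$ and plan to invoke Theorem \ref{betti.1} ``after verifying that $J$ has a linear resolution.'' That verification can fail outright, because the generators indexed by a simple edge need not even have the same degree. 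For $I=(abx,\,abyz,\,cxyz)$ one checks $E(a)=E(b)=\{1,2\}$, $E(x)=\{1,3\}$, $E(y)=E(z)=\{2,3\}$, $E(c)=\{3\}$; the edge $F=\{1,2\}$ is simple, vertices $1,2$ are the only open vertices and lie in no other simple edge, so $H(I)$ has isolated simple edges, yet $J=(abx,abyz)=ab\,(x,yz)$ is generated in degrees $3$ and $4$ and has no linear resolution. Your fallback --- the sequence $0\to R/(I:x)\to R/I\to R/(I+(x))\to 0$ with Lemma \ref{reg.4} --- is left entirely unspecified: one must identify the labeled hypergraphs of $(I:x)$ and $(I+(x))$, show they again satisfy the hypothesis (they need not), and exclude the degenerate case $\reg(A)-1=\reg(C)$ in which Lemma \ref{reg.4} yields only an inequality. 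As it stands, the proposal is a plausible plan whose ``delicate points'' are exactly the places where it breaks, so it cannot be accepted as a proof.
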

The following theorem shows that the regularity of  edge ideal of one or more weighted oriented graphs with property $P$ remains unchanged even after adding new edges among the connected and disconnected components.      
\begin{theorem}\label{exteded-s}
	Let $ D_1 , D_2 , \ldots, D_s $ for $s \geq 1$ are the  weighted   oriented graphs having
	 property $P$ with weight function $ w $  
	on vertex sets $ \{x_{1_1},\ldots,x_{{n_1}_1}\} ,$$ \{x_{1_2},\ldots,x_{{n_2}_2}\} , \ldots , \{x_{1_s},\\\ldots,x_{{n_s}_s}\} $ respectively.
	   Let  $ D   $ be a weighted oriented graph obtained by adding $ k $ new oriented edges among $ D_1 , D_2 , \ldots, D_s $   where every edge is of the form $ (x_{a_i}
	, x_{b_j} ) $     for some $ x_{a_i} \in  V(D_i),$  $x_{b_j}  \in V(D_j) $ $(i$ may equal with $j)$   with  $ w_{a_i} , w_{b_j} \geq 2 $ and  no vertex of  $ N^-_{D_j}(x_{b_j} ) $ is a leaf vertex in   $D_j.$            Then
	$$ \reg(R/I(D)) =\reg(R/I(D_1)) +\cdots+ \reg(R/I(D_s)).$$            
\end{theorem}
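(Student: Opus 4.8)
The plan is to reduce everything to the regularity formula for hypergraphs with isolated simple edges (Proposition \ref{simple}), after polarizing. First I would record, via Proposition \ref{atmost-one}, that the claimed right-hand side equals
$$\sum_{i=1}^s\Big(\sum_{v\in V(D_i)}w_v-|E(D_i)|\Big)=\sum_{v\in V(D)}w_v-|E(D)|+k,$$
where I used $|E(D)|=\sum_{i=1}^s|E(D_i)|+k$. So it suffices to prove $\reg(R/I(D))=\sum_{v}w_v-|E(D)|+k$ directly. By Lemma \ref{pol} I may compute $\reg(R/I(D)^{\mathcal P})$ instead, so I work with the labeled hypergraph $H=H(I(D)^{\mathcal P})=(V,X,E,\mathcal E)$. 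Since distinct edges of $D$ give distinct minimal generators, $|V|=|E(D)|$; and because every non-source vertex $v$ has an in-edge (so $x_v$ attains power $w_v$) while sources have weight $1$, polarization introduces exactly the live variables $x_{v1},\dots,x_{vw_v}$ for each $v$, whence $|X|=\sum_v w_v$.

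The crux is to describe $\mathcal E$ and its simple edges. A direct computation of $E(x_{v\ell})$ shows that $E(x_{v1})$ is the star of $v$ (all edges incident to $v$) and, for $\ell\ge 2$, that $E(x_{v\ell})$ is the set of in-edges of $v$. I would then argue that the only simple edges of $H$ are the sets $F_b$ of in-edges at the targets $x_b=x_{b_j}$ of the newly added edges, each being simple of size $1+t_b$, where $t_b$ is the number of new edges into $x_b$. Here the two hypotheses enter decisively. The assumption $w_{a_i}\ge 2$ forces every tail $x_{a_i}$ to be a non-source, hence of degree $\ge 2$ in $D$, so its star has size $\ge 2$ and no new edge becomes a singleton edge through its tail; the assumption that no vertex of $N_{D_j}^-(x_{b_j})$ is a leaf guarantees that the old in-neighbour's star is not contained in $F_b$, so $F_b$ has no proper nonempty subedge and is genuinely simple. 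Since each $D_i$ has property $P$, every original vertex carries at most one in-edge, so no set $F$ arising from an untouched vertex has size $\ge 2$; thus the simple edges are exactly the $F_b$, and $\sum_b(|F_b|-1)=\sum_b t_b=k$.

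Finally I would verify the isolated-simple-edges condition: every open vertex (generator) of $H$ lies in exactly one simple edge, closed generators being exempt. Property $P$ makes every generator whose head is not a new target a closed vertex (its head either has weight $1$ with a leaf in-neighbour, or has a unique in-edge realizing a singleton $E(x_{h2})$), so such vertices drop out. The generators comprising each $F_b$ are open precisely because of the weight and leaf hypotheses above, and the $F_b$ are pairwise disjoint since a generator's head determines which $F_b$ it can enter; hence each open vertex meets exactly one simple edge. Proposition \ref{simple} then gives
$$\reg(R/I(D))=|X|-|V|+\sum_{F\ \text{simple}}(|F|-1)=\sum_v w_v-|E(D)|+k,$$
matching the right-hand side.

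I expect the main obstacle to be the combinatorial bookkeeping in the previous two paragraphs: showing that $H$ has isolated simple edges with exactly the claimed simple edges. This is where the hypotheses $w_{a_i},w_{b_j}\ge 2$ and the leaf condition are indispensable, since dropping either introduces a singleton subedge inside some $F_b$ (or turns one of its members into a closed vertex), which both destroys the simplicity of $F_b$ and alters the count $\sum_b(|F_b|-1)$. Care will also be needed to treat several new edges sharing a common tail or head, and the case $i=j$, but these fold into the same computation of the $F_b$.
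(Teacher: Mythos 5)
Your strategy is the same as the paper's: polarize, pass to the labeled hypergraph, exhibit the in-edge sets $F_b$ at the targets of the new edges as simple edges, invoke Proposition \ref{simple}, and do the arithmetic via Proposition \ref{atmost-one}. Much of it matches the paper and is fine: the counts $|V|=|E(D)|$ and $|X|=\sum_v w_v$, the description of the hypergraph edges ($E(x_{v1})$ is the star of $v$, $E(x_{v\ell})$ for $\ell\ge 2$ is the in-edge set), and the simplicity of each $F_b$, which does use both hypotheses exactly as you say. The genuine gap is the sentence ``thus the simple edges are exactly the $F_b$.'' Property $P$ only shows that the in-edge sets $E(x_{v\ell})$, $\ell\ge 2$, of untouched vertices have size at most one; it says nothing about the stars $E(x_{v1})$, which are also edges of $H$ and have size $\deg_D(v)\ge 2$ for every non-leaf $v$. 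Ruling out the stars is where the paper spends the bulk of its proof: it shows by a case analysis that every generator outside $\bigcup_b F_b$ is a closed vertex, and then argues that each star is non-simple because it properly contains a closed singleton or some $F_b$. You do assert the closedness statement at the end, but closedness of those vertices does not by itself prevent a star from being simple, and no argument to that effect appears; consequently the identification of the simple edges, the value $\sum_F(|F|-1)=k$, and the isolated-simple-edges hypothesis of Proposition \ref{simple} are all unsupported as written.

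Moreover, this is not a routine omission that more care would fix: the claim can fail under the stated hypotheses. Take $D_1$ with edges $(x,y_1),(x,y_2)$ and $w_{y_1}=w_{y_2}=2$, $D_2$ with the single edge $(b,a)$ and $w_a=2$, and new edges $(a,y_1),(a,y_2)$; every hypothesis holds, since $x$ is not a leaf of $D_1$ and all relevant weights are at least $2$. Writing $m_1,m_2,m_3,g_1,g_2$ for the generators coming from $(x,y_1),(x,y_2),(b,a),(a,y_1),(a,y_2)$, the hypergraph edges are $\{m_1,m_2\}$ (the star of $x$), $\{m_3\}$, $\{m_3,g_1,g_2\}$, $F_{y_1}=\{m_1,g_1\}$ and $F_{y_2}=\{m_2,g_2\}$. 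Here the star of $x$ \emph{is} simple, the open vertex $m_1$ lies in two simple edges, and $H$ fails to have isolated simple edges, so Proposition \ref{simple} cannot be applied at all; your formula would predict $\reg(R/I(D))=|X|-|V|+k=8-5+2=5$, yet the short exact sequence with respect to the variable $a$ (note $(I(D):a)=(ab,y_1^2,y_2^2)$ and $(I(D),a)=(a,xy_1^2,xy_2^2)$, both with quotient regularity $3$) gives $\reg(R/I(D))\le 4$ by Lemma \ref{reg.4}, strictly below $\reg(R/I(D_1))+\reg(R/I(D_2))=3+2=5$. So the case your write-up silently skips --- a source of some $D_i$ all of whose out-edges become in-edges of target vertices --- is exactly where the method (and, it should be said, the paper's own treatment of this step, whose claim that every star properly contains an $F_b$ or a closed singleton also fails here) breaks down; it is not covered by the caveats about shared tails and heads that you list at the end.
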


\begin{proof}
   	Here $ V(D)= V(D_1) \cup\cdots \cup V(D_s) =   \{ x_{1_1},\ldots,x_{{n_1}_1} ,  \ldots , x_{1_s},\ldots,x_{{n_s}_s} \} .$ 
Let $ |E(D_1)|=e_1 ,\ldots, |E(D_s)|=e_s, $  then   $ |E(D)|= e_1 + \cdots + e_s   + k.   $  Let $ I(D_1),\ldots,I(D_s),I(D) $ be the edge ideals of the weighted oriented graphs $ D_1,\ldots,D_s,  D$ respectively.
	Let $ m_1, \ldots , m_{e_1+\cdots + e_s+k} $ be the minimal
	generators of the polarized ideal $ I(D)^{\mathcal{P}} .$
	Suppose $ k_1,\ldots,k_s $ number of new  edges are oriented towards $ D_1,\ldots,D_s $ where $k_1+ \cdots +k_s = k. $       Let the $ k_1 $  new  edges are oriented towards   $ r_1 $   vertices of $ D_1 $ where for any  vertex $ x_{j_1} $ among  those $ r_1 $ verices  $ w_{j_1} \geq 2 $ and no vertex of  $ N^-_{D_1}(x_{j_1} ) $ is a leaf vertex in   $D_1.$    
	Now we consider the labeled hypergraph of $ I(D)^\mathcal{P} ,$ i.e., $ H(I(D)^\mathcal{P}) = (V ,  X,E,\mathcal{E})$ where $ V = [e_1 +\cdots +  e_s + k]. $ Without loss of generality let   $ x_{q_1} $ is one of those $ r_1 $ vertices and $ l_{1_1} $ number of new edges are oriented towards $ x_{q_1} .$ Since $D_1$ has property $P$, $ |N^-_{D_1}(x_{q_1})|=1 $ and so   $|N^-_{D}(x_{q_1})|= l_{1_1} + 1 .$  
	Let the generators corresponding to  those $ l_{1_1}  + 1 $  edges numbered as $ d_0,d_1,d_2,\ldots,d_{l_{1_1}}$  where each $ d_i \in [e_1 +\cdots +  e_s + k] $.
	Here $ E({x_{q_1i}}) = \{ d_0,d_1,d_2,\ldots,d_{l_{1_1}}   \}  $ for $ 2 \leq i \leq w_{q_1}. $ Let $ F_{1_1} =   E_{x_{{q_1}2}}.$  Then $ F_{1_1} \in  \mathcal{E} $ with label $ \{x_{q_1i}|2 \leq i \leq w_{q_1} \} $. 
	Since  no vertex of  $ N^-_{D_1}(x_{q_1}) $ is a leaf vertex in   $D_1,$ then no vertex of   $ N^-_{D}(x_{q_1}) $ is a leaf vertex  in $D.$ Thus there does not exist any element of $X$ which lies in the generators of $ I(D)^\mathcal{P} $ corresponding to some proper subset of $ F_{1_1} $ 
	 which implies $ F_{1_1} $ is a simple edge. 
	Let us  assume $ l_{2_1},\ldots,l_{{r_1}_1} $ number of new edges are oriented towards remaining $ r_1-1 $  vertices of $ D_1 $, then similarly  we get $ F_{2_1},\ldots,F_{{r_1}_1} $ are the simple edges with cardinality $ l_{2_1} + 1,\ldots,l_{{r_1}_1} + 1 $ respectively. Thus $ |F_{j_1}|=  l_{j_1} + 1 $ for $ 1 \leq j \leq r_1 $ where $ l_{1_1}+\cdots + l_{{r_1}_1} = k_1. $ 
	Let   $  k_i   $  new    edges are oriented towards   $ r_i $   vertices of $ D_i $ for $2 \leq i \leq s$ by the definition of new edges. If we assume $ l_{1_i},\ldots,l_{{r_i}_i} $ number of new edges are oriented towards  $ r_i $  vertices of $ D_i $, then similarly  we get $ F_{1_i},\ldots,F_{{r_i}_i} $ are the simple edges with   cardinality $ l_{1_i} + 1,\ldots,l_{{r_i}_i} + 1 $ respectively for $2 \leq i \leq s,$ i.e., $ |F_{j_i}|=  l_{j_i} + 1 $ for $ 1 \leq j \leq r_i $, $2 \leq i \leq s$   where $ l_{1_i}+\cdots + l_{{r_i}_i} = k_i $ for each $i.$    
	Let $ F = F_{1_1} \cup \cdots \cup F_{{r_1}_1}\cup \cdots \cup  F_{1_s} \cup \cdots \cup F_{{r_s}_s}  $ and $ C = V \setminus F. $
	Let $V_i \subset V$ be the set of vertices corresponding to the minimal generators of $ I(D_i)^\mathcal{P} $ for $1 \leq i \leq s$ in $ H(I(D)^\mathcal{P}) $.
	$\vspace*{0.2cm}$\\
	 For $ c \in C \cap V_1 , $ let $\displaystyle{ m_c= x_{{i_1}1}\prod_{t=1}^{w_{j_1}}x_{{j_1}t}}, $ i.e., a minimal generator of $ I(D)^\mathcal{P} $ corresponding to some edge $(x_{i_1},x_{j_1})$ of $D_1$. If $ x_{j_1} $ is a leaf in both $ D_1 $ and $ D $, then $ m_c $
	is the only minimal generator of $ I(D)^{\mathcal{P}} $
	which is divisible by $ x_{{j_1}1} $  and therefore $ \{c\} \in \mathcal{E}  $ with label $ \{x_{{j_1}t}|1 \leq t \leq w_{j_1} \}. $
	In case of $ x_{j_1} $ is a leaf in $D_1$ but not in $ D $, at least one new edge  is oriented away from $ x_{j_1} ,$   then  by definition of new edges $w_{j_1} \geq 2$ and $ m_c $
	is the only minimal generator of $ I(D)^{\mathcal{P}} $
	which is divisible by $ x_{{j_1}2} .$  Therefore $ \{c\} \in \mathcal{E} $ with label $ \{x_{{j_1}t}| 2 \leq t \leq w_{j_1} \}. $
	If $ x_{j_1} $ is not a leaf in $ D_1 $, then by assumption since $ x_{j_1} $ is not a source, either $ w_{j_1} \geq   2 $ or $ x_{i_1} $
	is a
	leaf in $D_1$.  If $  x_{i_1} $
	is a leaf in $D_1,$ then $w_{i_1}=1.$ Thus none of the new edges are connected with $x_{i_1}$ and   $x_{i_1}$ is a leaf in $D,$ then  $ m_c $ is the only minimal generator of $ I(D)^{\mathcal{P}} $ 
	which is divisible by $ x_{{i_1}1} $ and  $ \{c\} \in \mathcal{E}  $ with label $ \{x_{{i_1}1}\}. $ If $ x_{i_1} $ is not a leaf in $D_1$ then $ w_{j_1} \geq 2 $ in $D_1$  and so is in $D.$ By the property $P$ of $D_1$,  at most one edge is oriented into the vertex $ x_{j_1} $ in $D_1$ and so is in $D$ because no new edge is oriented towards $x_{j_1}$.  Then  $ m_c $ is divisible by $ x_{{j_1}2} $ and none of  any other generator of $ I(D)^{\mathcal{P}} $ is  divisible by  $ x_{{j_1}2} .$  Thus $ \{c\} \in \mathcal{E}  $ with label $ \{x_{{j_1}t}| 2 \leq t \leq w_{j_1} \}. $ Therefore for every $ c \in C \cap V_1,$ $ \{c\} \in \mathcal{E} .$  By the similar arguement for every $ c \in C \cap V_i,$ $ \{c\} \in \mathcal{E} $ where $ 2 \leq i \leq s  $. So every $c \in C$ is closed.  
	Here   each of the remaining edges of $ \mathcal{E} $ is  some image   $  E({x_{p_i1}})   $ where $ x_{p_i} $ is one of the non-leaf vertex  of $D_i$ for some $i \in [s],$ $p \in [n_i]$  and it  contains either  one $ F_{{j_i}} $ for some $j \in [r_i]$  or at least one \{c\} for some $c \in  C \cap V_i  $ as a proper subset. Thus they are not simple. Therefore $F_{{j_i}} $ are the only  simple edges in the labeled hypergraph $ H(I(D)^{\mathcal{P}})  $ and  by the definition of $F_{{j_i}}$'s no two $F_{{j_i}}$'s have a common element  which implies every open vertex is contained in exactly one simple edge, i.e.,  $H(I(D)^{\mathcal{P}})$ has isolated simple edges. Hence by Lemma \ref{pol}, Proposition \ref{simple} and Proposition \ref{atmost-one}, we have
	\begin{align*}
	 \reg(R/I(D))&=\displaystyle{ |X| - |V| +    \sum_{i=1}^{r_1} (|F_{i_1}|-1) + \cdots +  \sum_{i=1}^{r_s} (|F_{i_s}|-1) }\\
	 &= \displaystyle{   \sum_{v \in V(D_1)}w(v) + \cdots   +   \sum_{v \in V(D_s)}w(v)         -(e_1 +\cdots+ e_s + k  )}\\
	 	  &\hspace*{0.35cm}+ (l_{1_1}+\cdots + l_{{r_1}_1})
+ \cdots +  (l_{1_s}+\cdots + l_{{r_s}_s})\\
   &= \displaystyle{   \sum_{v \in V(D_1)}w(v) + \cdots +     \sum_{v \in V(D_s)}w(v)       -(e_1 + \cdots +  e_s + k  ) + k_1 + \cdots + k_s}\\
   &= \displaystyle{  \sum_{v \in V(D_1)}w(v) - e_1 + \cdots +   \sum_{v \in V(D_s)}w(v) - e_s } \\
   &= \reg(R/I(D_1)) + \cdots + \reg(R/I(D_s)).
  \end{align*}

\end{proof}

\begin{corollary}\label{exteded-1}
	Let $ D $ be a weighted oriented graph having
	property $P$ with weight function w on the vertices $ {x_1, \ldots   , x_n} $. Let $ D^\prime $ be a weighted oriented graph obtained by adding $ k $ new oriented edges where each edge is of the form $ (x_i,x_j) $ for some $ x_i,x_j \in V(D) $ with $ w_i ,$  $ w_j \geq 2 $ and  no vertex of  $ N^-_{D}(x_j) $ is a leaf vertex in   $D.$   
	Then $$\displaystyle{  \reg(R/I(D^\prime))= \reg(R/I(D)) } .$$  
\end{corollary}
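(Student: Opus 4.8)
The plan is to derive this statement directly from Theorem \ref{exteded-s} by specializing to the case $s = 1$. No independent argument is needed; the only real work is the notational bookkeeping, since the symbol $D$ plays opposite roles in the two statements (in the theorem $D$ is the \emph{extended} graph, whereas in the corollary $D$ is the \emph{original} graph and $D'$ is the extended one).

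First I would set $s = 1$ in Theorem \ref{exteded-s} and check that the hypotheses of the corollary are exactly its hypotheses under the dictionary: the theorem's single graph $D_1$ becomes the corollary's $D$, and the theorem's extended graph $D$ becomes the corollary's $D'$. With $s=1$ the only allowed new edges are of the form $(x_{a_1}, x_{b_1})$ with both endpoints in $V(D_1) = V(D)$, which is precisely the case $i = j$ contemplated by the clause ``$i$ may equal $j$'' in the theorem. The weight requirement $w_{a_1}, w_{b_1} \geq 2$ matches the corollary's condition $w_i, w_j \geq 2$, and the requirement that no vertex of $N^-_{D_1}(x_{b_1})$ be a leaf in $D_1$ matches ``no vertex of $N^-_D(x_j)$ is a leaf vertex in $D$.'' Thus every edge added in the corollary is an admissible new edge in the sense of Theorem \ref{exteded-s}.

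Having verified the hypotheses, I would simply read off the conclusion. For $s = 1$ the equation
\begin{equation*}
\reg(R/I(D)) = \reg(R/I(D_1)) + \cdots + \reg(R/I(D_s))
\end{equation*}
collapses to $\reg(R/I(D)) = \reg(R/I(D_1))$, which, translating back through the dictionary above (theorem's $D \mapsto$ corollary's $D'$ and theorem's $D_1 \mapsto$ corollary's $D$), is exactly $\reg(R/I(D')) = \reg(R/I(D))$. This completes the deduction.

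There is no genuine mathematical obstacle here, since the corollary is a literal special case of the theorem already proved. The only point demanding care is the translation of notation: one must be careful not to conflate the two meanings of $D$, and one should confirm that the single-component instance still satisfies the structural facts used in the proof of Theorem \ref{exteded-s}, namely that the hyperedges $F_{j_1}$ arising from the added in-edges remain isolated simple edges of the labeled hypergraph $H(I(D')^{\mathcal{P}})$. Since those facts are established in the theorem for arbitrary $s$ and in particular for $s=1$, nothing further is required.
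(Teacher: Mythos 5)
Your proposal is correct and is exactly the paper's own proof: the paper disposes of this corollary with the single line that it ``follows from Theorem \ref{exteded-s} for $s=1$,'' and your careful notational dictionary (theorem's $D_1 \mapsto$ corollary's $D$, theorem's $D \mapsto$ corollary's $D'$) just makes that one-line deduction explicit. Nothing further is needed.
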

\begin{proof}
 The proof directly follows from Theorem \ref{exteded-s} for $s=1.$
\end{proof}

In the next two corollaries we give application of Corollary \ref{exteded-1} into some particular kind of weighted oriented graphs.\\
A graph $G$ is called a dumbbell graph if $G$ contains two cycles $C_n$ and $C_m$ of length $ n $ and $ m $ respectively joined by a path $P_r$ of length $r$ and we denote it by $C_n\cdot P_r \cdot C_m.$\\
A path or cycle is said to be naturally oriented if	
all of its edges oriented in same direction.   In a naturally oriented unicyclic graph, the cycle is naturally oriented and each edge of the tree connected with the cycle oriented away from the cycle.
A naturally oriented dumbbell graph is the union of two naturally oriented cycles and a naturally oriented path joining them.
\begin{corollary}\label{3.6}
	Let $D^{\prime} = (V(D^{\prime} ),E(D^{\prime} ),w )$ be a weighted naturally oriented dumbbell graph whose underlying graph is $G = C_n\cdot P_1\cdot C_m$ where $ C_n=x_1\ldots x_nx_1   ,P_1=x_1y_1$ and $C_m= y_1\ldots y_my_1$ with   $ w(x) \geq 2 $ for any vertex $ x .$  Then   $$ \reg(R/I(D^\prime))  \displaystyle{=\sum_{x \in V(D^\prime)}w(x) -  |E(D^{\prime} )| + 1   }.   $$
\end{corollary}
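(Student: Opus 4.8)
The plan is to realize the weighted dumbbell $D^\prime$ as a graph obtained from a weighted oriented graph with property $P$ by adjoining a single edge, so that Corollary \ref{exteded-1} applies and reduces the computation to Proposition \ref{atmost-one}. First I would fix the natural orientation explicitly: the cycle $C_n = x_1\cdots x_nx_1$ carries the edges $(x_1,x_2),\ldots,(x_{n-1},x_n),(x_n,x_1)$, the cycle $C_m = y_1\cdots y_my_1$ carries $(y_1,y_2),\ldots,(y_{m-1},y_m),(y_m,y_1)$, and the joining path $P_1$ is the single edge $(x_1,y_1)$ (the reverse orientation is symmetric). I would then observe that $D^\prime$ itself fails property $P$ at exactly one vertex: every vertex has in-degree $1$ except $y_1$, which receives both the cycle edge $(y_m,y_1)$ and the path edge $(x_1,y_1)$ and hence has in-degree $2$.

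Next I would delete the cycle edge $(y_m,y_1)$ and call the resulting graph $D$. In $D$ every vertex now has in-degree at most $1$, since this deletion drops $y_1$ back to in-degree $1$ and turns $y_m$ into a sink. Because $w(x)\ge 2$ for every vertex, the weight condition in property $P$ holds automatically at each non-leaf non-source vertex, so $D$ has property $P$. Moreover $D^\prime$ is recovered from $D$ by adding back the single edge $(y_m,y_1)$, whose endpoints satisfy $w_{y_m},w_{y_1}\ge 2$; and $N^-_D(y_1)=\{x_1\}$, where $x_1$ is not a leaf of $D$ (it has degree $3$, being adjacent to $x_2$, $x_n$ and $y_1$). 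These are precisely the hypotheses of Corollary \ref{exteded-1} with $k=1$, which yields $\reg(R/I(D^\prime))=\reg(R/I(D))$.

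Finally, since $D$ has property $P$, Proposition \ref{atmost-one} gives $\reg(R/I(D))=\sum_{x\in V(D)}w(x)-|E(D)|$. Deleting one edge leaves the vertex set unchanged and lowers the edge count by one, so $V(D)=V(D^\prime)$ and $|E(D)|=|E(D^\prime)|-1$; combining the two identities gives $\reg(R/I(D^\prime))=\sum_{x\in V(D^\prime)}w(x)-|E(D^\prime)|+1$, as claimed. The only genuine content is the verification that a single well-chosen edge deletion restores property $P$ while preserving the added-edge hypotheses of Corollary \ref{exteded-1}; once one notices that the obstruction lives entirely at the meeting vertex $y_1$ and that removing a cycle edge incident to it suffices, the checks are routine. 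I would also remark that one could instead delete the path edge $(x_1,y_1)$ and invoke Theorem \ref{exteded-s} with $s=2$ on the two disjoint naturally oriented cycles $C_n$ and $C_m$, arriving at the same total $\sum_{x\in V(D^\prime)}w(x)-(n+m)=\sum_{x\in V(D^\prime)}w(x)-|E(D^\prime)|+1$.
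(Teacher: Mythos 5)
Your proposal is correct and follows essentially the same route as the paper: delete the cycle edge $(y_m,y_1)$ at the junction vertex, observe that the resulting naturally oriented unicyclic graph has property $P$, compute its regularity by Proposition \ref{atmost-one}, and restore the deleted edge via Corollary \ref{exteded-1}; in fact you verify the hypotheses of Corollary \ref{exteded-1} (the condition that no vertex of $N_D^-(y_1)$ is a leaf) more explicitly than the paper does. Your closing remark about instead deleting the path edge $(x_1,y_1)$ and applying Theorem \ref{exteded-s} with $s=2$ is also a valid variant, but the main argument is the paper's.
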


\begin{proof}

	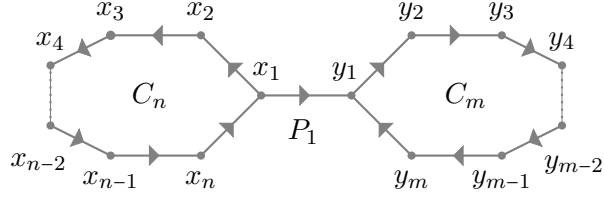
\begin{figure}[!ht]  
		\begin{tikzpicture}[scale=0.8]
		\begin{scope}[ thick, every node/.style={sloped,allow upside down}]
		\draw [fill, gray](1,1)-- node {\midarrow} (0,0.5);
		\draw [fill, gray](2.5,1)-- node {\midarrow} (1,1);
		\draw [fill, gray](3.5,0)-- node {\midarrow} (2.5,1);
		%\draw (-4,2)-- node {\midarrow} (-4,0);
		%		\draw[-,thick] (0,0.5) --(1,1);
		%		\draw[-,thick] (1,1) --(2.5,1);
		%		\draw[-,thick] (2.5,1) --(3.5,0);
		\draw [fill, gray][dotted,thick] (0.0,-0.5) --(0.0,0.5);
		\draw [fill, gray](0,-0.5) --node {\midarrow}(1,-1);
		\draw [fill, gray](1,-1) --node {\midarrow}(2.5,-1);
		\draw [fill, gray](2.5,-1) --node {\midarrow}(3.5,0);
		\draw [fill, gray][fill] (0,0.5) circle [radius=0.05];
		\draw [fill, gray][fill] (0,-0.5) circle [radius=0.05];
		\draw [fill, gray][fill] (1,1) circle [radius=0.06];
		\draw [fill, gray][fill] (2.5,1) circle [radius=0.05];
		\draw [fill, gray][fill] (3.5,0) circle [radius=0.05];
		\draw [fill, gray][fill] (1,-1) circle [radius=0.05];
		\draw [fill, gray][fill] (2.5,-1) circle [radius=0.05];
		\draw [fill, gray](3.5,0) --node {\midarrow}(5,0);
		\draw [fill, gray](5,0) --node {\midarrow}(6,1);
		%brown
		\draw[thick,gray] (6,-1) --node {\midarrow}(5,0);  
		\draw [fill, gray](6,1) --node {\midarrow}(7.5,1);
		\draw [fill, gray] [dotted,thick] (8.5,-0.5) --(8.5,0.5);
		\draw [fill, gray](7.5,1) -- node {\midarrow}(8.5,0.5);
		\draw [fill, gray](8.5,-0.5) --node {\midarrow}(7.5,-1);
		%\draw[-,thick] (7.5,-1) --node {\midarrow}(6,-1);
		\draw [fill, gray](7.5,-1) --node {\midarrow}(6,-1);
		\draw [fill, gray][fill] (6,1) circle [radius=0.05];
		\draw [fill, gray][fill] (7.5,1) circle [radius=0.05];
		\draw [fill, gray][fill] (8.5,0.5) circle [radius=0.05];
		\draw [fill, gray][fill] (8.5,-0.5) circle [radius=0.05];
		\draw [fill, gray][fill] (7.5,-1) circle [radius=0.05];
		\draw [fill, gray][fill] (6,-1) circle [radius=0.05];
		\draw [fill, gray][fill] (5,0) circle [radius=0.05];
		\node at (0,0.9) {$x_4$};
		\node at (3.6,0.4) {$x_1$};
		\node at (1,1.4) {$x_3$};
		\node at (2.5,1.4) {$x_2$};
		\node at (-0.2,-1.1) {$x_{n-2}$};
		\node at (1,-1.4) {$x_{n-1}$};
		\node at (2.5,-1.4) {$x_n$};
		\node at (8.5,0.9) {$y_4$};
		\node at (4.9,0.4) {$y_1$};
		\node at (6,1.4) {$y_2$};
		\node at (7.5,1.4) {$y_3$};
		\node at (8.7,-1.15) {$y_{m-2}$};
		\node at (6,-1.4) {$y_{m}$};
		\node at (7.5,-1.4) {$y_{m-1}$};
		\node at (1.65,0) {$C_n$};
		\node at (6.9,0) {$C_m$};
		\node at (4.2,-0.6) {$P_1$};
		\end{scope}
		\end{tikzpicture}
		\caption{Weighted naturally oriented Dumbbell graph($G= C_n \cdot P_1 \cdot C_m$).}
		\label{fig.2}
	\end{figure}
	Here $ V(D^\prime)=\{x_1,\ldots,x_n,y_1,\ldots,y_m\} .$	
Without loss of generality we give orientation to $ D^\prime $ as shown in Figure \ref{fig.2}.
Let $ D = D^\prime\setminus e    $ where $ e=$ $(y_m,y_1). $ Since $ D $ is a weighted naturally oriented unicyclic graph, it has property $P.$ Thus by Proposition  \ref{atmost-one}, we have $ \reg(R/I(D))= \displaystyle{\sum_{x \in V(D)}w(x) -  |E(D )|} = \displaystyle{\sum_{x \in V(D^{\prime})}w(x)} - ( |E(D^{\prime} )| - 1 ).$  By adding  the oriented  edge $e$ to $ D $ we get $ D^\prime  .$ Hence 
by Corollary \ref{exteded-1}, $ \reg(R/I(D^\prime))= \reg(R/I(D))  \displaystyle{=\sum_{x \in V(D^\prime)}w(x) -  |E(D^{\prime} )| + 1   }. $  
 \end{proof}

\begin{remark}
	Similarly we can find the regularity of edge ideal of weighted naturally oriented dumbbell graph  when the two naturally oriented  cycles are joined by a naturally oriented path of length $ r $ for $ r \geq 2 . $
\end{remark}

\begin{corollary}\label{4.1}
	Let $ D $  be a weighted naturally oriented cycle  whose underlying graph is $C_n=x_1\ldots x_nx_1$  with $ w(x) \geq 2$ for any vertex $ x.$ Let $ D_k $ be a weighted oriented graph we get after addition of $k$ diagonals  in any direction  to $ D $ for $1 \leq k \leq   \binom{n}{2} - n  $ and here $ D_{\binom{n}{2} - n} $ is a weighted oriented complete graph. Then for each $k,$
	$$ \reg(R/I(D_k))= \reg(R/I(D))  = \displaystyle{\sum_{i=1}^n{w_i} - n  }. $$    
\end{corollary}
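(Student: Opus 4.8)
The plan is to realise $D_k$ as the cycle $D$ with $k$ diagonals adjoined and then invoke Corollary \ref{exteded-1}. First I would verify that $D$ itself has property $P$. In the naturally oriented cycle $x_1\to x_2\to\cdots\to x_n\to x_1$ every vertex has exactly one incoming edge, so at most one edge is oriented into each vertex; moreover every vertex has degree $2$, so $D$ has no leaves and no sources, and the hypothesis $w(x)\geq 2$ guarantees $w_j\geq 2$ for every (non-leaf, non-source) vertex $x_j$. Hence $D$ has property $P$, and since $|E(D)|=n$, Proposition \ref{atmost-one} gives
$$\reg(R/I(D))=\sum_{i=1}^{n}w_i-|E(D)|=\sum_{i=1}^{n}w_i-n.$$

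Next I would check that adjoining the $k$ diagonals meets the hypotheses of Corollary \ref{exteded-1}. A diagonal joins two non-adjacent vertices of the cycle, hence is genuinely a new edge $(x_i,x_j)$; because every vertex has weight at least $2$, we have $w_i,w_j\geq 2$ no matter how the diagonal is oriented. The only remaining requirement is that no vertex of $N^-_{D}(x_j)$ be a leaf in $D$, and this is vacuous since $D$, being a cycle, has no leaves at all. Thus all $k$ diagonals satisfy the conditions of Corollary \ref{exteded-1} simultaneously, and that corollary yields $\reg(R/I(D_k))=\reg(R/I(D))=\sum_{i=1}^{n}w_i-n$ for every $k$ in the stated range. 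For the extreme value $k=\binom{n}{2}-n$ I would simply note that once all $\binom{n}{2}-n$ diagonals are inserted, together with the $n$ edges of the cycle the graph carries all $\binom{n}{2}$ possible edges, so its underlying graph is $K_n$; the formula persists here as a special case.

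The step I expect to demand the most care is confirming that the in-neighbourhood condition of Corollary \ref{exteded-1} refers to $N^-_{D}$ in the original cycle and not in $D_k$. This is precisely what makes the orientation of the diagonals irrelevant and lets them be added \emph{in any direction}: the diagonals may create new incoming edges in $D_k$, but the relevant predecessors in the cycle $D$ are never leaves, so no obstruction arises. It is worth emphasising that $D_k$ itself does \emph{not} have property $P$ once diagonals are present (vertices acquire several incoming edges), so the content of the statement is exactly that the regularity remains pinned to that of the cycle; everything beyond the two verifications above is a direct substitution into the already-established formulas.
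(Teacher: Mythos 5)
Your proposal is correct and follows essentially the same route as the paper's own proof: verify that the naturally oriented cycle $D$ has property $P$, apply Proposition \ref{atmost-one} to get $\reg(R/I(D))=\sum_{i=1}^n w_i - n$, and then invoke Corollary \ref{exteded-1} (the $s=1$ case of Theorem \ref{exteded-s}) to conclude that adding the diagonals does not change the regularity. Your extra checks --- that the leaf condition on $N^-_D(x_j)$ is vacuous for a cycle and that the weight hypothesis holds regardless of diagonal orientation --- are exactly the (implicit) verifications the paper's shorter proof relies on.
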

\begin{proof}
	Here $V(D_k)= V(D)=\{x_1,\ldots,x_n\} $ for each $k$.	Since $ D $ is a weighted naturally oriented cycle, 
	 it has property $P$. Thus by Proposition \ref{atmost-one}, $ \reg(R/I(D)) = \displaystyle{\sum_{i=1}^n{w_i} - n}  .$  Here $D_k$ is obtained by adding $k$ diagonals  with any direction  to $ D $ for $1 \leq k \leq   \binom{n}{2} - n.$ Hence by Corollary \ref{exteded-1}, we have $  \reg(R/I(D_k))= \reg(R/I(D))  = \displaystyle{\sum_{i=1}^n{w_i} - n  } $  for each $k.$

\end{proof}
As some application of Theorem \ref{exteded-s}, we derive the formulas for regularity of edge ideals of some weighted oriented graphs whose underlying graphs are the join of two cycles and complete $ m $-partite graph.
\vspace*{0.2cm}\\
The join of two simple graphs $ G_1 $ and $ G_2 $, denoted by  $ G_1 * G_2 $ is a graph on the vertex set $ V(G_1) \sqcup  V(G_2) $ and  edge set   $ E(G_1) \cup E(G_2) $ together with all the edges joining $V(G_1)$ and $V(G_2).$
\vspace*{0.2cm}\\  
A graph $ G $  is   $m-$partite graph if 
$V(G)$ can be partitioned into $m$ independent sets $ V_1 , \ldots,V_m  ,$ in such a way that any
edge of graph $ G $ connects vertices from different subsets.\\
A complete $m-$partite graph is a $m-$partite graph such that there is an edge between
every pair of vertices that do not belong to the same independent set.\\   
\begin{corollary}\label{4.4}
	Let $ D_1 $ and $ D_2 $ be two  weighted naturally oriented cycles whose   underlying graphs are  $C_n = x_1\ldots x_nx_1$  and  $C_m = y_1\ldots y_my_1$ respectively  with $ w(v) \geq 2 $ for any vertex $ v .$
	Let $ D^\prime_k $ be a weighted oriented graph we get after addition of $k$ oriented edges joining $V(G_1)$ and $V(G_2)$  in any direction between $ D_1 $ and $D_2$ for $1 \leq k \leq  mn  $ and here  $ D^\prime_{mn} $ is a weighted oriented graph whose   underlying graph  is $ C_n * C_m  .$  Then for each $k,$
$$ \reg(R/I(D^\prime_k)) = \reg(R/I(D_1)) + \reg(R/I(D_2)) = \displaystyle{ \sum^n_{i=1}w_{x_i} + \sum^m_{i=1}w_{y_i} -  (n+m) .  }   $$            
\end{corollary}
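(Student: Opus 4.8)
The plan is to realize this as a direct instance of Theorem \ref{exteded-s} with $s = 2$, taking the two weighted naturally oriented cycles $D_1$ and $D_2$ as the building blocks and the $k$ cross edges as the new edges. First I would check that $D_1$ and $D_2$ each have property $P$. Since $C_n$ and $C_m$ are cycles, every vertex has degree $2$, so neither graph has a leaf or a source; in particular every vertex is a non-leaf, non-source vertex, and because each cycle is naturally oriented there is exactly one edge oriented into each vertex. As $w(v) \geq 2$ for every vertex, the requirement that $w_j \geq 2$ for all non-leaf, non-source vertices is satisfied, so both $D_1$ and $D_2$ have property $P$.

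Next I would verify that the $k$ added edges meet the hypotheses of Theorem \ref{exteded-s}. Each new edge joins a vertex of $V(G_1)$ to a vertex of $V(G_2)$ in either direction, so it is of the form $(x_{a_i}, x_{b_j})$ with $i \neq j$, which is allowed by the theorem. The weight condition $w_{a_i}, w_{b_j} \geq 2$ holds because every vertex of $D_1$ and $D_2$ has weight at least $2$. Finally, for any target vertex $x_{b_j}$ of a new edge, its in-neighbourhood $N^-_{D_j}(x_{b_j})$ inside its own cycle consists of the unique predecessor on that cycle, which again has degree $2$ and hence is not a leaf; thus no vertex of $N^-_{D_j}(x_{b_j})$ is a leaf in $D_j$. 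All hypotheses of Theorem \ref{exteded-s} are therefore satisfied.

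With the hypotheses in place, I would compute the two summands via Proposition \ref{atmost-one}. Because $|E(D_1)| = n$ and $|E(D_2)| = m$, this gives $\reg(R/I(D_1)) = \sum_{i=1}^n w_{x_i} - n$ and $\reg(R/I(D_2)) = \sum_{i=1}^m w_{y_i} - m$. Applying Theorem \ref{exteded-s} with $s = 2$ then yields
$$\reg(R/I(D^\prime_k)) = \reg(R/I(D_1)) + \reg(R/I(D_2)) = \sum_{i=1}^n w_{x_i} + \sum_{i=1}^m w_{y_i} - (n+m),$$
which is the claimed formula, valid for every $k$ with $1 \leq k \leq mn$, and in particular for $k = mn$, where the underlying graph is $C_n * C_m$.

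The main point to be careful about, rather than a genuine obstacle, is confirming that the structural conditions of Theorem \ref{exteded-s} are automatic here: the absence of leaves in a cycle is exactly what makes both the ``no leaf in $N^-$'' condition and property $P$ hold without extra work, while the uniform hypothesis $w(v) \geq 2$ supplies the weight conditions on the new edges. Once these are checked, the corollary follows immediately from the main theorem.
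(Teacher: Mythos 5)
Your proposal is correct and follows exactly the paper's own route: establish property $P$ for the two naturally oriented cycles, compute $\reg(R/I(D_1))$ and $\reg(R/I(D_2))$ via Proposition \ref{atmost-one}, and apply Theorem \ref{exteded-s} with $s=2$. In fact you verify the hypotheses of Theorem \ref{exteded-s} (the weight conditions and the ``no leaf in $N^-_{D_j}(x_{b_j})$'' condition) more explicitly than the paper does, which only strengthens the argument.
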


\begin{proof}
	Here $ V(D^\prime_k)=V(D_1)  \cup V(D_2) = \{x_1,\ldots,x_n,y_1,\ldots,y_m\} $  for $1 \leq k \leq mn$. Since $ D_1 $ and $D_2$ are  weighted naturally oriented cycle, they have property $P$. Thus
	by Proposition \ref{atmost-one}, $ \reg(R/I(D_1)) = \displaystyle{\sum^n_{i=1}w_{x_i} - n}  $ and  $ \reg(R/I(D_2)) = \displaystyle{\sum^m_{i=1}w_{y_i} - m}  $. Here  $ D^\prime_k $ is obtained by  adding $k$  new oriented    edges joining  $V(C_n)$ to $V(C_m)$ in any direction  between $ D_1 $ and $ D_2   $   for $1 \leq k \leq mn.$ Hence  by Theorem \ref{exteded-s} for $s=2$, we have  $ \reg(R/I(D^\prime_k)) =\reg(R/I(D_1)) + \reg(R/I(D_2)) = \displaystyle{ \sum^n_{i=1}w_{x_i} + \sum^m_{i=1}w_{y_i}     }    -  (n+m)  $  for each $k.$ 
\end{proof}
In the following corollary, we give a short proof of \cite[Theorem 5.1]{zhu-3} using Theorem \ref{exteded-s}.
\begin{corollary}
Let   $ D = (V(D), E(D), w) $ is a
weighted oriented complete $ m $-partite graph for   $ m \geq  3 $ with vertex set $ V(D) = \displaystyle{ \bigsqcup_{i=1}^m V_i  }  $ and edge set
$ E(D) = \displaystyle{ \bigsqcup_{i=1}^m E(D_i ) }    $   where $ D_i $
is a weighted oriented complete bipartite graph  on $ V_i \sqcup V_{i+1} $ and every edge of  $ E(D_i) $ is of the form $(u,v)$ with $u \in V_i$, $v \in V_{i+1}$  for
 $ 1 \leq  i \leq m $ by setting
$ V_{m+1} = V_1 $. 
If $ w(x) \geq  2 $ for all $ x \in V(D)  $, then
$$ \reg(R/I(D))= \displaystyle{\sum_{x \in V(D)}w(x) -  |V(D )|} .$$

\end{corollary}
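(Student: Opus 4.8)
The plan is to realize $D$ as a weighted oriented graph obtained from a single graph with property $P$ by adjoining edges of exactly the type permitted in Corollary \ref{exteded-1} (the $s=1$ case of Theorem \ref{exteded-s}), and then read off the regularity from Proposition \ref{atmost-one}. The first observation I would record is that, since $w(x)\geq 2$ for every vertex, the second clause in the definition of property $P$ holds automatically; hence a spanning subgraph of $D$ has property $P$ as soon as every vertex has in-degree at most one.

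Next I would build the correct spanning subgraph. Fix one vertex $a_i\in V_i$ for each $i\in\{1,\dots,m\}$, to be called a hub, and let $D^{(0)}$ be the spanning sub-digraph of $D$ consisting of the directed cycle $a_1\to a_2\to\cdots\to a_m\to a_1$ together with, for every non-hub vertex $v\in V_{i+1}$, the single edge $a_i\to v$ (indices read mod $m$, with $V_{m+1}=V_1$). Every one of these is a genuine edge of $D$, since each runs from some $V_i$ to $V_{i+1}$. By construction each vertex of $D$ receives exactly one incoming edge in $D^{(0)}$ (a cycle edge if it is a hub, an attachment edge otherwise), so $D^{(0)}$ has in-degree one at every vertex. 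Consequently $|E(D^{(0)})|=|V(D)|$, the graph $D^{(0)}$ is weakly connected, and it has property $P$; Proposition \ref{atmost-one} then yields $\reg(R/I(D^{(0)}))=\sum_{x\in V(D)}w(x)-|E(D^{(0)})|=\sum_{x\in V(D)}w(x)-|V(D)|$.

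I would then add the remaining edges back. Because every out-edge of a hub $a_i$ already lies in $D^{(0)}$ (these are precisely all edges from $a_i$ into $V_{i+1}$), the edges of $D$ not in $D^{(0)}$ are exactly those $(u,v)$ with $u\in V_i\setminus\{a_i\}$ and $v\in V_{i+1}$. For such an edge one has $w_u,w_v\geq 2$, and in $D^{(0)}$ the target $v$ has the single in-neighbor $a_i$, a hub, since every out-edge of $D^{(0)}$ emanates from a hub. A hub is never a leaf of $D^{(0)}$, as it lies on the cycle and so $\deg_{D^{(0)}}(a_i)\geq 2$; thus no vertex of $N^-_{D^{(0)}}(v)$ is a leaf of $D^{(0)}$. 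Hence $D$ arises from $D^{(0)}$ by adjoining edges of precisely the form allowed in Corollary \ref{exteded-1}, which gives $\reg(R/I(D))=\reg(R/I(D^{(0)}))=\sum_{x\in V(D)}w(x)-|V(D)|$, as required.

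The crux of the argument is the choice of $D^{(0)}$: one needs a spanning subgraph whose every vertex has in-degree exactly one, so that its edge count is forced to be $|V(D)|$ and Proposition \ref{atmost-one} delivers the stated formula, while simultaneously arranging that the in-neighbor of each vertex is a non-leaf, so that every discarded edge satisfies the hypotheses of Corollary \ref{exteded-1}. Routing every incoming edge out of a single hub cycle achieves both requirements at once, because hub vertices automatically have degree at least two. The surrounding checks, namely that each chosen edge genuinely lies in $D$, that all weights are $\geq 2$, and that $D^{(0)}$ has property $P$, are then routine consequences of $m\geq 3$ and $w\geq 2$.
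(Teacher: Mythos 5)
Your proof is correct, and it follows the same two-step strategy as the paper: build a spanning sub-digraph of $D$ in which every vertex has in-degree exactly one (so that, all weights being at least $2$, property $P$ holds and the number of edges equals $|V(D)|$), compute its regularity by Proposition \ref{atmost-one}, and then restore the deleted edges by the edge-addition result. The genuine difference lies in the choice of spanning subgraph: the paper pairs the vertices of $V_i$ with those of $V_{i+1}$ by index (routing the excess vertices of $V_{i+1}$, when $n_i<n_{i+1}$, to the first vertex of $V_i$), which produces a possibly disconnected graph $D'$, so the full multi-component Theorem \ref{exteded-s} must be invoked; your hub-cycle $D^{(0)}$ is weakly connected, so the single-graph case, Corollary \ref{exteded-1}, suffices. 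Your construction buys two small simplifications: the hypothesis that no in-neighbor of a target vertex is a leaf is verified transparently (every edge of $D^{(0)}$ emanates from a hub, and a hub has degree at least $2$ on the cycle since $m\geq 3$) --- a condition the paper relies on but never explicitly checks for its $D'$ --- and you avoid the case distinction between $n_i<n_{i+1}$ and $n_i\geq n_{i+1}$ entirely.
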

\begin{proof}
Let $V_i =  \{ {x_1}_i,{x_2}_i,\ldots, {x_{n_i}}_i \}    $ for $ 1 \leq i \leq m$.
  For $ 1 \leq i \leq m$, let $D^\prime_i$ be the  oriented graph over  vertex set $V(D^\prime_i) = V_i \sqcup V_{i+1} $ and the edge set $E(D^\prime_i) = \{ ({x_1}_i,{x_1}_{i+1}), ({x_2}_i,{x_2}_{i+1}),\\\ldots,({x_{n_i}}_i,{x_{n_i}}_{i+1})     \} \cup  \{({x_1}_i,{x_{n_i + 1}}_{i+1}), ({x_1}_i,{x_{n_i + 2}}_{i+1}) ,\ldots,({x_1}_i,{x_{n_{i+1}}}_{i+1})  \}    $ if $n_i < n_{i+1} $
or the edge set  $E(D^\prime_i) = \{ ({x_1}_i,{x_1}_{i+1}) ,({x_2}_i,{x_2}_{i+1}),\ldots,({x_{n_{i+1}}}_i,{x_{n_{i+1}}}_{i+1})     \}          $ if $n_i \geq  n_{i+1} $.\\
Let   $ D^\prime = (V(D^\prime), E(D^\prime), w) $ be the
weighted oriented $ m- $partite graph  over the vertex set $ V(D^\prime) = \displaystyle{ \bigsqcup_{i=1}^m V_i  }  $ and the edge set
$ E(D^\prime) = \displaystyle{ \bigsqcup_{i=1}^m E(D^\prime_i ) }    $ with the same weight function as in $D$. Observe that in each $D^\prime_i$, there is exactly one edge oriented into each vertex of $V_{i+1}$  which implies in $D^\prime,$ exactly one edge oriented into each vertex of $ V(D^\prime) $. Thus each component of $D^\prime$ is with property $P$. Hence by Proposition \ref{atmost-one}, $  \reg(R/I(D^\prime))= \displaystyle{\sum_{x \in V(D^\prime)}w(x) -  |E(D^\prime )|}  =  \displaystyle{\sum_{x \in V(D^\prime)}w(x) -  |V(D^\prime )|}  = \displaystyle{\sum_{x \in V(D)}w(x) -  |V(D )|} .$   Here $D$ is obtained by adding all the  edges of the set $E(D)\setminus E(D^\prime)$ to $D^\prime$.  If there is $s$ components  in $D^\prime$ for some $s \geq 1$, then by Theorem \ref{exteded-s}, we have $$ \reg(R/I(D))= \reg(R/I(D^\prime))= \displaystyle{\sum_{x \in V(D)}w(x) -  |V(D )|} .$$    

\end{proof}  

In the following theorem, we show that the regularity of edge ideal of a weighted oriented graph $D$ with property $P$ remains same even after adding certain type of   edges  from new vertices  oriented towards a single vertex of  it.    

\begin{theorem}\label{exteded-k}
	Let $ D $ be a weighted oriented graph having
	 property $P$ with weight function $ w $ on the vertices $ {x_1, \ldots   , x_n} $.
		Let $ D^\prime_k $ be a weighted oriented graph after adding $ k $ new oriented edges to $ D $ at $x_p$ with $ w_p \geq 2 $ for a fixed $ p \in [n] $    where each edge is of the form  $  (x_{n+i},x_p) $ for some $ i \in [k] $ and each $ x_{n+i} $ is a new vertex. Then 
	$$\displaystyle{  \reg(I(D^\prime_k))= \reg(I(D)) = \sum_{i=1}^{n} w_i -|E(D)|} + 1  .$$ 
\end{theorem}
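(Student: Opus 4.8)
The plan is to induct on $k$, peeling off one of the new edges at each step by means of a Betti splitting. First observe that the second equality is immediate: since $D$ has property $P$, Proposition \ref{atmost-one} gives $\reg(R/I(D))=\sum_{i=1}^n w_i-|E(D)|$, and $\reg(I(D))=\reg(R/I(D))+1$. So it suffices to prove $\reg(I(D^\prime_k))=\reg(I(D))$, the base case $k=0$ being $D^\prime_0=D$. For the inductive step I would write $I(D^\prime_k)=J+K$ with $J=(x_{n+k}x_p^{w_p})$ and $K=I(D^\prime_{k-1})$; since $x_{n+k}$ is a new vertex, $\mathcal G(J)$ consists of all generators of $I(D^\prime_k)$ divisible by $x_{n+k}$, and $J$ is principal, hence has a linear resolution. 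By Theorem \ref{betti.1} this is a Betti splitting, so Corollary \ref{betti.2} yields
\[
\reg(I(D^\prime_k))=\max\{\reg(J),\ \reg(K),\ \reg(J\cap K)-1\}.
\]

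Here $\reg(J)=w_p+1$ and, by the inductive hypothesis, $\reg(K)=\reg(I(D^\prime_{k-1}))=\reg(I(D))$, so the content lies in the intersection term. Since $x_{n+k}$ divides no generator of $K$, one has $J\cap K=x_{n+k}x_p^{w_p}\cdot\big((I(D):x_p^{w_p})+(x_{n+1},\ldots,x_{n+k-1})\big)$, and a direct check shows that $(I(D):x_p^{w_p})$ involves none of the variables $x_p,x_{n+1},\ldots,x_{n+k}$. Hence the monomial $x_{n+k}x_p^{w_p}$ and the bracketed ideal have disjoint supports, and Lemmas \ref{reg.3} and \ref{reg.2} give $\reg(J\cap K)=(w_p+1)+\reg\big((I(D):x_p^{w_p})\big)$ (the linear ideal $(x_{n+1},\ldots,x_{n+k-1})$, present only for $k\ge 2$, contributes regularity $1$ in disjoint variables). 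Thus the theorem reduces to the two inequalities $w_p+1\le\reg(I(D))$ and $w_p+\reg\big((I(D):x_p^{w_p})\big)\le\reg(I(D))$.

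The first inequality is combinatorial: by Proposition \ref{atmost-one} it is equivalent to $|E(D)|\le\sum_{i\neq p}w_i$. Because $w_p\ge 2$ the vertex $x_p$ is not a source, and property $P$ forces every vertex to have in-degree at most one, so $|E(D)|$ equals the number of non-source vertices of $D$. If $D$ has a source, this number is at most $n-1\le\sum_{i\neq p}w_i$; otherwise $D$ contains a directed cycle, and property $P$ applied to its (non-leaf, non-source) cycle vertices forces each to have weight at least $2$, producing a vertex other than $x_p$ of weight $\ge 2$ and again giving the bound.

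The \emph{main obstacle} is the second inequality, i.e.\ controlling $\reg\big((I(D):x_p^{w_p})\big)$. Writing $x_a$ for the unique in-neighbour of $x_p$ (it exists, since $x_p$ is not a source) and $N_D^+(x_p)=\{x_{b_1},\ldots,x_{b_t}\}$ for its out-neighbours, one computes
\[
(I(D):x_p^{w_p})=(x_a)+(x_{b_1}^{w_{b_1}},\ldots,x_{b_t}^{w_{b_t}})+I(D\setminus x_p).
\]
To estimate its regularity I would first split off the variable $x_a$ (again a Betti splitting via Theorem \ref{betti.1}), reducing to $(x_{b_1}^{w_{b_1}},\ldots,x_{b_t}^{w_{b_t}})+I(D\setminus\{x_a,x_p\})$, then polarize (Lemma \ref{pol}) and examine the labeled hypergraph. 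Since $D\setminus\{x_a,x_p\}$ again satisfies property $P$, its polarization has isolated simple edges, and the pure powers $x_{b_j}^{w_{b_j}}$ only contribute closed vertices; I therefore expect the resulting hypergraph to still have isolated simple edges, so that Proposition \ref{simple} computes $\reg\big((I(D):x_p^{w_p})\big)$ exactly, after which comparison with $\reg(I(D))=\sum_i w_i-|E(D)|+1$ should close the argument. The delicate point — and where I expect the real work — is the bookkeeping forced by deleting $x_p$: the out-neighbours $x_{b_j}$ become sources (their weights drop to $1$) and some neighbours may become isolated, so the naive weight count must be corrected before it can be matched against $\reg(I(D))$.
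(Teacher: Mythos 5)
Your reduction coincides with the paper's own proof: induction on $k$, the Betti splitting $I(D^\prime_k)=J+K$ with $J$ the generator of the new edge and $K=I(D^\prime_{k-1})$ (the paper does this after polarizing, taking $p=n$), the factorization $J\cap K=J\cdot\bigl((I(D):x_p^{w_p})+(x_{n+1},\ldots,x_{n+k-1})\bigr)$, and Lemmas \ref{reg.3} and \ref{reg.2} to obtain $\reg(J\cap K)=w_p+1+\reg\bigl((I(D):x_p^{w_p})\bigr)$; this part is correct, and your combinatorial proof of $w_p+1\le\reg(I(D))$ is also correct (indeed more explicit than the paper, where this inequality is implicit; note it also follows from the second inequality since $\reg\bigl((I(D):x_p^{w_p})\bigr)\ge 1$).

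The genuine gap is the second inequality, $w_p+\reg\bigl((I(D):x_p^{w_p})\bigr)\le\reg(I(D))$, i.e.\ $\reg(J\cap K)-1\le\reg(K)$, which you explicitly defer (``where I expect the real work''). This is exactly where the entire content of the paper's proof lies, and two things are missing. First, your assertion that the polarization of $(x_{b_1}^{w_{b_1}},\ldots,x_{b_t}^{w_{b_t}})+I(D\setminus\{x_a,x_p\})$ has a labeled hypergraph with isolated simple edges is only an expectation; verifying that every generator is a closed vertex itself uses property $P$ nontrivially --- e.g.\ for an edge $(x_{b_i},v)$ of $D\setminus\{x_a,x_p\}$ with $v$ not a leaf there, one needs $w_v\ge 2$, which holds only because $x_{b_i}$ is not a leaf of $D$ and property $P$ then forces it. Second, and more seriously, even granting the exact value $|X|-|V|+1$ from Proposition \ref{simple}, the comparison with $\reg(I(D))=\sum_i w_i-|E(D)|+1$ is not a routine bookkeeping correction: after cancellation it reduces to an inequality concentrated on the closed neighbourhood of the in-neighbour $x_a$ of $x_p$, and it is true only because property $P$ pins down the weights there. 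One must distinguish the paper's Cases A-I, A-II and B: whether $x_a$ has an in-neighbour $z$; whether $z$ is an out-neighbour of $x_p$ (which forces $w_a\ge 2$); whether $z$ is a leaf with $w_a=1$ (where the needed slack comes from $z$ becoming isolated after the deletion); together with the facts that non-leaf out-neighbours of $x_a$ have weight $\ge 2$ while its leaf out-neighbours become isolated. The paper's inequality $\sum_{x\in V_2}w(x)\ge (t-p)+(t+r+2)$ (resp.\ $(t+r+1)$ in Case B) encodes precisely this analysis, and without it the maximum in Corollary \ref{betti.2} cannot be identified with $\reg(K)$. So your proposal sets up the correct frame but stops short of the step that actually proves the theorem.
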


\begin{proof} 
	
Here $ V(D) = \{ x_1,\ldots ,x_n\} $.   Without loss of generality let $x_p = x_n$.  
We prove this theorem by applying induction on the number of new oriented edges added to $D$ at $x_n$.\\ 
 Base case: If  $ k=0 $, then the proof follows trivially.\\ 
 For $k \geq 1$, let $ D^\prime_k $ be a weighted oriented graph after adding the $k$ new oriented edges $  (x_{n+1},x_n)  ,(x_{n+2},x_n),$\ldots,$  (x_{n+k},x_n)  $ from new vertices   to  $x_n$ in $D$ where $ w_n \geq 2 $.     
Here $ I(D^\prime_k) = I(D^\prime_{k-1}) + x_{n+k}x_n^{w_n}          $
where     $D^\prime_{k-1}=D^\prime_k\setminus\{x_{n+k}\} $. Then $ I(D^\prime_k)^{\mathcal{P}} = I(D^\prime_{k-1})^{\mathcal{P}} + \displaystyle{ x_{n+k,1}\prod_{j=1}^{w_n}x_{nj}.} $ Note that in $ D^\prime_{k-1} ,$   there are $ k-1 $ new oriented edges added to $ D $ at $x_n$.   
Let $\displaystyle{ J =     x_{n+k,1}\prod_{j=1}^{w_n}x_{nj}}$ and $ K = I(D^\prime_{k-1})^{\mathcal{P}}. $ Since $ J $ has linear resolution, $ I(D^\prime_k)^{\mathcal{P}} = J + K $ is a Betti splitting. Here $\reg(J)   = w_n + 1  .$ By Lemma \ref{pol}, Proposition \ref{atmost-one} and induction hypothesis, we have $ \reg(K) =\reg(I(D^\prime_{k-1})) =\reg(I(D)) = \displaystyle{\sum_{x \in V(D)}} w(x) $ $  -|E(D)| + 1.$ Now we want to compute $\reg(J \cap K)-1$.\\ 
\begin{figure}[!h]
	\begin{tikzpicture}[scale=0.9]
	\begin{scope}[ thick, every node/.style={sloped,allow upside down}]
	\definecolor{umber}{rgb}{0.39, 0.32, 0.28}
    \definecolor{taupe}{rgb}{0.28, 0.24, 0.2}
    \definecolor{usccardinal}{rgb}{0.6, 0.0, 0.0}
	\draw[dotted,thick] (3.6,0.5) --(3.9,0.9);
	\draw[dotted,thick] (3.5,-0.25) --(3.7,-0.7);
	\draw[dotted,thick] (6.3,-1.2) --(6.6,-1.9);
	\draw[dotted,thick] (6.6,-2.6) --(6.4,-3);
	\node at (2.9,-1.3) {$x_{n_s}$};
	\node at (2.6,0.7) {$x_{n_r}$};
	\node at (2.4,-0.3) {$x_{n_{r+1}}$};
	\node at (3.6,1.6) {$x_{n_2}$};
	\node at (4.6,2) {$x_{n_1}$};
	\draw (7.5,0.5) --node {\midarrow}(5,0);
	\draw (5,-2) --node {\midarrow}(5,0);  
	\draw (5,0) --node {\midarrow}(4.5,1.7);
	\draw (5,0) --node {\midarrow}(3.7,1.3);  
	\draw (5,0) --node {\midarrow}(3,-0.2);
	\draw (5,0) --node {\midarrow}(3.3,-1.2);
	\draw (5,0) --node {\midarrow}(3,0.5);
	\draw [fill] (7.5,0.5) circle [radius=0.05];
	\draw [fill] (5,0) circle [radius=0.05];
	\draw [fill] (5,-2) circle [radius=0.05];
	\draw [fill] (4.5,1.7) circle [radius=0.05];
	\draw [fill] (3.7,1.3) circle [radius=0.05];
	\draw [fill] (3,-0.2) circle [radius=0.05];
	\draw [fill] (3.3,-1.2) circle [radius=0.05];
	\draw [fill] (3,0.5) circle [radius=0.05];
	\node at (6.4,-0.4) {$x_{{n-1}_1}$};
	\node at (7.2,-0.9) {$x_{{n-1}_2}$};
	\node at (7.7,-1.9) {$x_{{n-1}_p}$};  
	\node at (7.9,-2.7) {$x_{{n-1}_{p+1}}$};
	\node at (7.7,-3.7) {$x_{{n-1}_t}$};
	\node at (4.4,-2) {$x_{{n-1}}$};
	\node at (3.9,-3.9) {$x_{{n-2}}$};
	\draw (5,-2) --node {\midarrow}(5.8,-0.5);
	\draw (5,-2) --node {\midarrow}(6.5,-0.9);  
	\draw (5,-2) --node {\midarrow}(7,-1.9);
	\draw (5,-2) --node {\midarrow}(7,-3.7);
	\draw (5,-2) --node {\midarrow}(7,-2.6);
	\draw[thick, usccardinal] (4.5,-3.8) --node {\midarrow}(5,-2);    
	\draw [fill] (5.8,-0.5) circle [radius=0.05];
	\draw [fill] (6.5,-0.9) circle [radius=0.05];
	\draw [fill] (7,-1.9) circle [radius=0.05];
	\draw [fill] (7,-3.7) circle [radius=0.05];
	\draw [fill] (7,-2.6) circle [radius=0.05];
	\draw [fill] (7.3,1.1) circle [radius=0.05];
	\draw [fill] (6.8,1.8) circle [radius=0.05];
	\draw [fill, usccardinal] (4.5,-3.8) circle [radius=0.05];    
	\node at (8.2,0.5) {$x_{n+1}$};
	\node at (8,1.2) {$x_{n+2}$};
    \node at (7.3,2.2) {$x_{n+k}$};  
	\node at (5.2,0.6) {$x_n$};  
	
	%\draw [thick, gray] (1,1)-- (1,2) ;
	\draw (7.3,1.1) --node {\midarrow}(5,0); 
	\draw (6.8,1.8) --node {\midarrow}(5,0);
	\draw[dotted,thick] (6.7,0.9) --(6.4,1.3);       
	\end{scope}
	\end{tikzpicture}
	\caption{Neighbourhood of $x_{n-1}$ and $x_n$ in weighted oriented graph $D^\prime_k.$}  
\end{figure}
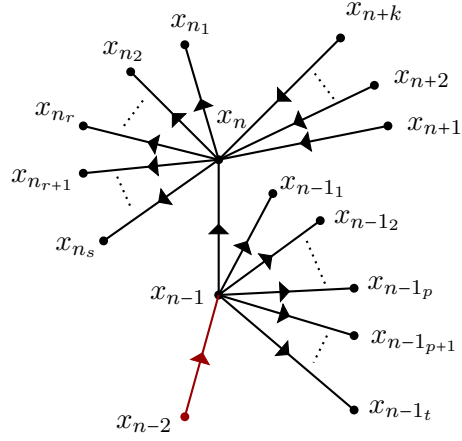
Let  $N_{D^\prime_k}^- (x_n) = \{ x_{{n-1}}, x_{{n+1}}, x_{{n+2}},\ldots, x_{{n+k}} \} $ where   $x_{n-1} \in V(D)$ and $ x_{{n+1}}, x_{{n+2}},\ldots, x_{{n+k}} $ are the new vertices in $D^\prime_{k}$.       
Let  $ N_{D}^+ (x_n)  = \{x_{{n_1}},x_{{n_2}},\ldots,x_{{n_r}},x_{n_{r+1}},\ldots,x_{n_s} \} $  among which $ w_{n_i} = 1$ for $ 1 \leq i \leq r $ and  $ w_{n_i} \geq 2 $ for $ r+1 \leq i \leq s $ in $D$. Let   $ N_{D}^+( x_{n-1} ) = \{x_n,x_{{n-1}_1},x_{{n-1}_2},\ldots,x_{{n-1}_p},x_{{n-1}_{p+1}},\ldots,x_{{n-1}_{t}} \}$ such that  $ x_{{n-1}_1},x_{{n-1}_2},\ldots,x_{{n-1}_p}$ are leaf vertices and $ x_{{n-1}_{p+1}},x_{{n-1}_{p+2}},\ldots,x_{{n-1}_{t}} $ are non-leaf vertices in $D$.  Here the $r$ vertices $x_{{n_1}},x_{{n_2}},\ldots,x_{{n_r}}$ are leaf vertices and the $t-p$ vertices $x_{{n-1}_{p+1}},x_{{n-1}_{p+2}},\ldots,x_{{n-1}_{t}}$ are of weight $ \geq 2$  in $D$ by the property $P.$\\
Let $\displaystyle{ J  \cap K = JL =    ( x_{n+k,1}\prod_{j=1}^{w_n}x_{nj}  )  ((x_{{n+1},1},x_{{n+2},1},\ldots,x_{{n+k-1},1},x_{{n-1},1},x_{{n_1},1},x_{{n_2},1},\ldots,x_{{n_r},1},} \\ \displaystyle{\prod_{j=1}^{w_{{n_{r+1}}}} x_{n_{r+1},j}   ,  \prod_{j=1}^{w_{{n_{r+2}}}} x_{n_{r+2},j} ,\ldots,\prod_{j=1}^{w_{{n_{s}}}} x_{n_{s},j}   ) + (I(D\setminus\{x_{n},x_{n-1}\})^{\mathcal{P}}).}$
Let $ L_1 =\displaystyle{ (\prod_{j=1}^{w_{{n_{r+1}}}} x_{n_{r+1},j} ,}\\\displaystyle{\prod_{j=1}^{w_{{n_{r+2}}}} x_{n_{r+2},j} ,\ldots,\prod_{j=1}^{w_{{n_{s}}}} x_{n_{s},j})  }$ and $ \displaystyle{L_2 =   (I(D\setminus\{x_{n},x_{n-1}\}))^{\mathcal{P}}}.$ Note that
    $ \reg(L) = \reg(L_1 + L_2 ) . $ By expressing $L_1$ as  $\displaystyle{ (x_{n_{r+1},1}\prod_{j=2}^{w_{{n_{r+1}}}} x_{n_{r+1},j} ,x_{n_{r+2},1}\prod_{j=2}^{w_{{n_{r+2}}}} x_{n_{r+2},j}\\ ,\ldots,x_{n_{s},1}\prod_{j=2}^{w_{{n_{s}}}} x_{n_{s},j})}$,   we can regard  $ L_1 + L_2  $ as the polarized    edge ideal  of  the  weighted oriented graph with $|E(D)| - (t+r+2)$ edges obtained from  $  D\setminus\{x_{n},x_{n-1}\} $
         by adding one leaf of weight $ w_{n_i} - 1 $  to each $x_{n_i}$   for $ i = r + 1,\ldots, s $. Observe that in this graph the $s-r$ vertices   $x_{n_{r+1}},\ldots,x_{n_s} ,$  the $t-p$ vertices $ x_{{n-1}_{p+1}}, \ldots,x_{{n-1}_{t}} $ become  source vertices and each of its component is with property $P.$ \\  
       So   we can apply Proposition \ref{atmost-one} to compute the  $\reg(L_1 + L_2  )  .  $\\ 
  Case-A: Assume $   N_D^- (x_{n-1}) \neq \emptyset $.\\ By the property $P$ of $D,$ $|N_{D}^- ( x_{n-1})| = 1  .$ Let $N_{D}^- ( x_{n-1}) =  \{x_{{n-2}}\} .$ 
  \\ 
  Case-I: Let  $ x_{n-2}\in  N_D^+ (x_n) $. Then by the property $P$,    $x_{n-2} \in  \{ x_{n_{r+1}},x_{n_{r+2}},\ldots,x_{n_s}      \} $ and $w_{n-1} \geq 2.$          
Thus by Lemma \ref{reg.3} and   Proposition \ref{atmost-one}, we have  
\begin{align*}
\reg(J\cap K)-1 &= \reg(J) + \reg(L) -1\\
\hspace*{2.43cm} &= \reg(J) +  \reg(L_1 + L_2  )-1 \\
\hspace*{2.43cm} &= (w_n + 1) + \displaystyle{\sum_{x \in V(D) \setminus V_1} }   w(x) + (1+w_{n_{r+1}}-1) + (1+w_{n_{r+2}}-1)\\
&\hspace*{0.4cm}+ \cdots + (1+w_{n_{s}}-1) +(t-p) - [|E(D)| - (t+r+2)] + 1 -1 \\
&= \displaystyle{ \sum_{x \in V(D) \setminus V_2}} w(x)   -|E(D)| + (t-p) + (t+r+2) + 1     
\end{align*}  
where $V_1 =  \{x_n,x_{n-1},x_{{n_1}},x_{{n_2}},\ldots,x_{{n_r}},x_{n_{r+1}},\ldots,x_{n_s},x_{{n-1}_1},\ldots,x_{{n-1}_p},x_{{n-1}_{p+1}}, \ldots,x_{{n-1}_{t}}\}$
\\ and $V_2 = V_1 \setminus \{ x_n,x_{n_{r+1}},\ldots,x_{n_s}  \}.$\\
Since $\displaystyle{ \sum_{x \in V_2 } } w(x)  = w_{n-1} + (w_{n_1} + w_{n_2}+ \cdots + w_{n_r} ) + (w_{{n-1}_1}+w_{{n-1}_2}+\cdots+w_{{n-1}_p}    ) +  (w_{{n-1}_{p+1}}+w_{{n-1}_{p+2}}+\cdots+w_{{n-1}_t}    )    \geq 2 + r + p + 2(t-p) = (t-p) + (t+r+2)    ,$ 
 $  \reg(J \cap   K) - 1 \leq   \reg(K).$  Thus by Lemma \ref{pol} and Corollary \ref{betti.2}, we have $$\reg(I(D^\prime_k))=\reg(I(D^\prime_k)^{\mathcal{P}}) = \max\{ \reg(J) , \reg(K) , \reg(J \cap K) - 1 \} = \reg(K) = \reg(I(D)).$$           
  Case-II: Let  $ x_{n-2} \notin N_D^+ (x_n) $. If   $ x_{n-2} $ is a leaf, then $x_{n-2}$ become a source vertex which implies   
  $w_{n-2} = 1$ and by the property $P$, $w_{n-1} \geq 1 .$  Then we follow the same  process  of Case-I and see that the value of $ \reg(J \cap K) $ remains same  as in Case-I  where only  $V_2$ is replaced by $V_2 \cup \{x_{n-2}\}$. If   $ x_{n-2} $ is not   a leaf,   
 $w_{n-2} \geq 1  $   and  by the property $P,$ $w_{n-1} \geq 2 .$ Again we follow the same  process  of Case-I and see that the value of $ \reg(J \cap K) $ remains same  as in Case-I  where $V_2$ also remains same.            
 Wheather $x_{n-2}$ is a leaf or non-leaf vertex,  $\displaystyle{ \sum_{x \in V_2 } } w(x)  \geq (t-p) + (t+r+2).$  Therefore by the similar arguement as in Case-I, $\reg(I(D^\prime_k))  =  \reg(I(D)).$    

 Case-B: Assume $   N_D^- (x_{n-1}) = \emptyset $.\\ By the property $P$ of $D,$ $x_{n-1}$ is a source vertex. This implies that $w_{n-1} = 1.$\\
Then by the similar arguement as in Case-I of Case-A, 
 \begin{align*}
 \reg(J\cap K)-1 &= \reg(J) + \reg(L) -1\\
 \hspace*{2.43cm} &= \reg(J) +  \reg(L_1 + L_2  )-1 \\
 \hspace*{2.43cm} &= (w_n + 1) + \displaystyle{\sum_{x \in V(D) \setminus V_1} }   w(x) + (1+w_{n_{r+1}}-1) + (1+w_{n_{r+2}}-1)\\
 &\hspace*{0.4cm}+ \cdots + (1+w_{n_{s}}-1) +(t-p) - [|E(D)| - (t+r+1)] + 1 -1 \\
 &= \displaystyle{ \sum_{x \in V(D) \setminus V_2}} w(x)   -|E(D)| + (t-p) + (t+r+1) + 1     
 \end{align*}  
 where $V_1 =  \{x_n,x_{n-1},x_{{n_1}},x_{{n_2}},\ldots,x_{{n_r}},x_{n_{r+1}},\ldots,x_{n_s},x_{{n-1}_1},\ldots,x_{{n-1}_p},x_{{n-1}_{p+1}}, \ldots,x_{{n-1}_{t}}\}$
 \\ and $V_2 = V_1 \setminus \{ x_n,x_{n_{r+1}},\ldots,x_{n_s}  \}.$\\
 Since $\displaystyle{ \sum_{x \in V_2 } } w(x)  = w_{n-1} + (w_{n_1} + w_{n_2}+ \cdots + w_{n_r} ) + (w_{{n-1}_1}+w_{{n-1}_2}+\cdots+w_{{n-1}_p}    ) +  (w_{{n-1}_{p+1}}+w_{{n-1}_{p+2}}+\cdots+w_{{n-1}_t}    )    \geq 1 + r + p + 2(t-p) = (t-p) + (t+r+1)    ,$ 
 $  \reg(J \cap   K) - 1 \leq   \reg(K).$  Thus  by the similar arguement as in Case-I of Case-A, $\reg(I(D^\prime_k))  =  \reg(I(D)).$

\end{proof}

\begin{proposition}\label{exteded-s.}
	Let $ D_1 , D_2 , \ldots, D_s $ for $s \geq 2$ are the  weighted   oriented graphs having
	 property $P$ with weight function $ w $  
	on vertex sets $ \{x_{1_1},\ldots,x_{{n_1}_1}\} ,$$ \{x_{1_2},\ldots,x_{{n_2}_2}\} , \ldots , \{x_{1_s},\\\ldots,x_{{n_s}_s}\} $  respectively. 
	 Let  $ D   $ be a weighted oriented graph obtained by adding $ k $ new oriented edges among $ D_1 , D_2 , \ldots, D_s $   where every edge is of the form $ (x_{a_i}
	, x_{b_j} ) $     for some $ x_{a_i} \in  V(D_i),$  $x_{b_j}  \in V(D_j) ,$  $i \neq j $ with  $ w_{a_i}, w_{b_j} \geq 2 $ such that  no vertex of  $ N^-_{D_j}(x_{b_j} ) $ is a leaf vertex in   $D_j$   and the set of new edges oriented towards $D_t$  go to a single vertex of $D_t$ for $t=1,\ldots,s. $ 
	Let $k_1,\ldots,k_s$ number of new edges are oriented towards      $D_1,\ldots,D_s$ where $k_1+\cdots+k_s=k$  and $D^\prime_{t}$ be the new weighted oriented graph  after addition of the  $k_t$ new oriented edges to $D_t$ which are oriented towards a single vertex of it for $t=1,\ldots,s. $              Then
	$$ \reg(R/I(D)) =\reg(R/I(D^\prime_{1})) +\cdots+ \reg(R/I(D^\prime_{s})).    $$              
\end{proposition}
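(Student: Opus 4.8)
The plan is to derive this proposition directly from Theorem \ref{exteded-s} and Theorem \ref{exteded-k}, so the real content is verifying that the hypotheses of each apply to the appropriate graph, rather than carrying out any new computation. The two theorems between them already handle, respectively, the splitting of the regularity across components when edges are added among them, and the invariance of the regularity when one component receives new incoming edges at a single vertex.

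First I would apply Theorem \ref{exteded-s} to $D$ itself. The hypotheses of the present proposition are a strict specialization of those of Theorem \ref{exteded-s}: every new edge $(x_{a_i}, x_{b_j})$ has $w_{a_i}, w_{b_j} \geq 2$ and no vertex of $N^-_{D_j}(x_{b_j})$ is a leaf of $D_j$, while the extra requirements ($i \neq j$, and all new edges directed into $D_t$ landing on a single vertex) only cut down the allowed configurations further. Hence Theorem \ref{exteded-s} applies and yields
$$\reg(R/I(D)) = \reg(R/I(D_1)) + \cdots + \reg(R/I(D_s)).$$

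Next I would treat each $D^\prime_t$ separately. By construction $D^\prime_t$ is obtained from $D_t$ by attaching the $k_t$ new incoming edges, all directed at one vertex, say $x_{b_t}$, with $w_{b_t} \geq 2$. Although the tails of these edges are vertices of other components in $D$, in $D^\prime_t$ they occur only as isolated sources; since the weight of a source never enters the edge ideal, each tail may be taken as a fresh vertex of weight $1$. Thus $D^\prime_t$ is precisely the graph produced by the construction of Theorem \ref{exteded-k} applied to $D_t$ at the vertex $x_{b_t}$ (which has property $P$ and weight at least $2$, as required). That theorem gives $\reg(I(D^\prime_t)) = \reg(I(D_t))$, and since $\reg(R/I) = \reg(I) - 1$ for a nonzero ideal, subtracting $1$ from both sides yields $\reg(R/I(D^\prime_t)) = \reg(R/I(D_t))$ for every $t$.

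Combining the two displays gives $\reg(R/I(D)) = \sum_{t=1}^s \reg(R/I(D_t)) = \sum_{t=1}^s \reg(R/I(D^\prime_t))$, which is the assertion. The only step that genuinely requires care, and the nearest thing to an obstacle, is the bookkeeping in passing from $D$ to each $D^\prime_t$: one must confirm that the tails of the cross-component edges really behave as brand-new source vertices in $D^\prime_t$, so that the single-target construction of Theorem \ref{exteded-k} is matched verbatim and no spurious interaction among components is introduced. Once that identification is settled, the conclusion is immediate.
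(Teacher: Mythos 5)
Your proposal is correct and follows exactly the same route as the paper's own proof: apply Theorem \ref{exteded-s} to split $\reg(R/I(D))$ over the components $D_t$, then apply Theorem \ref{exteded-k} to identify $\reg(R/I(D_t))$ with $\reg(R/I(D^\prime_t))$. Your additional bookkeeping (checking the proposition's hypotheses specialize those of Theorem \ref{exteded-s}, and that the tails of cross-component edges become weight-one sources in each $D^\prime_t$, matching the new-vertex construction of Theorem \ref{exteded-k}) is a more careful verification of steps the paper takes for granted.
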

\begin{proof}
By Theorem \ref{exteded-s}, we have $\reg(R/I(D)) =\reg(R/I(D_1)) +\cdots+ \reg(R/I(D_s))$ and by Theorem \ref{exteded-k}, $\reg(R/I(D_t)) =\reg(R/I(D^\prime_{t}))  $ for   $t=1,\ldots,s. $ Hence   $ \reg(R/I(D)) =\reg(R/I(D^\prime_{1})) +\cdots+ \reg(R/I(D^\prime_{s})).   $  
\end{proof}
The above proposition partially answer  the following question asked by H.T. H{\`a} in \cite{htha}.
\begin{question}\cite[Problem 6.8]{htha}
 Let $ \mathcal{H}, \mathcal{H}_1,\ldots,\mathcal{H}_s $ be simple hypergraphs over the same vertex set $ X $ and
assume that $ \displaystyle{ \mathscr{E}(\mathcal{H}) = \bigcup_{i=1}^{s}
	\mathscr{E}(\mathcal{H}_i). }$ Find combinatorial conditions for the following equality
to hold:
$$ \reg(S/I(\mathcal{H})) = \sum_{i=1}^{s}\reg(S/I(\mathcal{H}_i)).$$

\end{question}
\textbf{Observation:}
Let $ D, D^\prime_{1}, D^\prime_{2}, \ldots, D^\prime_{s} $ and $R$  are same as defined in Proposition \ref{exteded-s.}. Let $X =  \{x_{1_11},\ldots,x_{1_1w_{1_1}},\ldots,x_{{n_1}_11},\ldots,x_{{n_1}_1w_{{n_1}_1}},  \ldots ,x_{1_s1},\ldots,x_{1_sw_{1_s}},\ldots,x_{{n_s}_s1},\ldots,x_{{n_s}_sw_{{n_s}_s}}\}$ and $S = R^{\mathcal{P}}.$ If we assume that $ \mathcal{H}, \mathcal{H}_1,\ldots,\mathcal{H}_s $ be the simple hypergraphs  over  $X$ such that  $I(D)^\mathcal{P},I(D^\prime_{1})^\mathcal{P},\ldots,I(D^\prime_{s})^\mathcal{P}$  are the square-free monomial  edge ideals   $ I(\mathcal{H}), I(\mathcal{H}_1),  \ldots,\\I(\mathcal{H}_s) $ respectively then   $ \displaystyle{ \mathscr{E}(\mathcal{H}) = \bigcup_{i=1}^{s}
	\mathscr{E}(\mathcal{H}_i) } $ and  by Proposition \ref{exteded-s.} and Lemma \ref{pol},\\  $\displaystyle{ \reg(S/I(\mathcal{H})) = \sum_{i=1}^{s}\reg(S/I(\mathcal{H}_i))}.$

\section{Regularity in weighted Oriented Paths and Cycles}
 In this section for a weighted
 oriented path or cycle, we relate the regularity of its edge ideal to the regularity
 of edge ideal of its underlying graph when vertices of  $ V^+ $ are sinks. Firstly, we compute the regularity of edge ideals of weighted oriented paths when  vertices of $ V^+ $ are sinks. \\
We divide the set $ T $ of all weighted oriented paths when
  vertices of $ V^+ $ are sinks into two sets:\\
 $ T_1 $: Set of all weighted oriented paths where the two end vertices are in $ V^+ $ and the distance between any two consecutive  vertices of $ V^+ $ is 3.\\
 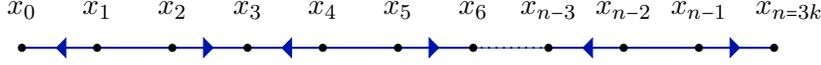
\begin{figure}[!ht]\label{fig.4}
 	\begin{tikzpicture}
 	\begin{scope}[ thick, every node/.style={sloped,allow upside down}] 
 	\definecolor{ultramarine}{rgb}{0.07, 0.04, 0.56} 
 	\definecolor{zaffre}{rgb}{0.0, 0.08, 0.66}   
 	\draw[fill, zaffre] (1,0) --node {\midarrow}(0,0);
 	\draw[fill, zaffre][-,thick] (1,0) --(2,0);
 	\draw[fill, zaffre] (2,0) --node {\midarrow}(3,0);
 	\draw[fill, zaffre] (4,0) --node {\midarrow}(3,0);
 	\draw[fill, zaffre][-,thick] (4,0) --(5,0);
 	\draw[fill, zaffre] (5,0) --node {\midarrow}(6,0);
 	\draw[fill, zaffre][dotted,thick] (6,0) --(7,0);
 	\draw [fill, zaffre](8,0) --node {\midarrow}(7,0);
 	\draw[fill, zaffre][-,thick] (8,0) --(9,0);
 	\draw[fill, zaffre] (9,0) --node {\midarrow}(10,0);
 	\draw [fill] [fill] (0,0) circle [radius=0.04];
 	\draw[fill] [fill] (1,0) circle [radius=0.04];
 	\draw[fill] [fill] (2,0) circle [radius=0.04];
 	\draw[fill] [fill] (3,0) circle [radius=0.04];
 	\draw[fill] [fill] (4,0) circle [radius=0.04];
 	\draw[fill] [fill] (5,0) circle [radius=0.04];
 	\draw[fill] [fill] (6,0) circle [radius=0.04];
 	\draw[fill] [fill] (7,0) circle [radius=0.04];
 	\draw[fill] [fill] (8,0) circle [radius=0.04];
 	\draw[fill] [fill] (9,0) circle [radius=0.04];
 	\draw[fill] [fill] (10,0) circle [radius=0.04];
 	\node at (0,0.5) {$x_0$};
 	\node at (1,0.5) {$x_1$};
 	\node at (2,0.5) {$x_2$};
 	\node at (3,0.5) {$x_{3}$};
 	\node at (4,0.5) {$x_4$};
 	\node at (5,0.5) {$x_5$};
 	\node at (6,0.5) {$x_6$};
 	\node at (7,0.5) {$x_{n-3}$};
 	\node at (8,0.5) {$x_{n-2}$};
 	\node at (9,0.5) {$x_{n-1}$};
 	\node at (10.2,0.5) {$x_{n=3k}$};   
 	\end{scope}
 	\end{tikzpicture}
 	\caption{A weighted oriented path in $ T_1 $.}
 \end{figure}\\
 Note that the length of any weighted oriented path in $T_1$ is a multiple of $3.$ (See Figure \ref{fig.4}.)\\
$ T_2 $: Set of remaining weighted oriented paths when the vertices of $ V^+ $ are sinks, i.e.,  \hspace*{0.8cm}$   T_2 = T \setminus T_1. $ 
%and which are not in $ T_1 .$
\begin{remark} \label{T} 
	Let $D$   be a weighted oriented path of length $n$ for $ n \geq 4 $ in $ T_2 $  with underlying graph $ G= P_n=x_0 x_1 \ldots x_n .$ Let $ D_1 = D\setminus \{x_n\} $, $ D_2 = D\setminus \{ x_{n-2},x_{n-1},x_n \} ,D^\prime_1 = D\setminus \{x_0\} $ and $ D^\prime_2 = D\setminus \{ x_{0},x_{1},x_2 \}.$\\
  Case-I: Assume $ n\equiv 1~(\mbox{mod}~3).$ Here $n\geq 4$ and $n=3k+1$ for some $k\in \mathbb{N}.$ Then the length of $D_2$ or $D^\prime_2$ is $3k-2$ which is not a multiple of $3.$ Thus both $D_2$ and $D^\prime_2$ are in $T_2.$ If  $D_1$ is in $T_1,$ then $x_{n-1}  \in V^+$ which implies  $x_{n}  \notin V^+.$ So  one end vertex of $D^\prime_1,$ i.e., $x_{n}  \notin V^+.$  Hence $D^\prime_1$  is in $T_2.$\\
	Case-II: Assume $ n\equiv 2~(\mbox{mod}~3).$ Here $n \geq 5$ and $n=3k+2$ for some $k\in \mathbb{N}.$ Then the length of $D_1$ and $D_2$ are $3k+1$ and $3k-1$ respectively. Note that none of them is  a multiple of $3.$ Hence both $D_1$ and $D_2$ are in $T_2.$\\
	Case-III: Assume $ n\equiv 0~(\mbox{mod}~3).$ Here $n\geq 6$ and $n=3k$ for some $k\in \mathbb{N}.$  Then the length of $D_1$ or $D^\prime_1$ is $3k-1$ which is not a multiple of $3.$ Thus both $D_1$ and $D^\prime_1$ are in $T_2.$ If  $D_2$ is in $T_1,$ then $x_{n-3}  \in V^+.$ Since $D$ is in $T_2,$  $x_{n}  \notin V^+$ which implies one end vertex of $D^\prime_2,$ i.e., $x_{n}  \notin V^+.$  Hence $D^\prime_2$  is in $T_2.$\\ 
	Observe that  if $D$ $\in  $ $T_2$ then $D_1,$  $D_2$ $\in  $ $T_2$ or $D^\prime_1,$  $D^\prime_2$ $\in  $ $T_2$. Thus without loss of generality we can rename the vertices and always assume that $D_1$ and $D_2$ are in $T_2.$            
\end{remark}

\begin{theorem} \label{ClassII}
 Let $ D $ be a weighted oriented path of length $ n $ in $ T_2 $  with underlying graph $ G= P_n=x_0 x_1 \ldots x_n .$ Then $\reg(I(D)) = \reg(I(G)) + \displaystyle{ \sum_{x_i \in V^+}}(w_i - 1) $ where $ w_i=w(x_i)  $ for $x_i \in V^+.$ 
\end{theorem}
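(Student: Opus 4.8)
The plan is to argue by induction on the length $n$, working with the polarized ideal $I(D)^{\mathcal P}$ and a Betti splitting at the last (leaf) vertex, and to feed the result into the underlying‑graph recursion $\reg(I(P_n))=\reg(I(P_{n-3}))+1$ from Corollary \ref{path}(b). The base cases are the short paths $n\le 3$, handled directly: for a single edge into a sink the ideal is principal of degree $w_n+1$, and one checks the formula against $\reg(I(P_n))$ for $n=1,2,3$ (note that a single edge can never put both endpoints in $V^+$, so these paths lie in $T_2$). For the inductive step I would first invoke Remark \ref{T} to relabel the vertices so that both $D_1=D\setminus\{x_n\}$ and $D_2=D\setminus\{x_{n-2},x_{n-1},x_n\}$ lie in $T_2$. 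Since every vertex of $V^+$ is a sink, deleting the last three vertices only removes $\{x_{n-2},x_{n-1},x_n\}\cap V^+$ from $V^+$ and does not change the weight of $x_{n-3}$, so applying the induction hypothesis to $D_2$ (whose underlying graph is $P_{n-3}$) reduces the theorem to the single identity
$$\reg(I(D))=\reg(I(D_2))+1+\!\!\sum_{\substack{i\in\{n-2,n-1,n\}\\ x_i\in V^+}}\!\!(w_i-1).$$

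To establish this identity I would polarize and write $I(D)^{\mathcal P}=J+K$, where $J$ is the principal ideal generated by the unique generator involving the leaf variable $x_{n,1}$ and $K=I(D_1)^{\mathcal P}$ collects the remaining generators. Since $J$ has a linear resolution, Theorem \ref{betti.1} shows this is a Betti splitting, so Corollary \ref{betti.2} gives $\reg(I(D))=\max\{\reg(J),\reg(K),\reg(J\cap K)-1\}$. Here $\reg(J)$ is just the degree of that generator (equal to $w_n+1$ when the last edge points into $x_n$), while $\reg(K)=\reg(I(D_1))$ by Lemma \ref{pol}. The heart of the matter is $J\cap K$: writing $J\cap K=J\cdot(K:x_{n-1,1})$ and observing that the variables $x_{n,\bullet}$ and $x_{n-1,1}$ appearing in $J$ do not occur in $(K:x_{n-1,1})$ (because $x_{n-1}$ is a leaf of $D_1$, so only one generator of $K$ is divisible by $x_{n-1,1}$), Lemma \ref{reg.3} yields $\reg(J\cap K)=\reg(J)+\reg(K:x_{n-1,1})$.

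It remains to compute $\reg(K:x_{n-1,1})$. Up to the isolated extra variables produced by polarization, this colon ideal is the edge ideal of $D\setminus\{x_{n-1},x_n\}$ with one leaf of weight $w_{n-1}-1$ grafted onto $x_{n-2}$ — exactly the kind of modified graph encountered in the proof of Theorem \ref{exteded-k} — whose components have property $P$, so its regularity is computed by Proposition \ref{atmost-one} (or, where convenient, by the induction hypothesis applied to $D_2$). Assembling the three quantities $\reg(J)=w_n+1$, $\reg(K)=\reg(I(D_1))$ and $\reg(J\cap K)-1=w_n+\reg(K:x_{n-1,1})$, I would then show that the maximum in Corollary \ref{betti.2} always realizes the right‑hand side of the displayed identity: the extra $+1$ appears precisely when deleting $\{x_{n-2},x_{n-1},x_n\}$ completes a new length‑three block, mirroring the jump in Corollary \ref{path}(b), and the sum $\sum(w_i-1)$ is accumulated from the polarized degrees of the sinks among $x_{n-2},x_{n-1},x_n$.

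The main obstacle is this final bookkeeping. The orientation of the edges incident to $x_{n-1}$ and $x_{n-2}$, whether each of $x_{n-2},x_{n-1},x_n$ lies in $V^+$, and the residue of $n$ modulo $3$ all affect which of the three terms attains the maximum, so the argument splits into several cases of the same flavour as Cases A, B, I, II in the proof of Theorem \ref{exteded-k}. The role of the hypothesis $D\in T_2$ is exactly to exclude the configuration that defines $T_1$ — both endpoints in $V^+$ with consecutive special vertices at distance $3$ — in which the colon computation would force $\reg(J\cap K)-1$ to overtake $\reg(K)$ by the wrong amount; maintaining $D_1,D_2\in T_2$ throughout, as guaranteed by Remark \ref{T}, is precisely what allows the induction to close.
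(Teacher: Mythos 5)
You follow essentially the same route as the paper: induction together with Remark \ref{T}, a Betti splitting $I(D)=J+K$ where $J$ is generated by the monomial of the last edge and $K=I(D_1)$, a factorization of $J\cap K$ as $J$ times a colon ideal, Lemma \ref{reg.3}, and a case analysis according to which of $x_{n-2},x_{n-1},x_n$ lie in $V^+$. (Polarizing first is harmless but unnecessary: Theorem \ref{betti.1} and Corollary \ref{betti.2} are applied in the paper directly to the non-squarefree ideal $I(D)$.) However, two steps at the heart of your computation fail as stated. First, the identity $J\cap K=J\cdot(K:x_{n-1,1})$, and the disjointness of supports you use to invoke Lemma \ref{reg.3}, break down precisely when $x_{n-1}\in V^+$ (the paper's Case III). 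Since vertices of $V^+$ are sinks, in that case the last edge is $(x_n,x_{n-1})$ and the generator of $J$ polarizes to $x_{n,1}\prod_{j=1}^{w_{n-1}}x_{n-1,j}$, so the variables $x_{n-1,j}$ with $j\geq 2$ occur both in $J$ and in the generator of $K$ coming from the edge $\{x_{n-2},x_{n-1}\}$; one must take the colon by the whole monomial, not by $x_{n-1,1}$ alone (for principal $J=(m)$ one has $J\cap K=m(K:m)$, and here $(K:m)\neq(K:x_{n-1,1})$). Once this is corrected, in every case the colon ideal is $(I(D_2),x_{n-2})$ or $(I(D_2),x_{n-2}^{w_{n-2}})$ (up to polarization); your picture of ``a leaf of weight $w_{n-1}-1$ grafted onto $x_{n-2}$'' is an artifact of the wrong colon and never occurs in $T_2$, because $w_{n-1}\geq 2$ would force the edge $\{x_{n-1},x_n\}$ to point into $x_{n-1}$.

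Second, and more seriously, your primary recipe for the regularity of that colon ideal --- that its components have property $P$ so Proposition \ref{atmost-one} applies --- is false. $D_2$ is just another path in $T_2$, and such paths generally violate property $P$: whenever $D_2$ contains a weight-one non-leaf vertex whose unique in-edge comes from a non-leaf (for instance a naturally oriented path of length at least $3$ with all weights equal to $1$), property $P$ fails, and the formula of Proposition \ref{atmost-one} is genuinely wrong there: for the naturally oriented $P_6$ with trivial weights it predicts $\reg(R/I)=7-6=1$, whereas $\reg(R/I(P_6))=2$ by Corollary \ref{path}. So this route produces incorrect values. The only workable route --- the one the paper takes, and which you mention only parenthetically --- is to apply the induction hypothesis to $D_2\in T_2$ (this membership is exactly what Remark \ref{T} guarantees) and then use Corollary \ref{path} to replace $\reg(I(G_2))$ by $\reg(I(G))-1$. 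With these two repairs your outline becomes the paper's proof; as written, both the identification of $J\cap K$ and the evaluation of its regularity contain genuine gaps.
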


\begin{proof}
	Here $ V(D)=\{x_0,x_1,\ldots,x_n\} $. 
	 We use the method of induction on the number of edges of $D$  and prove this theorem in different cases depending upon the position of the sink vertices.\\
	Base Case: $ |E(D)| \leq  3.$\\ Assume that $ |E(D)| =  3$ and $ V(D) = \{x_0,x_1,x_2,x_3\}.$
	 If $x_0,x_1 \notin V^+ $ and $x_3 \in V^+,$ then
	$I(D)=(x_0x_1,x_1x_2,x_2x_3^{w_3}).$
	 Let $ J=(x_2x_3^{w_3}) $ and $ K=(x_0x_1,x_1x_2) .$ Since $ J $ has linear resolution, $ I(D)=J+K $ is a Betti splitting. Here $ \reg(J)=w_3 + 1 $ and $ \reg(K)= 2. $ Note that $J\cap K= JL  $ where $ L = (x_1) .$ This implies that $ \reg(J\cap K) = w_3 + 2  .$ Thus by Corollary \ref{betti.2}, we have $\reg(I(D)) = \max\{ \reg(J),\reg(K), \reg(J\cap K) - 1\}$  $= w_3+ 1$  $= \reg(I(G)) +  w_3 - 1.$ Similarly depending upon the position of sink vertices using the Betti splitting technique for any weighted oriented path $D$ in $T_2$ with $ |E(D)| \leq 3 ,$ we can show that  $\reg(I(D)) =  \reg(I(G)) + \displaystyle{ \sum_{x_i \in V^+}}(w_i - 1).$\\    
	Now we consider $D$  to be a weighted oriented path of length $ n \geq 4 $ and $ V(D) = \{x_0,\ldots,x_n\}.$ Let $ D_1 = D\setminus \{x_n\} $, $ D_2 = D\setminus \{ x_{n-2},x_{n-1},x_n \} ,$ $ G_1 = G\setminus \{x_n\} $ and $ G_2 = G\setminus \{ x_{n-2},x_{n-1},x_n \} ,$ i.e., $ G_1 $ and $ G_2 $ are the corresponding underlying graphs of $D_1 $ and  $ D_2 $ respectively. Without loss of generality by Remark \ref{T}, we can fix $x_n$ in one end of $D$ such that $ D_1 $ and $ D_2 $ are in $ T_2 $.\\ 
	Case-I: Assume that $ x_{n-2} \notin V^+  $  and  $ x_{n} \in V^+  $. 
	Let $ J = (x_{n-1}x_n^{w_n}) $ and $ K=I(D_1). $ As $ J $ has linear resolution, $ I(D) = J + K $ is a Betti splitting and $ \reg(J)= w_n + 1 $. Since $ D_1 $ is the weighted oriented path of length $ n-1 $ in $ T_2 $, by induction hypothesis we get $ \reg(K)=\reg(I(D_1))= \reg(I(G_1)) + \displaystyle{ \sum_{x_i \in V^+ \setminus \{x_n\} }}(w_i - 1)  .$ Note that $ J \cap K= JL $ where $ L= (I(D_2),x_{n-2}) .$ Since $ D_2 $ is the weighted oriented path of length $ n-3 $ in $ T_2 $, by induction hypothesis we have $\reg(L)=\reg(I(D_2))= \reg(I(G_2)) + \displaystyle{ \sum_{x_i \in V^+ \setminus \{x_n\} }}(w_i - 1)   .$  By Corollary \ref{path}, $ \reg(I(G))= \reg(I(G_2)) + 1 .$ Thus by Lemma \ref{reg.3}, we have   
	 \begin{align*}
	 \reg(J\cap K) &= \reg(J) + \reg(L)\\
	 &=\reg(J) + \reg(I(D_2))\\&=(w_n + 1) + \reg(I(G_2)) + \displaystyle{ \sum_{x_i \in V^+ \setminus \{x_n\} }}(w_i - 1)\\&= \reg(I(G_2)) + 1 + \displaystyle{ \sum_{x_i \in V^+ \setminus \{x_n\} }}(w_i - 1) + w_n\\&= \reg(I(G)) + \displaystyle{ \sum_{x_i \in V^+ \setminus \{x_n\} }}(w_i - 1) + w_n
	 \end{align*} 
By Lemma \ref{H<G}, $\reg(I(G_1)) \leq \reg(I(G))	.$  Thus by Corollary \ref{betti.2}, we get    
	 \begin{align*}
         \reg(I(D)) &= \max\{ \reg(J),~\reg(K), ~\reg(J\cap K) - 1\}\\&= \max\{ w_n + 1,~\reg(I(G_1)) + \displaystyle{ \sum_{x_i \in V^+ \setminus \{x_n\} }}(w_i - 1) ,~ \reg(I(G)) + \displaystyle{ \sum_{x_i \in V^+}}(w_i - 1)  \} \\&=\reg(I(G)) + \displaystyle{ \sum_{x_i \in V^+}}(w_i - 1). 
	 \end{align*}
Case-II: Assume that  $ x_{n-2}, x_{n} \in V^+  $.\\ 
Let $ J = (x_{n-1}x_n^{w_n}) $ and $ K=I(D_1). $ Since $ J $ has linear resolution, $ I(D) = J + K $ is a Betti splitting. Here $ \reg(J)= w_n + 1 $ and by the same arguement as in Case-I, 
$\reg(K)= \reg(I(G_1)) + \displaystyle{ \sum_{x_i \in V^+ \setminus \{x_n\}  }}(w_i - 1)  $ and $\reg(J \cap K ) =  \reg(I(G)) + \displaystyle{ \sum_{x_i \in V^+ \setminus \{x_n\} }}(w_i - 1) + w_n.$ By the same arguement as in Case-I and Corollary \ref{betti.2}, $\reg(I(D))= \reg(I(G)) + \displaystyle{ \sum_{x_i \in V^+  }}(w_i - 1)   .  $ \\ 
Case-III: Assume that  $ x_{n-1} \in V^+  $.\\ 
Let $ J = (x_{n}x_{n-1}^{w_{n-1}}) $ and $ K=I(D_1). $ Since $ J $ has linear resolution, $ I(D) = J + K $ is a Betti splitting. Here $ \reg(J)= w_{n-1} + 1  $ and 
by the same arguement as in Case-I, 
$\reg(K)= \reg(I(G_1)) + \displaystyle{ \sum_{x_i \in V^+   }}(w_i - 1)  $ and $\reg(J \cap K ) =  \reg(I(G)) + \displaystyle{ \sum_{x_i \in V^+ \setminus \{x_{n-1}\} }}(w_i - 1) + w_{n-1}.$ By the same arguement as in Case-I and Corollary \ref{betti.2}, $\reg(I(D))= \reg(I(G)) + \displaystyle{ \sum_{x_i \in V^+  }}(w_i - 1)   .  $\\
Case-IV: Assume that $ x_{n-2} \in V^+ , x_{n-1} \notin V^+  $  and  $ x_{n} \notin V^+  $.\\ 
Let $ J = (x_{n-1}x_n) $ and $ K=I(D_1)  .$  Since $ J $ has linear resolution, $ I(D) = J + K $ is a Betti splitting. Here $ \reg(J)= 2 $   and by the same arguement as in Case-I, 
$\reg(K)= \reg(I(G_1)) + \displaystyle{ \sum_{x_i \in V^+   }}(w_i - 1)  $ and $\reg(J \cap K ) =  \reg(I(G)) + \displaystyle{ \sum_{x_i \in V^+ \setminus \{x_{n-2}\} }}(w_i - 1) + w_{n-2}.$ By the same arguement as in Case-I and Corollary \ref{betti.2}, $\reg(I(D))= \reg(I(G)) + \displaystyle{ \sum_{x_i \in V^+  }}(w_i - 1)   .  $\\
Case-V: Assume that $ x_{n-2},x_{n-1},x_{n} \notin V^+  .$ \\ 
Let $ J = (x_{n-1}x_n) $ and $ K=I(D_1). $ Since $ J $ has linear resolution, $ I(D) = J + K $ is a Betti splitting. Here $ \reg(J)= 2$ and  by the same arguement as in Case-I, 
$\reg(K)= \reg(I(G_1)) + \displaystyle{ \sum_{x_i \in V^+   }}(w_i - 1)  $ and $\reg(J \cap K ) =  \reg(I(G)) + \displaystyle{ \sum_{x_i \in V^+ }}(w_i - 1) + 1 .$ By the same arguement as in Case-I and Corollary \ref{betti.2}, $\reg(I(D))= \reg(I(G)) + \displaystyle{ \sum_{x_i \in V^+  }}(w_i - 1)   .  $\\
 Hence for any  weighted oriented path $D$ of length $ n $ in $ T_2 $,  $$\reg(I(D)) = \reg(I(G)) + \displaystyle{ \sum_{x_i \in V^+}}(w_i - 1)  ~\mbox{where}~  w_i=w(x_i)   ~\mbox{for}~ x_i \in V^+.$$ 
	 
\end{proof}
 
 \begin{theorem} \label{ClassI}
 	Let $ D $ be a weighted oriented path of length $ n $ in $ T_1 $ with  underlying graph $ G= P_n=x_0 x_1 \cdots x_n .$ Then $\reg(I(D)) = \reg(I(G)) + \displaystyle{ \sum_{x_i \in V^+\setminus\{x_j\}}}(w_i - 1) $   where $ w_i=w(x_i)  $ for $x_i \in V^+$ and $ x_j $ is one of  the vertices of $ V^+ $ with minimum weight. 
 \end{theorem}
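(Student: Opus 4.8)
The plan is to induct on $k$, where $n=3k$ is the length of $D$ (recall that every path in $T_1$ has length a multiple of $3$), peeling off one endpoint so as to reduce to a shorter path in $T_1$ together with a path in $T_2$ that is already handled by Theorem \ref{ClassII}. For the base case $k=1$ we have $D=x_0x_1x_2x_3$ with $x_0,x_3\in V^+$ sinks, so $I(D)=(x_1x_0^{w_0},\,x_1x_2,\,x_2x_3^{w_3})$. Taking $J=(x_1x_0^{w_0})$ and $K=(x_1x_2,\,x_2x_3^{w_3})$ gives a Betti splitting by Theorem \ref{betti.1}, with $\reg(J)=w_0+1$, $\reg(K)=w_3+1$, and $J\cap K=(x_0^{w_0}x_1x_2)$ principal so that $\reg(J\cap K)-1=w_0+1$. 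Corollary \ref{betti.2} then gives $\reg(I(D))=\max(w_0,w_3)+1=\reg(I(P_3))+(\max(w_0,w_3)-1)$, which is the claim since $\reg(I(P_3))=2$ and $\max(w_0,w_3)$ is the weight of the non-minimal endpoint.

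For the inductive step, since $D\in T_1$ the endpoint $x_n$ lies in $V^+$ and is a sink, so $x_{n-1}x_n^{w_n}$ is the unique generator of $I(D)$ divisible by $x_n$. I would set $J=(x_{n-1}x_n^{w_n})$ and $K=I(D_1)$ with $D_1=D\setminus\{x_n\}$; this is a Betti splitting by Theorem \ref{betti.1}, and $\reg(J)=w_n+1$. The endpoint $x_{n-1}$ of $D_1$ satisfies $x_{n-1}\notin V^+$, so $D_1\in T_2$ and Theorem \ref{ClassII} computes $\reg(K)=\reg(I(P_{n-1}))+\sum_{x_i\in V^+\setminus\{x_n\}}(w_i-1)$.

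The key structural step is to identify $J\cap K$. Computing least common multiples of $x_{n-1}x_n^{w_n}$ with the generators of $I(D_1)$, and noting that $x_{n-1}$ meets only the generator $x_{n-2}x_{n-1}$, shows $J\cap K=x_{n-1}x_n^{w_n}\cdot L$ with $L=(x_{n-2})+I(D_2)$, where $D_2=D\setminus\{x_{n-2},x_{n-1},x_n\}$ is again a path in $T_1$, now of length $3(k-1)$. Since $x_{n-2}\notin\Supp I(D_2)$ and $\{x_{n-1},x_n\}\cap\Supp L=\emptyset$, Lemma \ref{reg.2} gives $\reg(L)=\reg(I(D_2))$ and Lemma \ref{reg.3} gives $\reg(J\cap K)=(w_n+1)+\reg(I(D_2))$; the induction hypothesis evaluates $\reg(I(D_2))$.

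Finally I would feed these three regularities into Corollary \ref{betti.2}. Using $\reg(I(P_{3k}))=\reg(I(P_{3k-1}))=k+1$ and $\reg(I(P_{3k-3}))=k$ from Corollary \ref{path}, each of $\reg(J)$, $\reg(K)$, and $\reg(J\cap K)-1$ can be compared against the target value $k+\bigl(\sum_{x_i\in V^+}(w_i-1)\bigr)+2-w_j$. The main obstacle, and the only delicate point, is that the induction hypothesis for $D_2$ discards a minimum-weight vertex of $V^+(D_2)=V^+(D)\setminus\{x_n\}$, which need not be a global minimum of $D$. I would resolve this by a case split: when the global minimum weight is attained at some vertex other than $x_n$, the term $\reg(J\cap K)-1$ is dominant and already equals the target, whereas when $x_n$ is the strict minimum, $\reg(K)$ is dominant and equals the target. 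In both cases the remaining inequalities, in particular $\reg(J)=w_n+1\le$ target, follow from $w_i\ge 2$ on $V^+$ together with $k\ge 1$ by elementary estimates, giving $\reg(I(D))=\reg(I(G))+\sum_{x_i\in V^+\setminus\{x_j\}}(w_i-1)$.
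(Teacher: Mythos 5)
Your proposal is correct and follows essentially the same route as the paper: induction on $k$ with the Betti splitting $J=(x_{n-1}x_n^{w_n})$, $K=I(D_1)$, the identification $J\cap K = x_{n-1}x_n^{w_n}\bigl((x_{n-2})+I(D_2)\bigr)$ with $D_1\in T_2$ handled by Theorem \ref{ClassII} and $D_2\in T_1$ by the induction hypothesis, and the final comparison via Corollary \ref{betti.2} and Corollary \ref{path}. The only cosmetic differences are that your base case peels off the opposite endpoint and that your concluding case split (global minimum at $x_n$ versus elsewhere) spells out explicitly what the paper compresses into the statement $x_j=\min\{x_n,x_m\}$.
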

  \begin{proof}
  Here $ V(D)=\{x_0,x_1,\ldots,x_n\} $. 	
   By the definition of $ T_1 ,$  $ G=P_n=P_{3k} $	for some $ k\in \mathbb{N} .$ We use the method of induction on $ k. $ 
  	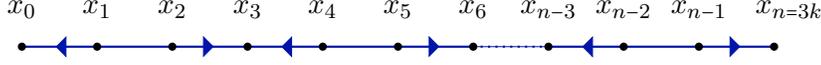
\begin{figure}[!ht]\label{fig.5}
  	\begin{tikzpicture}
  	\begin{scope}[ thick, every node/.style={sloped,allow upside down}] 
  	\definecolor{ultramarine}{rgb}{0.07, 0.04, 0.56} 
  		\definecolor{zaffre}{rgb}{0.0, 0.08, 0.66}   
  	\draw[fill, zaffre] (1,0) --node {\midarrow}(0,0);
  	\draw[fill, zaffre][-,thick] (1,0) --(2,0);
  	\draw[fill, zaffre] (2,0) --node {\midarrow}(3,0);
  	\draw[fill, zaffre] (4,0) --node {\midarrow}(3,0);
  	\draw[fill, zaffre][-,thick] (4,0) --(5,0);
  	\draw[fill, zaffre] (5,0) --node {\midarrow}(6,0);
  	\draw[fill, zaffre][dotted,thick] (6,0) --(7,0);
  	\draw [fill, zaffre](8,0) --node {\midarrow}(7,0);
  	\draw[fill, zaffre][-,thick] (8,0) --(9,0);
  	\draw[fill, zaffre] (9,0) --node {\midarrow}(10,0);
  	\draw [fill] [fill] (0,0) circle [radius=0.04];
  	\draw[fill] [fill] (1,0) circle [radius=0.04];
  	\draw[fill] [fill] (2,0) circle [radius=0.04];
  	\draw[fill] [fill] (3,0) circle [radius=0.04];
  	\draw[fill] [fill] (4,0) circle [radius=0.04];
  	\draw[fill] [fill] (5,0) circle [radius=0.04];
  	\draw[fill] [fill] (6,0) circle [radius=0.04];
  	\draw[fill] [fill] (7,0) circle [radius=0.04];
       \draw[fill] [fill] (8,0) circle [radius=0.04];
  	\draw[fill] [fill] (9,0) circle [radius=0.04];
  	\draw[fill] [fill] (10,0) circle [radius=0.04];
  	\node at (0,0.5) {$x_0$};
  	\node at (1,0.5) {$x_1$};
  	\node at (2,0.5) {$x_2$};
  	\node at (3,0.5) {$x_{3}$};
  	\node at (4,0.5) {$x_4$};
  	\node at (5,0.5) {$x_5$};
  	\node at (6,0.5) {$x_6$};
  	\node at (7,0.5) {$x_{n-3}$};
  	\node at (8,0.5) {$x_{n-2}$};
  	\node at (9,0.5) {$x_{n-1}$};
  	\node at (10.2,0.5) {$x_{n=3k}$};   
  	\end{scope}
  	\end{tikzpicture}
  	\caption{A weighted oriented path in $ T_1 $.}
  \end{figure}\\
  Base Case: If $ k=1 ,$ then $ I(D)=(x_0^{w_0}x_1,x_1x_2,x_2x_3^{w_3}).$ Let $ J=(x_2x_3^{w_3}) $ and $ K=(x_0^{w_0}x_1,x_1x_2) .$ Since $ J $ has linear resolution, $ I(D)=J+K $ is a Betti splitting. Here $ \reg(J)=w_3 + 1 $ and $ \reg(K)=w_0 + 1. $ Note that $J\cap K= JL  $ where $ L = (x_1) .$ This implies that $ \reg(J\cap K) = w_3 + 2  .$ Thus by Corollary \ref{betti.2}, we have $\reg(I(D)) = \max\{ \reg(J),\reg(K),\\ \reg(J\cap K) - 1\}$  $= \max\{ w_0 + 1,w_3+ 1\}$  $= 2 +  \max\{ w_0-1 ,w_3 - 1\}$  $= \reg(I(G)) +  \max\{ w_0-1 ,w_3 - 1\}.$ \\
  Now we consider the case $ n=3k $ for some $ k > 1.  $  Let $ D_1 = D\setminus \{x_n\} $, $ D_2 = D\setminus \{ x_{n-2},x_{n-1},x_n \} ,$ $ G_1 = G\setminus \{x_n\} $ and $ G_2 = G\setminus \{ x_{n-2},x_{n-1},x_n \} ,$ i.e., $ G_1 $ and $ G_2 $ are the corresponding underlying graphs of $D_1 $ and  $ D_2 $ respectively.  Here $ I(D)=(x_0^{w_0}x_1,x_1x_2,\\x_2x_3^{w_3},\ldots,x_{n-3}^{w_{n-3}}x_{n-2},x_{n-2}x_{n-1},x_{n-1}x_n^{w_n}).$  Let $ J = (x_{n-1}x_n^{w_n}) $ and $ K=I(D_1). $ Since $ J $ has linear resolution, $ I(D) = J + K $ is a Betti splitting. Here $ \reg(J)= w_n + 1 .$  Since $ x_{n-1} \notin   V^+ $, $ D_1 $ is a weighted oriented path of length $ n-1 $ in $ T_2 .$ By  Corollary \ref{path}, $ \reg(I(G))= \reg(I(G_1)) .$ Thus by Theorem \ref{ClassII}, we have  $$ \reg(K)=\reg(I(D_1))= \reg(I(G_1)) + \displaystyle{ \sum_{x_i \in V^+ \setminus \{x_n\} }}(w_i - 1) = \reg(I(G)) + \displaystyle{ \sum_{x_i \in V^+ \setminus \{x_n\} }}(w_i - 1)  .$$ Note that $ J \cap K= JL $ where $ L= (I(D_2),x_{n-2}) $ and $ D_2 $ is a weighted oriented path of length $n-3=3(k-1)$ in $ T_1. $ Thus   by the induction hypothesis, we get   $$ \reg(L)=\reg(I(D_2))= \reg(I(G_2)) + \displaystyle{ \sum_{x_i \in V^+ \setminus \{ x_n,x_m \} }}(w_i - 1)  $$where $ x_m $  is one of vertices of $    V^+(D_2) $   with minimum weight.    By Corollary \ref{path},  $ \reg(I(G))  = \reg(I(G_2)) + 1 .$ Thus by Lemma \ref{reg.3}, we have    
  \begin{align*}
  \reg(J\cap K) &= \reg(J) + \reg(L)\\
  &=\reg(J)   + \reg(I(D_2))\\&=(w_n + 1) + \reg(I(G_2)) + \displaystyle{ \sum_{x_i \in V^+ \setminus \{ x_n,x_m \} }}(w_i - 1)\\&= \reg(I(G_2)) + 1 + \displaystyle{ \sum_{x_i \in V^+ \setminus \{ x_n,x_m \} }}(w_i - 1) + w_n\\&= \reg(I(G)) + \displaystyle{ \sum_{x_i \in V^+ \setminus \{ x_n,x_m \} }}(w_i - 1) + w_n
  \end{align*} 
    Therefore by Corollary \ref{betti.2}, we get 
  \begin{align*}
  \reg(I(D)) &= \max\{ \reg(J),~\reg(K),~ \reg(J\cap K) - 1\}\\&= \max\{ w_n + 1,~\reg(I(G)) + \displaystyle{ \sum_{x_i \in V^+ \setminus \{x_n \}}}(w_i - 1) ,~ \reg(I(G)) + \displaystyle{ \sum_{x_i \in V^+ \setminus \{ x_m \} }}(w_i - 1)  \} \\&=\reg(I(G)) + \displaystyle{ \sum_{x_i \in V^+  \setminus \{ x_j \}}}(w_i - 1)
  \end{align*}
  where $ x_j=\min\{x_n,x_m\},$ i.e., $ x_j $ is one of the  vertices of $ V^+ $ with minimum weight.\\
  Hence for any weighted oriented path $D$ of length $ n  $ in $ T_1 $, 
  $$\reg(I(D)) = \reg(I(G)) + \displaystyle{ \sum_{x_i \in V^+\setminus\{x_j\}}}(w_i - 1)$$  where $ w_i=w(x_i)  $ for $x_i \in V^+$ and $ x_j $ is one of  the vertices of $ V^+ $ with minimum weight.	   
  \end{proof}
\begin{theorem} \label{cycle1}
	Let $ D $ be a weighted oriented cycle of length $ n $ for $ n\equiv 0,1~(\mbox{mod}~3)$  with  underlying graph  $ G= C_n=x_1\ldots x_nx_1$    and  vertices of $ V^+ $ are sinks. Then $\reg(I(D)) = \reg(I(G)) + \displaystyle{ \sum_{x_i \in V^+}}(w_i - 1) $ where $ w_i=w(x_i)  $ for $x_i \in V^+.$   
\end{theorem}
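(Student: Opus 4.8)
The plan is to break the cycle into a path by a single Betti splitting at a vertex of $V^+$ and then invoke the path formula of Theorem \ref{ClassII} together with the cycle identity of Corollary \ref{cycle}. If $V^+ = \emptyset$ then all weights equal $1$, so $I(D) = I(G)$ and the statement is trivial; thus assume $V^+ \neq \emptyset$ and fix $x \in V^+$. As every vertex of $V^+$ is a sink, both edges of $G$ at $x$ are oriented into $x$, so if $y, z$ denote the two neighbours of $x$ then the generators of $I(D)$ divisible by $x$ are exactly $x^{w_x}y$ and $x^{w_x}z$. I would set $J = x^{w_x}(y,z)$ and $K = I(D \setminus x)$, so that $J$ contains precisely the generators divisible by $x$. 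Since $J$ is generated in the single degree $w_x + 1$ with $\reg(J) = w_x + 1$, it has a linear resolution, and Theorem \ref{betti.1} shows $I(D) = J + K$ is a Betti splitting; Corollary \ref{betti.2} then gives $\reg(I(D)) = \max\{\reg(J), \reg(K), \reg(J \cap K) - 1\}$.

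Next I would evaluate $K$. Deleting the sink $x$ leaves the weighted oriented path $D \setminus x$ on the other $n-1$ vertices, with endpoints $y$ and $z$. A neighbour of a sink cannot be a sink, so $y, z \notin V^+$; hence both endpoints have weight $1$ and $D \setminus x \in T_2$, and Theorem \ref{ClassII} yields $\reg(K) = \reg(I(G \setminus x)) + \sum_{x_i \in V^+ \setminus \{x\}}(w_i - 1)$. Because $n \equiv 0,1 \pmod 3$, Corollary \ref{cycle} gives $\reg(I(G \setminus x)) = \reg(I(G))$, and since $V^+(D \setminus x) = V^+ \setminus \{x\}$ this becomes $\reg(K) = \reg(I(G)) + \sum_{x_i \in V^+}(w_i - 1) - (w_x - 1)$. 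It therefore suffices to show $\reg(I(D)) = \reg(K) + (w_x - 1)$; as $\reg(J) = w_x + 1 \leq \reg(K) + w_x - 1$ (using $\reg(K) \geq 2$), the displayed maximum reduces this to the single identity $\reg(J \cap K) = \reg(K) + w_x$.

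The core of the proof is this identity. Since $x$ divides no generator of $K$, one checks $J \cap K = x^{w_x}\big((y,z) \cap K\big)$, and as $x$ lies outside the support of $(y,z) \cap K$, Lemma \ref{reg.3} gives $\reg(J \cap K) = w_x + \reg\big((y,z) \cap K\big)$; everything thus reduces to $\reg\big((y,z) \cap K\big) = \reg(K)$. For this I would use the exact sequence $0 \to (y,z) \cap K \to (y,z) \oplus K \to (y,z) + K \to 0$. Its middle term has regularity $\max\{1, \reg(K)\} = \reg(K)$, while $(y,z) + K = (y,z) + I(D \setminus \{x,y,z\})$, so Lemma \ref{reg.2} gives $\reg\big((y,z) + K\big) = \reg\big(I(D \setminus \{x,y,z\})\big)$. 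The path $D \setminus \{x,y,z\}$ arises from $D \setminus x$ by deleting both endpoints; comparing the path formula on the two graphs, using $\reg(I(P_{n-4})) = \reg(I(P_{n-2})) - 1$ for $n \equiv 0,1 \pmod 3$ and the fact that both carry the same $V^+$-weight sum (any $T_1$-correction, governed by Remark \ref{T}, only lowering the value), gives $\reg\big(I(D \setminus \{x,y,z\})\big) \leq \reg(K) - 1$. Feeding the regularities $\reg(\text{middle}) = \reg(K)$ and $\reg(\text{cokernel}) \leq \reg(K) - 1$ into Lemma \ref{reg.4} and splitting on its equality clause forces $\reg\big((y,z) \cap K\big) = \reg(K)$ from both sides, and substituting back finishes the argument.

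The step I expect to be the main obstacle is precisely this equality $\reg\big((y,z) \cap K\big) = \reg(K)$: the ideal $(y,z) \cap K$ is not itself an edge ideal, so its regularity must be pinned down indirectly through the exact sequence, and the estimate for $\reg\big(I(D \setminus \{x,y,z\})\big)$ needs the $T_1$-versus-$T_2$ bookkeeping of Remark \ref{T} alongside the exact cycle identity of Corollary \ref{cycle}. The small cases $n = 3, 4$, where $D \setminus \{x,y,z\}$ has no edges, I would verify directly; they fit the same pattern, since there $\reg\big((y,z) + K\big) = 1 = \reg(K) - 1$.
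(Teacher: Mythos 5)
Your proposal is correct, but it reaches the formula by a genuinely different mechanism than the paper. The paper's proof never uses Betti splitting for cycles: it applies Lemma \ref{reg.4} directly to the multiplication sequence $0 \to \bigl(R/(I(D):x_k^{w_k})\bigr)(-w_k) \to R/I(D) \to R/(I(D),x_k^{w_k}) \to 0$, identifying $(I(D),x_k^{w_k})$ with $(I(D\setminus x_k),x_k^{w_k})$ (a $T_2$ path, handled by Theorem \ref{ClassII} and Lemma \ref{reg.2}) and the colon ideal $(I(D):x_k^{w_k})$ with $(I(D\setminus N_D[x_k]),x_{k-1},x_{k+1})$ (a path in $T_1$ or $T_2$, bounded by Theorems \ref{ClassII} and \ref{ClassI}). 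You instead extend the paper's path technique to the cycle: a Betti splitting $I(D)=J+K$ with $J=x^{w_x}(y,z)$, $K=I(D\setminus x)$, followed by the factorization $J\cap K=x^{w_x}\bigl((y,z)\cap K\bigr)$ and an auxiliary sequence $0\to (y,z)\cap K\to (y,z)\oplus K\to (y,z)+K\to 0$. Note that your cokernel $(y,z)+K=(y,z)+I(D\setminus N_D[x])$ is exactly the paper's colon ideal, so the combinatorial inputs coincide — both proofs ultimately rest on the same two subgraphs $D\setminus x$ and $D\setminus N_D[x]$, the same path theorems, Corollary \ref{cycle}, and Lemma \ref{reg.4}. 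What the paper's route buys is brevity: one exact sequence and one application of Lemma \ref{reg.4}. What your route buys is that everything stays at the level of monomial ideals (no shifted quotient modules), and it showcases a two-sided use of Lemma \ref{reg.4} to pin down $\reg\bigl((y,z)\cap K\bigr)=\reg(K)$ exactly — a step the paper never needs, and which is the genuinely novel wrinkle in your argument; your verification there (lower bound from $\reg(B)\leq\max\{\reg(A),\reg(C)\}$, upper bound by contradiction via the equality clause) is sound. Two trivial slips that do not affect correctness: the $T_1$-versus-$T_2$ correction you invoke is governed by Theorem \ref{ClassI}, not Remark \ref{T}; and in the $n=4$ small case your claimed equality $\reg\bigl((y,z)+K\bigr)=\reg(K)-1$ fails when the vertex antipodal to $x$ has weight at least $2$ (then $\reg(K)-1=w_v\geq 2$), but only the inequality $\reg\bigl((y,z)+K\bigr)\leq\reg(K)-1$ is needed, and that holds.
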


\begin{proof}
Here $ V(D)=\{x_1,\ldots,x_n\} $. 
	Without loss of generality, let  $ x_k\neq x_1,x_n$ be one of the vertices of $ V^+ $.  
	  Let $ D_1 = D \setminus \{x_k\} ,$ $ D_2 = D \setminus N_D[x_k] ,$ $ G_1=G \setminus \{x_k\} $ and  $ G_2 = G \setminus N_G[x_k] ,$ i.e., $ G_1 $ and $ G_2 $ are the corresponding underlying graphs of $D_1 $ and  $ D_2 $ respectively.  
        By Corollary \ref{cycle}, $\reg(I(G))   =  \reg(I(G_1))=\reg(I(G_2)) + 1    $ except $n=3,4$ 
	and $\reg(I(G))   =  \reg(I(G_1))=2$ for $n=3,4$.\\       
Now consider the exact sequence
	\begin{equation}\label{2}
	0 \longrightarrow {\frac{R}{(I(D):x_k^{w_k}) }}(-w_k) \xrightarrow{.x_k^{w_k}} {\frac{R}{I(D) }} \longrightarrow {\frac{R}{(I(D),x_k^{w_k}) }} \longrightarrow 0
	\end{equation}		
	Here $(I(D),x_k^{w_k })= (I(D_1),x_k^{w_k })$ where $ D_1 = D \setminus \{x_k\} $ is a weighted oriented path of length $ n-2 $ with end vertices $x_{k-1}$ and $x_{k+1}$.  Since $  x_k \in   V^+  $ and vertices of $   V^+ $ are sinks,  $x_{k-1},x_{k+1} \notin  V^+.$ Hence $ D_1  \in  T_2 $. Thus by  Lemma \ref{reg.2} and Theorem \ref{ClassII}, we have
	 \begin{align*}
\reg((I(D),x_k^{w_k}))&=\reg(I(D_1))+ w_k - 1\\
&=\reg(I(G_1))   +  \displaystyle{ \sum_{x_i \in V^+ \setminus \{x_k\}}}(w_i - 1) + w_k -1 \\
\hspace*{6.2cm} &= \reg(I(G_1)) + \displaystyle{ \sum_{x_i \in V^+ }}(w_i - 1).
    \end{align*}
Here $(I(D):x_k^{w_k })	=(I(D_2),x_{k-1},x_{k+1})$ except $n=3,4$\\
\hspace*{0.08cm} and 
$(I(D):x_k^{w_k })=
(x_{k-1},x_{k+1})$ for $n=3,4.$\\	    
For $n \neq 3,4,$ since $ D_2  $ is a weighted oriented path of length $ n-4 $   in $ T_1 $  or  $ T_2 ,$ \\ by Theorem    \ref{ClassII} and Theorem \ref{ClassI}, we have
\begin{align*}
 \reg((I(D):x_k^{w_k })(-(w_k)))&=\reg(I(D_2)) + w_k\\
 &\leq \reg(I(G_2)) + \displaystyle{ \sum_{x_i \in V^+ \setminus \{x_k\} } }(w_i - 1) + w_k\\
 &=\reg(I(G_2)) + \displaystyle{ \sum_{x_i \in V^+ }}(w_i - 1) + 1\\
 &=\reg(I(G_1)) + \displaystyle{ \sum_{x_i \in V^+ }}(w_i - 1).
\end{align*}

 For $n=3,4,$ $\reg((I(D):x_k^{w_k  })(-w_k))= 1 + w_k\\
	\hspace*{6.1cm}=\reg(I(G_1)) + w_k - 1 \\
	\hspace*{6.1cm} \leq   \reg(I(G_1)) + \displaystyle{ \sum_{x_i \in V^+ }}(w_i - 1).$ 

  By Lemma \ref{reg.4} and exact sequence (\ref{2}), we get
	\hspace*{4cm} $$ \reg(I(D)) \leq \max\{\reg((I(D):x_k^{w_k  })(-w_k)),\reg((I(D),x_k^{w_k }))\} .$$
	Since $\reg((I(D):x_k^{w_k  })(-w_k)) - 1 \neq \reg((I(D),x_k^{w_k })),$ by Lemma  \ref{reg.4} and exact \\sequence (\ref{2})  we have
	\begin{align*}
   \reg(I(D)) &=\reg((I(D),x_k^{w_k}))\\
     &= \reg(I(G_1)) + \displaystyle{ \sum_{x_i \in V^+ }}(w_i - 1)\\
     &= \reg(I(G)) + \displaystyle{ \sum_{x_i \in V^+ }}(w_i - 1).
    \end{align*}

\end{proof}

\begin{theorem} \label{cycle2}
	Let $ D $ be a weighted oriented cycle of length $ n $ for $ n\equiv 2~(\mbox{mod}~3)$  with  underlying graph  $ G= C_n=x_1\ldots x_nx_1$    and  vertices of $ V^+ $ are sinks. Then $\reg(I(D)) = \reg(I(G)) + \displaystyle{ \sum_{x_i \in V^+}}(w_i - 1) $ where $ w_i=w(x_i)  $ for $x_i \in V^+.$ 
\end{theorem}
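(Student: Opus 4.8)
The plan is to induct on the total excess weight $t=\displaystyle{\sum_{x_i\in V^+}}(w_i-1)$, showing at each step that lowering one weight of a vertex in $V^+$ by one drops $\reg(I(D))$ by exactly one. If $t=0$ then $V^+=\emptyset$, so $D=G$ and there is nothing to prove. Assume $t\geq 1$, pick $x_k\in V^+$ (so $w_k\geq 2$), and relabel so that $x_k$ is an interior label of the cycle. Let $D^-$ be the weighted oriented cycle obtained from $D$ by replacing $w_k$ with $w_k-1$; this is again a cycle on $G$ with $n\equiv 2\ (\mathrm{mod}\ 3)$ whose $V^+$-vertices are sinks, and its excess weight is $t-1$, so by induction $\reg(I(D^-))=\reg(I(G))+t-1$. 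It then suffices to prove $\reg(I(D))=\reg(I(D^-))+1$.

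Since $x_k$ is a sink and no neighbour of a $V^+$-sink lies in $V^+$, the two generators of $I(D)$ divisible by $x_k$ are exactly $x_{k-1}x_k^{w_k}$ and $x_{k+1}x_k^{w_k}$, so I would split $I(D)=J+K$ with $J=x_k^{w_k}(x_{k-1},x_{k+1})$ and $K=I(D_1)$, where $D_1=D\setminus\{x_k\}$. As $J$ is a monomial times $(x_{k-1},x_{k+1})$ it has a linear resolution, so Theorem \ref{betti.1} makes this a Betti splitting; the analogous splitting $I(D^-)=J'+K$ with $J'=x_k^{w_k-1}(x_{k-1},x_{k+1})$ has the same $K$ because $x_k\notin\Supp(K)$. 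Writing $Q=(x_{k-1},x_{k+1})\cap I(D_1)$ and using that $x_k$ is coprime to $K$, one gets $J\cap K=x_k^{w_k}Q$ and $J'\cap K=x_k^{w_k-1}Q$, so by Lemma \ref{reg.3}, $\reg(J\cap K)=w_k+\reg(Q)$ and $\reg(J'\cap K)=(w_k-1)+\reg(Q)$. Hence by Corollary \ref{betti.2},
$$\reg(I(D))=\max\{\,w_k+1,\ \reg(K),\ w_k+\reg(Q)-1\,\},\qquad \reg(I(D^-))=\max\{\,w_k,\ \reg(K),\ w_k+\reg(Q)-2\,\}.$$
The crucial point is that the first and third arguments each rise by exactly $1$ from $D^-$ to $D$, while $\reg(K)$ is unchanged, so $\reg(Q)$ itself never has to be evaluated.

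To finish I would check that $\reg(K)$ is dominated. Since $D_1$ is a path of length $n-2$ whose endpoints $x_{k\pm1}\notin V^+$, it lies in $T_2$, and $V^+(D_1)=V^+\setminus\{x_k\}$ with weights preserved; so Theorem \ref{ClassII} together with Corollary \ref{cycle}(b) gives $\reg(K)=\reg(I(D_1))=\reg(I(G_1))+\bigl(t-(w_k-1)\bigr)=\reg(I(G))+t-w_k$, which is strictly less than $\reg(I(G))+t-1=\reg(I(D^-))$ because $w_k\geq 2$. Thus the maximum for $D^-$ is attained at its first or third argument, and raising both by one yields $\reg(I(D))=\reg(I(D^-))+1=\reg(I(G))+t$, closing the induction.

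The main obstacle, and the reason for the weight induction, is that the direct route used for Theorem \ref{cycle1} — the short exact sequence associated with $x_k^{w_k}$ — does \emph{not} close here: for $n\equiv 2\ (\mathrm{mod}\ 3)$ Corollary \ref{cycle}(b) forces $\reg(I(G_1))=\reg(I(G))-1$, which makes the colon term and the quotient term of that sequence differ by exactly one in the way that violates the equality hypothesis of Lemma \ref{reg.4}, so that sequence only delivers the upper bound $\reg(I(D))\leq \reg(I(G))+t$. The Betti splitting replaces Lemma \ref{reg.4} by the exact formula of Corollary \ref{betti.2}, and the induction on $t$ is what lets the genuinely awkward quantity $\reg\bigl((x_{k-1},x_{k+1})\cap I(D_1)\bigr)$ cancel between $D$ and $D^-$ instead of being computed directly.
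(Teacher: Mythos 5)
Your proof is correct, but it takes a genuinely different route from the paper's. The paper inducts on the \emph{number} of vertices of $V^+$ and uses the short exact sequence $0 \rightarrow R/(I(D):x_k^{w_k-1})(-(w_k-1)) \rightarrow R/I(D) \rightarrow R/(I(D),x_k^{w_k-1}) \rightarrow 0$: the colon ideal $(I(D):x_k^{w_k-1})$ is exactly $I(D_3)$ for the cycle $D_3$ obtained by resetting $w_k$ to $1$, so the induction hypothesis evaluates it; the quotient term reduces to a path in $T_2$ handled by Theorem \ref{ClassII} and Lemma \ref{reg.2}; and the equality case of Lemma \ref{reg.4} applies because the two terms differ by two. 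So the paper also lowers the weight at $x_k$, but all the way to $1$ in a single step, packaging the comparison through the exact sequence. You instead induct on the total excess weight $t$, lower $w_k$ by one, and run Corollary \ref{betti.2} twice --- once for $D$ and once for $D^-$ --- so that the intractable term $\reg\bigl((x_{k-1},x_{k+1})\cap I(D_1)\bigr)$ cancels between the two splittings instead of being computed; the only quantitative input is $\reg(K)=\reg(I(G))+t-w_k<\reg(I(D^-))$, obtained from Theorem \ref{ClassII} and Corollary \ref{cycle}(b), which are the same external inputs the paper uses. What the paper's route buys is brevity: both ends of its sequence are computed outright and one application of Lemma \ref{reg.4} finishes. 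What your route buys is robustness: you never invoke the equality criterion of Lemma \ref{reg.4} (whose failure for the naive $x_k^{w_k}$ sequence you correctly identify as the obstruction --- this is precisely why the paper switches from the $x_k^{w_k}$ sequence of Theorem \ref{cycle1} to the $x_k^{w_k-1}$ sequence here), nor do you ever need the value of an intersection ideal's regularity. The implicit steps you rely on are all sound: $J\cap K=x_k^{w_k}Q$ because $x_k\notin\Supp(K)$, Lemma \ref{reg.3} applies since $\Supp\bigl((x_k^{w_k})\bigr)\cap\Supp(Q)=\emptyset$, the splitting is Betti by Theorem \ref{betti.1} since $J$ collects all generators divisible by $x_k$ and has a linear resolution, and $x_{k\pm 1}\notin V^+$ because vertices of $V^+$ are sinks, which puts $D_1$ in $T_2$.
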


\begin{proof}
Here $ V(D)=\{x_1,\ldots,x_n\} $. We use the method of induction on the number of  vertices of   $  V^+. $\\   
 Base Case: Assume that $   V^+$ contains no  vertex. Then the proof follows trivially.\\
 Now we consider the case when $   V^+ $  contains $ m $ number of  vertices for some $ m\geq 1. $  Without loss of generality, let  $ x_k \neq x_1,x_n $ be one of the $ m $  vertices of $ V^+ .$ 
Let $ D_1 = D \setminus \{x_k\} $ and  $ G_1=G \setminus \{x_k\} ,$      i.e., $ G_1 $ is  the corresponding underlying graph of $D_1 $.      
By Corollary \ref{cycle}, we have $ \reg(I(G)) = \reg(I(G_1)) + 1.$ 	
	Consider the exact sequence  
	\begin{equation}\label{3}
	0 \longrightarrow {\frac{R}{(I(D):x_k^{w_k - 1}) }}(-(w_k-1)) \xrightarrow{.x_k^{w_k - 1}} {\frac{R}{I(D) }} \longrightarrow {\frac{R}{(I(D),x_k^{w_k - 1}) }} \longrightarrow 0
	\end{equation}
	Here $(I(D),x_k^{w_k - 1})= (I(D_1),x_k^{w_k - 1})$ where  $ D_1 = D \setminus \{x_k\} $ is a weighted oriented path of length $ n-2 $ with end vertices $x_{k-1}$ and $x_{k+1}$.  Since $  x_k \in   V^+  $ and vertices of $   V^+ $ are sinks, $x_{k-1},x_{k+1} \notin  V^+.$ Hence $ D_1  \in  T_2 $. Then by Theorem \ref{ClassII}, we have $   \reg(I(D_1))=  \reg(I(G_1)) +  \displaystyle{ \sum_{x_i \in V^+ \setminus \{x_k\}}}(w_i - 1) .$
	 Thus by  Lemma \ref{reg.2}, we have  
	 \begin{align*}
\reg((I(D),x_k^{w_k - 1}))&= \reg(I(D_1))+ (w_k -1) -1\\
&=  \reg(I(G_1))   +  \displaystyle{ \sum_{x_i \in V^+ }}(w_i - 1) - 1\\  
	&=  \reg(I(G)) +  \displaystyle{ \sum_{x_i \in V^+ }}(w_i - 1) - 2.
 \end{align*}

 Here $(I(D):x_k^{w_k - 1})=I(D_3)$ where $ D_3 $ is a weighted oriented cycle with      $m-1$  vertices in $V^+(D_3)$. Thus by using the induction hypothesis,$$ \reg(I(D_3))=  \reg(I(G)) +  \displaystyle{ \sum_{x_i \in V^+ \setminus \{x_k\}}}(w_i - 1) .$$Then   $\reg((I(D):x_k^{w_k - 1})(-(w_k-1)))=\reg(I(D_3)) + w_k - 1 =\reg(I(G)) +  \displaystyle{ \sum_{x_i \in V^+ }}(w_i - 1).$ By Lemma \ref{reg.4} and exact sequence (\ref{3}), we have
	$$ \reg(I(D)) \leq \max\{\reg((I(D):x_k^{w_k - 1})(-(w_k-1))),\reg((I(D),x_k^{w_k - 1}))\} .$$  
 Since $\reg((I(D):x_k^{w_k - 1})(-(w_k-1))) - 1  \neq  \reg((I(D),x_k^{w_k - 1})),$ by Lemma \ref{reg.4} and exact sequence (\ref{3}), we get 
 $$ \reg(I(D))= \reg((I(D):x_k^{w_k - 1})(-(w_k-1)))=\reg(I(G)) +  \displaystyle{ \sum_{x_i \in V^+ }}(w_i - 1) . $$  	Hence  for any weighted oriented cycle $ D $ of length $ n $ where $ n\equiv 2~(\mbox{mod}~3)$  with  vertices of $ V^+ $ are sinks,  $\reg(I(D)) = \reg(I(G)) + \displaystyle{ \sum_{x_i \in V^+}}(w_i - 1) $ where $ w_i=w(x_i)  $ for $x_i \in V^+.$ 
		
\end{proof}
By the computations in Macaulay 2, we have seen that it is even a hard job to relate the     regularity  of the edge ideal of a weighted oriented tree with the regularity of edge ideal of its underlying graph when the vertices of $ V^+ $  are sink.


\begin{thebibliography}{AAAA}
	
\bibitem{ali-2015}
{A. Alilooee and S. Faridi}, {On the resolution of path ideals of cycles}, \textit{Comm. Algebra}, {\bf43}(12) (2015), 5413–5433.     

\bibitem{ali-2018}
{A. Alilooee and S. Faridi}, {Graded Betti numbers of path ideals of cycles and lines}, \textit{J. Algebra Appl.}, {\bf17}(1) (2018), 1850011-1-17.  

\bibitem{beyarslan1}
{S. Beyarslan, T. Huy, T. Trung and N. Nam}, {Regularity of powers of forests and cycles}, \textit{J. Algebraic Combin.}, {\bf42}(4) (2015), 1077–1095.
\bibitem{beyarslan}
S. K. Beyarslan, J. Biermann, K-N. Lin and A. O'Keefe,
{Algebraic invariants of weighted oriented graphs}, arXiv:1910.11773.


\bibitem{bouchat}
{R. R. Bouchat, H. T. H{\`a} and A. O’Keefe}, {Path ideals of rooted trees and their graded Betti
	numbers}, \textit{J. Comb. Theory Ser. A}, {\bf118}(8) (2011), 2411–2425.  


	
	

\bibitem{elia}
{S. Eliahou and M. Kervaire}, {Minimal resolutions of some monomial ideals}, \textit{J. Algebra}, {\bf129}(1)
(1990), 1-25.
	
\bibitem{fatabbi}
{G. Fatabbi}, {On the resolution of ideals of fat points}, \textit{J. Algebra}, {\bf242}(1) (2001),  92-108.




	
\bibitem{francisco}
{C. A. Francisco, H. T. H{\`a} and A. Van Tuyl}, {Splittings of monomial ideals}, \textit{Proc. Amer. Math.  
Soc.},   {\bf137}(10) (2009), 3271–3282.

\bibitem{gimenez}
{P. Gimenez, J. M. Bernal, A. Simis, R. H. Villarreal and C. E. Vivares}, { Symbolic powers of monomial ideals and Cohen-Macaulay vertex-weighted digraphs},  \textit{Singularities, algebraic geometry, commutative algebra, and related topics}, Springer, Cham, (2018), 491-510.
	
\bibitem{nv-838}
{H. T. H{\`a}, N. V. Trung and T. N. Trung}, {Depth and regularity of powers of sums of ideals},
\textit{Math. Z.}, {\bf282} (3-4) (2016), 819-838.
	
	
\bibitem{htha-2018}
{H. T. H{\`a}, K-N Lin, S. Morey, E. Reyes and R. H. Villarreal}, {Edge ideals of oriented graphs}, \textit{Internat. J. Algebra Comput.}, 
{\bf29}(3) (2019), 535-559.  
	
\bibitem{htha}
{H.T. H{\`a}}, {Regularity of squarefree monomial ideals}, \textit{Connections Between Algebra, Combinatorics and Geometry}, Springer Proc. Math. Stat.,   {\bf76} (2014), 251-276.  	
	
\bibitem{herzog}
{J. Herzog and T. Hibi}, \textit{Monomial Ideals}, New York, NY, USA: Springer-Verlag, 2011.

\bibitem{jacques}

{S. Jacques}, {Betti numbers of graph ideals}, PhD dissertation, University of Sheffield, 2004.	

\bibitem{kiani}
{D. Kiani and S. S. Madani}, {Betti numbers of path ideals of trees}, \textit{Comm. Algebra}, {\bf44}(12) (2016), 5376–5394.  

\bibitem{McCullough}
{K-N. Lin and J. McCullough,  Hypergraphs and regularity of square-free monomial ideals}, \textit{Internat. J. Algebra
Comput.}, {\bf23}(7) (2013), 1573-1590.  
    
\bibitem{bernal}
{J. Mart\'{i}nez-Bernal, Y. Pitones and R. H. Villarreal}, {Minimum distance functions of graded
ideals and Reed-Muller-type codes}, \textit{J. Pure Appl. Algebra}, {\bf221}(2) (2017), 251-275.


	
	
    
\bibitem{pitones}
{Y. Pitones, E. Reyes and J. Toledo}, {Monomial ideals of weighted oriented graphs}, \textit{Electron. J. Combin.}, {\bf26}(3) (2019), 1-18.	




	
\bibitem{zhu-1}
{G. Zhu}, {Projective dimension and regularity of the path ideal of the line graph}, \textit{J. Algebra Appl.}, {\bf17}(4) (2018), 1850068-1-15.  

\bibitem{zhu-2}
{G. Zhu, L. Xu, H. Wang and Z. Tang}, {Projective dimension and regularity of edge ideal of some weighted oriented graphs}, \textit{Rocky MT J. Math.}, {\bf49}(4) (2019), 1391-1406. 

\bibitem{zhu-3}
{G. Zhu, L. Xu, H. Wang and Z. Tang}, {Projective dimension and regularity of edge ideals of some vertex-weighted oriented $ m- $partite graphs}, arXiv:1904.04682.



.

   	
	
\end{thebibliography}
\end{document}